\numberwithin{equation}{section}
\theoremstyle{plain}
\newtheorem{theorem}{\bf Theorem}[section]
\newtheorem{lemma}[theorem]{\bf Lemma}
\newtheorem{corollary}[theorem]{\bf Corollary}
\newtheorem{conjecture}[theorem]{\bf Conjecture}
\theoremstyle{definition}
\newtheorem{definition}[theorem]{\bf Definition}
\newtheorem{remark}[theorem]{\bf Remark}
\newtheorem{proposition}[theorem]{\bf Proposition}
\newcommand{\mm}[1]{\mathrm{#1}}
\newcommand{\mb}[1]{\mathbb{#1}}
\newcommand{\mc}[1]{\mathcal{#1}}
\begin{document}
\title[PL cobordism and classification of PL manifolds]{PL cobordism and classification of PL manifolds}

\author[Wen Shen]{Wen Shen}
\address{Department of Mathematics, Capital Normal University,
Beijing, P.R.China }
\email{shenwen121212@163.com}

\begin{abstract}
In this paper, we present more conclusions for $C^\ast(\mathrm{PL})$ and oriented PL cobordism. Furthermore, we compute the string PL bordism groups of certain dimensions. Finally, we apply these results to classify certain PL manifolds of dimensions $10$ and $13$. 	
\end{abstract}

\subjclass[2020]{Primary 57Q20, 57Q25, 55T15}

\maketitle
\tableofcontents

\section{Introduction}
Since Kervaire \cite{Kerva1960} gave an invariant of $(4k + 2)$-manifolds for $k \ne  0, 1, 3$, the invariant, called Kervaire invariant, has been of interest.  In particular, the existence of framed manifolds of Kervaire invariant one is one of the oldest issues in differential and algebraic topology, and has been solved completely by \cite{BJM} \cite{BrowderW1969} \cite{HHR2016} \cite{Jones} \cite{MaTa} \cite{May} \cite{Milg}, and the recent work of Lin-Wang-Xu.


From \cite{Kerva1960}, the Kervaire invariant can be defined for any $4$-connected closed manifold of dimension $10$, and we see that the Kervaire invariant of any smooth $4$-connected closed manifold of dimension $10$ is $0$. Naturally, we will ask whether a $4$-connected closed manifold of dimension $10$ admits a smooth structure if its Kervaire invariant equals to $0$.
 Kervaire reduced this question to the following conjecture.
\begin{conjecture}
	The $5$-th Betti number $B_5(M)$ and Kervaire invariant $\Phi(M)\in \mb{Z}_2$ are a complete set of invariants of the homotopy type of the  $4$-connected closed manifold $M$ of dimension $10$.
\end{conjecture}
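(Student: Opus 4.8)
The plan is to reduce the homotopy classification to a purely algebraic one by building $M$ as an explicit CW complex with a single top cell and analyzing its attaching map. First I would extract the homology from the hypotheses: $4$-connectedness gives $H_i(M)=0$ for $1\le i\le 4$ by the Hurewicz theorem, and since a simply connected closed $10$-manifold is orientable, Poincaré duality $H_i\cong H^{10-i}$ together with the universal coefficient theorem forces $H_6=\cdots=H_9=0$ and shows $H_5$ is torsion-free. Thus $H_0=H_{10}=\mb{Z}$, $H_5\cong\mb{Z}^{B_5}$, and all other homology vanishes, so up to homotopy
\[
M\simeq\Big(\bigvee_{i=1}^{B_5}S^5\Big)\cup_f e^{10},\qquad f\in\pi_9\Big(\bigvee_{i=1}^{B_5}S^5\Big).
\]
Only Poincaré duality is used here, so the argument applies at the level of Poincaré complexes. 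The entire homotopy type is then encoded in the single class $f$, taken up to the action of the group of homotopy self-equivalences of the wedge.

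Next I would decompose $f$ using the Hilton--Milnor theorem. In the relevant range only basic products of weight $\le 2$ contribute, so
\[
\pi_9\Big(\bigvee_{i=1}^{B_5}S^5\Big)\;\cong\;\bigoplus_{i}\pi_9(S^5)\ \oplus\ \bigoplus_{i<j}\pi_9(S^9),
\]
with $\pi_9(S^5)\cong\mb{Z}_2$ and $\pi_9(S^9)\cong\mb{Z}$. The weight-$2$ (Whitehead-product) coordinates in $\bigoplus_{i<j}\mb{Z}$ record the cup-product pairing $H^5\times H^5\to H^{10}$; Poincaré duality makes this pairing unimodular, and since $5$ is odd it is skew-symmetric, hence equivalent to the standard symplectic form. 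This forces $B_5=2g$ to be even, and after applying a self-equivalence realizing a symplectic change of basis in $GL_{B_5}(\mb{Z})$ I would normalize the weight-$2$ part of $f$ to the standard form.

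The residual data is then the collection of self-components $(q_1,\dots,q_{B_5})\in\prod_i\pi_9(S^5)=(\mb{Z}_2)^{B_5}$, which I would assemble into a quadratic refinement $q\colon H_5(M;\mb{Z}_2)\to\mb{Z}_2$ of the mod-$2$ intersection form by taking these as the values on a basis; the defining identity $q(x+y)=q(x)+q(y)+x\cdot y$ reflects how self-components combine under addition of generators, the cross-term arising from the self-Whitehead product $[\iota_5,\iota_5]$, which generates $\pi_9(S^5)$, and from composition with $\eta$. I would then check that $\mathrm{Arf}(q)$ coincides with the Kervaire invariant $\Phi(M)$. To finish, I would invoke the classical fact that $Sp(2g,\mb{Z}_2)$ acts on the quadratic refinements of a fixed symplectic form with exactly two orbits, distinguished by the Arf invariant, and verify that every such transformation is induced by a homotopy self-equivalence of $\bigvee^{2g}S^5$ fixing the already-normalized Whitehead part. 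Together with the rank $B_5$, the pair $(B_5,\Phi(M))$ would then determine $f$ up to self-equivalence, hence the homotopy type of $M$.

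The hard part, I expect, is the bookkeeping of the self-equivalence action in the last step: one must track how a basis change mixing the generators $\iota_i$ and $\iota_j$ propagates through the Hilton decomposition, since the self-components transform \emph{quadratically} rather than linearly. Verifying that these homotopy-theoretic cross-terms exactly reproduce the algebraic transformation law $q\mapsto q\circ A^{-1}$ of a quadratic form under $A\in Sp(2g,\mb{Z}_2)$, and that no further $k$-invariant obstruction intervenes between the $5$-skeleton and the top cell, is the crux on which the whole reduction rests.
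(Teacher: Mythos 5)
Your proposal is essentially correct, but it reaches the conjecture by a route entirely different from the paper's. What you describe is Wall's homotopy-theoretic classification of $(n-1)$-connected $2n$-dimensional Poincar\'e complexes: split off the top cell, decompose the attaching map by Hilton--Milnor into a Whitehead (intersection-form) part plus $\pi_9(S^5)=\mathbb{Z}_2$ components, and invoke the two-orbit theorem for quadratic refinements under the symplectic group. The paper never argues at the level of attaching maps; it proves the stronger homeomorphism statement (Theorem \ref{classify}) by showing that every $4$-connected closed $10$-manifold carries a unique PL structure and a unique $\mathrm{BPL}\langle 8\rangle$-structure, computing $\Omega_{10}^{\mathrm{PL}\langle 8\rangle}=\mathbb{Z}_2$ via the Adams spectral sequence for $\mathrm{MPL}\langle 8\rangle$ and the structure of $C^\ast(\mathrm{PL})$, showing the Kervaire invariant detects the nontrivial bordism class, and then converting a normal $\mathrm{BPL}\langle 8\rangle$-bordism into a PL homeomorphism via the surgery theorem \ref{PLhome}. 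Your route is more elementary and self-contained for the homotopy statement and applies to arbitrary Poincar\'e complexes, but it cannot yield the homeomorphism (or PL) classification that is the paper's main point. Two remarks on your flagged difficulties: the one substantive unverified step is the identification $\mathrm{Arf}(q)=\Phi(M)$, since Kervaire defines $\Phi$ by functional Steenrod operations and matching his quadratic function with the attaching-map components is a genuine (if classical) computation; on the other hand, the bookkeeping you worry about is simpler than you fear, because the James--Hopf invariant $H\colon\pi_9(S^5)\to\pi_9(S^9)$ vanishes (by the EHP sequence, as $[\iota_4,\iota_4]$ has infinite order), so left distributivity contributes no cross-terms from the $\pi_9(S^5)$ components themselves --- the entire quadratic deviation comes from the diagonal terms $a_{ik}a_{jk}[\iota_k,\iota_k]$ created when a basis change hits the Whitehead part, together with the fact that $[\iota_5,\iota_5]$ generates $\pi_9(S^5)$; the appeal to ``composition with $\eta$'' is not where the quadratic law comes from.
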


In this paper, we prove the conjecture of Kervaire as follows. 
\begin{theorem}\label{classify}
	The $5$-th Betti number $B_5(M)$ and Kervaire invariant $\Phi(M)\in \mb{Z}_2$ are a complete set of invariants of the homeomorphism type of the  $4$-connected closed manifold $M$ of dimension $10$.
\end{theorem}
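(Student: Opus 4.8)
The plan is to argue in two stages: first I would re-prove Kervaire's conjecture at the level of homotopy type, and then upgrade homotopy equivalence to homeomorphism by simply-connected surgery.

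\smallskip
\noindent\textbf{Homotopy type.} Since $M$ is $4$-connected it is orientable, and Poincaré duality together with the universal coefficient theorem forces $H_\ast(M)$ to be free and concentrated in degrees $0,5,10$ with $H_5(M)\cong\mb{Z}^{B_5}$. Hence $M$ is homotopy equivalent to a complex $X=\bigl(\bigvee_{B_5}S^5\bigr)\cup_f e^{10}$ with attaching map $f\in\pi_9\bigl(\bigvee_{B_5}S^5\bigr)$. Using $\pi_9(S^5)\cong\mb{Z}_2$, generated by the Whitehead square $[\iota_5,\iota_5]$ (which suspends to $0$, consistent with $\pi_4^s=0$), the Hilton--Milnor theorem gives
\[
\pi_9\Bigl(\textstyle\bigvee_{B_5}S^5\Bigr)\cong(\mb{Z}_2)^{B_5}\oplus\mb{Z}^{\binom{B_5}{2}},
\]
where the free part is spanned by the products $[\iota_i,\iota_j]$ ($i<j$) and each $\mb{Z}_2$ by a self-product $[\iota_i,\iota_i]$ (weight-$3$ products already lie in dimension $13>9$). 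Writing $f=\sum_{i<j}\lambda_{ij}[\iota_i,\iota_j]+\sum_i q_i[\iota_i,\iota_i]$, the matrix $(\lambda_{ij})$ is the middle intersection form and the $q_i\in\mb{Z}_2$ are the self-intersection data. Poincaré duality makes $(\lambda_{ij})$ unimodular, and since $5$ is odd it is skew-symmetric, so $B_5=2g$ is even and the form is the standard symplectic one; the $q_i$ then define a quadratic refinement $q\colon H_5(M;\mb{Z}_2)\to\mb{Z}_2$ of the mod-$2$ intersection pairing whose Arf invariant is $\Phi(M)$.

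\smallskip
\noindent\textbf{Orbit count.} I would then classify these complexes up to homotopy by acting on $f$ with the self-equivalences of $\bigvee_{2g}S^5$, which realize all of $GL_{2g}(\mb{Z})$ on $H_5$. After normalizing $(\lambda_{ij})$ to the standard symplectic form, the residual freedom is $Sp_{2g}(\mb{Z})$ acting on the refinements $q$. Since $Sp_{2g}(\mb{Z})\twoheadrightarrow Sp_{2g}(\mb{F}_2)$ and, by Arf's theorem, $Sp_{2g}(\mb{F}_2)$ acts transitively on quadratic refinements of a fixed Arf invariant, the orbit of $f$---hence the homotopy type of $M$---is determined by $(g,\Phi)=(B_5/2,\Phi)$. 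This establishes the Kervaire conjecture.

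\smallskip
\noindent\textbf{Surgery.} As $M$ is simply connected of dimension $10\ge5$, I would feed this into the topological surgery exact sequence
\[
L_{11}(\mb{Z})\longrightarrow\mc{S}^{\mm{TOP}}(M)\longrightarrow[M,G/\mm{TOP}]\xrightarrow{\ \theta\ }L_{10}(\mb{Z}),
\]
with $L_{11}(\mb{Z})=0$ and $L_{10}(\mb{Z})\cong\mb{Z}_2$. By the $2$-local splitting of $G/\mm{TOP}$ and the vanishing of $H^2(M),H^4(M),H^6(M),H^8(M)$ (all cohomology sits in degrees $0,5,10$), the only surviving normal-invariant class is the top Kervaire class, so $[M,G/\mm{TOP}]\cong H^{10}(M;\mb{Z}_2)\cong\mb{Z}_2$. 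The obstruction map $\theta$ carries its generator to $\langle\kappa,[M]_2\rangle\neq0$ and is therefore an isomorphism; exactness forces $\mc{S}^{\mm{TOP}}(M)$ to be a single point, i.e.\ every manifold homotopy equivalent to $M$ is homeomorphic to it. (The identical count with $G/\mm{PL}$ yields the PL statement, since $H^4(M)=0$ kills the only relevant difference between $G/\mm{PL}$ and $G/\mm{TOP}$.)

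\smallskip
\noindent\textbf{Conclusion and main difficulty.} Combining the two stages, two $4$-connected closed $10$-manifolds with equal $B_5$ and $\Phi$ are homotopy equivalent and hence homeomorphic, while $B_5$ and $\Phi$ are plainly homeomorphism invariants; this is the theorem. I expect the crux to be not the surgery step, which is essentially formal here, but the precise homotopy-theoretic identification of the geometrically defined Kervaire invariant $\Phi(M)$ with the Arf invariant of $q$ (equivalently a functional $Sq^2$), so that $\Phi$ is a genuine homotopy invariant entering both the orbit count and the evaluation of $\theta$. This is exactly where the earlier results on $C^\ast(\mm{PL})$ and on string PL bordism are needed to guarantee well-definedness and the correct normalization.
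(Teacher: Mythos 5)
Your proposal is correct in outline, but it takes a genuinely different route from the paper. The paper never passes through the homotopy classification at all: it shows that every $4$-connected closed $10$-manifold carries a unique PL structure and a unique normal $\mm{BPL}\langle 8\rangle$-structure, computes $\Omega_{10}^{\mm{PL}\langle 8\rangle}=\mb{Z}_2$ (the technical heart of the paper, via the structure of $C^\ast(\mm{PL})$ and Adams-type spectral sequences), shows that the normal bordism class is detected exactly by $\Phi$, and then invokes Theorem \ref{PLhome} (a PL version of Freedman/Kreck modified surgery): two normally bordant $5$-connected normal maps into $\mm{BPL}\langle 8\rangle$ with equal $B_5$ have PL-homeomorphic sources. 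You instead prove Kervaire's homotopy-type conjecture directly (Hilton--Milnor analysis of the attaching map, $\pi_9(S^5)=\mb{Z}_2$ generated by $[\iota_5,\iota_5]$, Arf's theorem on quadratic refinements) and then kill the structure set via the surgery exact sequence, using $[M,\mm{G/TOP}]\cong H^{10}(M;\mb{Z}_2)$ and the isomorphism $\theta$. Both arguments are sound; yours is more elementary and self-contained for this single theorem and makes the homotopy-type statement explicit, while the paper's route produces $\Omega_{10}^{\mm{PL}\langle 8\rangle}$ and the related bordism groups as reusable output (they also drive the $13$-dimensional classification and the non-smoothability corollaries). One correction to your closing remark: the identification of $\Phi(M)$ with the Arf invariant of $q$ does not depend on the $C^\ast(\mm{PL})$ or string PL bordism results --- it is pure homotopy theory, and in fact you can sidestep it entirely, since your orbit count yields exactly two homotopy types for each $B_5=2g>0$ and Kervaire already proved that $\Phi$ is a homotopy invariant taking distinct values on the connected sum of $g$ copies of $S^5\times S^5$ and on $M_0$ summed with $g-1$ such copies. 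In the paper those bordism computations serve a different purpose, namely showing that the $\mm{BPL}\langle 8\rangle$-bordism class is classified by $\Phi$, which is the input to the modified-surgery step rather than to any normalization of $\Phi$.
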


Indeed, any $4$-connected closed manifold of dimension $10$ admits a unique piecewise linear (PL) structure. So these invariants in  Theorem \ref{classify} also determine the PL homeomorphism types of the $4$-connected closed manifolds of dimension $10$. 

Consider all $4$-connected closed manifolds of dimension $10$ with the fixed $B_5=2s>0$.
 By observing that $\Phi$ is additive with respect to the connected sum of manifolds, one of the homeomorphism types is represented by the connected sum of $s$ copies of $ S^5\times S^5$, the other type is the connected sum of $M_0$ and $s-1$ copies of $ S^5\times S^5$ where the manifold $M_0$ was constructed by Kervaire in \cite{Kerva1960}.
Then, we have
\begin{corollary}
	 The Kervaire invariant of a $4$-connected closed manifold of dimension $10$ is $0$ iff it admits a smooth structure.
\end{corollary}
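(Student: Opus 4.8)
The plan is to derive the corollary directly from Theorem~\ref{classify} together with the additivity of $\Phi$ and the two model manifolds singled out in the preceding discussion; no new geometric input should be needed beyond these. The backward implication is immediate from the introduction: following Kervaire, every smooth $4$-connected closed manifold of dimension $10$ has vanishing Kervaire invariant, so if $M$ admits a smooth structure then $\Phi(M)=0$.

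For the forward implication I would argue by cases on $B_5(M)$. Since $M$ is $4$-connected and $10$-dimensional, Poincar\'e duality together with the universal coefficient theorem forces $H_5(M;\mathbb{Z})$ to be free, and its intersection form is a nondegenerate alternating unimodular form, hence of even rank; thus $B_5(M)=2s$. If $s=0$, then $H_5(M)=0$, so $M$ is a homotopy $10$-sphere and $\Phi(M)=0$ automatically (the Arf invariant of the zero form vanishes). By Theorem~\ref{classify} the pair $(B_5,\Phi)=(0,0)$ determines a single homeomorphism type, represented by $S^{10}$, which is smooth; transporting its smooth structure across the homeomorphism equips $M$ with a smooth structure.

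If $s>0$ and $\Phi(M)=0$, then by Theorem~\ref{classify} the invariants $(B_5,\Phi)=(2s,0)$ determine the homeomorphism type of $M$ uniquely. The discussion preceding the corollary exhibits $\#_s(S^5\times S^5)$ as a representative of this type: each factor $S^5\times S^5$ is smooth with hyperbolic intersection form and quadratic refinement vanishing on the evident pair of embedded spheres with trivial normal bundle, so $\Phi(S^5\times S^5)=0$, and by additivity of $\Phi$ under connected sum we get $\Phi\bigl(\#_s(S^5\times S^5)\bigr)=0$. Hence $M$ is homeomorphic to the smooth manifold $\#_s(S^5\times S^5)$, and pulling back its smooth structure along the homeomorphism makes $M$ smooth.

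The one step requiring genuine care---and the place where all the real content sits---is the matching of $\Phi$-values with the two homeomorphism types for fixed $B_5=2s>0$: one must know that these two types are actually separated by $\Phi$, so that $\Phi(M)=0$ forces the \emph{smooth} model rather than the Kervaire model, and correspondingly that Kervaire's manifold $M_0$ realizes the complementary value $\Phi=1$. Granting Theorem~\ref{classify} and these model computations, the corollary follows formally, using only that ``admits a smooth structure'' is preserved by transport of structure across a homeomorphism to a smooth manifold.
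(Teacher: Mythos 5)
Your proposal is correct and follows essentially the same route as the paper: the paper likewise deduces the corollary from Theorem~\ref{classify} by noting that for fixed $B_5=2s>0$ the two homeomorphism types are represented by $\#_s(S^5\times S^5)$ (smooth, $\Phi=0$) and by the connected sum of Kervaire's $M_0$ with $s-1$ copies of $S^5\times S^5$ ($\Phi=1$), using additivity of $\Phi$ and Kervaire's result that smooth models have $\Phi=0$. Your explicit treatment of the $s=0$ case and of transport of smooth structures across a homeomorphism only fills in details the paper leaves implicit.
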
  


Classification of the manifolds with a given cohomology ring (up to diffeomorphism, homeomorphism, or homotopy) is one of the central problems in geometric topology. The Poincar$\acute{e}$ theorem is a special case. For a more complicated cohomology ring, there also exist many investigations, such as the classifications of certain 5-manifolds \cite{Barden, KreckSu}, simply connected $6$-manifolds \cite{Jupp, Wall1966}, the $7$-manifolds modeled on Aloff-Wallach spaces \cite{KS1, KS, Kruggel}, the Bazaikin spaces \cite{FangShen}. 
Different from the above classifications that focus on smooth manifolds, Theorem \ref{classify} and the following classification are for certain PL manifolds.  

Let $\mathcal M$ denote certain PL $13$-manifolds, which have the same cohomology ring as $\mm{CP}^3\times S^7$. Partial these manifolds have been considered in the smooth category \cite{ShenW}. Recall a definition in \cite{ShenW}
\begin{definition}\label{reslift}
	Let $ x:\mathcal M\to \mm{CP}^\infty $ denote a generator of $ H^2( \mc{M};\mb{Z})$. We call that $\mathcal M$ admits a {\it restriction lift} if there exists a lift $f:\mc{M}\to \mm{CP}^4$ such that the following diagram is homotopy commutative
	\[
\xymatrix@C=0.8cm{
&&\mm{CP}^4\ar[d]^-{}\\
\mathcal M \ar[rr]^-{x}\ar@{.>}[rru]^-{f}& &  \mm{CP}^\infty
}
\] 
\end{definition}

Now, we are ready to present the classifying  theorem.
\begin{theorem}\label{classify13}
	Assume that $\mathcal M$, $\mathcal M^\prime$ admit {\it restriction lifts}. Then 
		\item[(1)] $\mathcal M$ is $\mm{PL}$-homeomorphic to $\mathcal M^\prime$ if and only if $p_1(\mathcal M)=p_1(\mathcal M^\prime)$.
		\item[(2)] $\mathcal M$ is homotopy equivalent to $\mathcal M^\prime$ if and only if $p_1(\mathcal M)=p_1(\mathcal M^\prime)\mod 24$.
\end{theorem}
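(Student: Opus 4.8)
The plan is to treat these two statements by separating, in each, the invariance direction (``only if'') from the realization-and-classification direction (``if''), and to organize everything around the \emph{restriction lift} of Definition \ref{reslift}. The lift $f\colon \mathcal M\to \mathbb{CP}^4$ together with the prescribed cohomology ring pins down the relevant tangential data of $\mathcal M$: since $H^{\ast}(\mathcal M)$ agrees with that of $\mathbb{CP}^3\times S^7$ and the Euler class obstruction in $H^8$ vanishes, $\mathcal M$ is, up to the equivalences in question, the total space of a $\mathrm{PL}$ $S^7$-bundle whose classifying data over the $4$-skeleton is governed by $p_1(\mathcal M)\in H^4(\mathcal M;\mathbb Z)\cong\mathbb Z$. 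The invariance direction of (1) is then immediate from Novikov's theorem: rational Pontryagin classes are $\mathrm{PL}$-homeomorphism invariants, and because $H^4(\mathcal M;\mathbb Z)$ is torsion free, a $\mathrm{PL}$-homeomorphism forces $p_1(\mathcal M)=p_1(\mathcal M')$.

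For the converse in (1) I would run modified surgery. First I determine the normal $6$-type $B\to B\mathrm{SPL}$ of $\mathcal M$ (the middle dimension is $6$ since $\dim\mathcal M=13=2\cdot 6+1$); the restriction lift shows that $B$ is the pullback over $B\mathrm{SPL}$ built from $\mathbb{CP}^4$ and the stable normal $\mathrm{PL}$ Gauss map, so that $\mathcal M,\mathcal M'$ share the same normal $6$-type once their characteristic classes agree. Two manifolds with a common normal $6$-type are $\mathrm{PL}$-homeomorphic as soon as their normal $B$-structures are bordant in $\Omega_{13}^{\mathrm{PL}}(B)$ and the middle-dimensional surgery obstruction vanishes; since $\mathcal M$ is simply connected and $L_{13}(\mathbb Z)=L_{\mathrm{odd}}(\mathbb Z)=0$, that obstruction is automatically zero. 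It then remains to compute $\Omega_{13}^{\mathrm{PL}}(B)$ — using the $\mathrm{PL}$ bordism and string $\mathrm{PL}$ bordism results established earlier — and to check that, after quotienting by the self-equivalences of $B$, the class of $(\mathcal M,\bar\nu)$ is determined precisely by $p_1$. An $h$-cobordism produced this way is an $s$-cobordism ($\pi_1=1$), so the $\mathrm{PL}$ $s$-cobordism theorem upgrades it to a $\mathrm{PL}$-homeomorphism.

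For (2) the governing invariant is the fiber homotopy type of the bundle, and this is where the modulus $24$ enters. The homotopy type of the total space depends only on the fiber homotopy class of the sphere bundle, i.e.\ on the image of its classifying map in the $J$-group; restricting to the $4$-skeleton, $\widetilde J(S^4)\cong \mathbb Z/24$ is cyclic of order $24$, generated by $\mathrm{Im}\,(J\colon \pi_3(\mathrm{SO})\to\pi_3^s)=\mathbb Z/24$, and on this group the Pontryagin class descends to an isomorphism with $\mathbb Z/24$. Consequently $p_1\bmod 24$ is a homotopy invariant, giving the ``only if'' of (2); conversely any two admissible values of $p_1$ congruent mod $24$ are realized by fiber-homotopy-equivalent bundles, whose total spaces I then identify up to homotopy equivalence by comparing Postnikov data, all remaining $k$-invariants being fixed by the common cohomology ring and the restriction lift.

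The main obstacle I anticipate is the explicit computation and bookkeeping of $\Omega_{13}^{\mathrm{PL}}(B)$ together with the proof that $p_1$ is a \emph{complete} invariant on it — in particular controlling possible $2$-torsion contributions and the action of the self-equivalences of $B$, where the $\mathrm{PL}$ (as opposed to smooth) setting genuinely matters. A secondary subtlety is making the identification $\widetilde J(S^4)\cong\mathbb Z/24$ interact correctly with the restriction-lift hypothesis, so that no homotopy invariant beyond $p_1\bmod 24$ survives; this is the step that must be handled with care to match the two moduli ($p_1$ versus $p_1\bmod 24$) across the two parts of the theorem.
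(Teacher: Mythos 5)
Your overall route coincides with the paper's: part (1) is proved there by putting a normal $B$-structure on $\mc{M}$ with $B=\mm{BPL}\langle 8\rangle\times \mm{CP}^4$ (the $\mm{CP}^4$ factor carrying the bundle $\xi=-s\mc{H}$ determined by $p_1=sx^2$), showing that a $B$-bordism can be surgered to a PL $h$-cobordism (Lemma~\ref{hcobordism}), and part (2) is reduced, via Remark~\ref{bundle}, to the fibre homotopy classification of $S^7$-bundles over $\mm{CP}^3$, where the modulus $24$ comes from $J$-theory exactly as you indicate. So the skeleton of your argument is the right one.

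The genuine gap is that the decisive step of (1) is only anticipated, not carried out: you list the computation of $\Omega_{13}^{\mm{PL}}(B)$, the possible $2$-torsion, and the action of self-equivalences of $B$ as ``obstacles,'' whereas the whole point of the paper's Section~3 is that these issues do not arise because the group vanishes. Concretely, the AHSS $E_2^{p,q}=H_p(\mm{M}\xi;\pi_q(\mm{MPL}\langle 8\rangle))$ has $E_2^{p,13-p}=0$ for all $p$ by Proposition~\ref{piMPL8} (here one needs $\pi_q(\mm{MPL}\langle 8\rangle)=0$ for $q=9,11,13$ and the $\mb{Z}_2$, $\mb{Z}\oplus\mb{Z}_4$ in degrees $10$, $8$ paired against the cells of $\mm{M}\xi$, which sit only in degrees $\equiv 0 \bmod 2$ above the Thom class), so $\Omega_{13}^{\mm{PL}\langle 8\rangle}(\xi)=0$ and any two manifolds with the same $p_1$ are automatically $B$-bordant; no analysis of self-equivalences is needed. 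Without this input your plan does not close. Two smaller inaccuracies: the surgery obstruction for improving a $14$-dimensional $B$-bordism to an $h$-cobordism is Kreck's obstruction in the monoid $l_{14}$, not an element of $L_{13}(\mb{Z})$ (the paper sidesteps this by citing the argument of \cite{ShenW}); and the $4$-skeleton of $\mm{CP}^3$ is $\mm{CP}^2$, not $S^4$, so the identification with $\widetilde{J}(S^4)\cong\mb{Z}/24$ needs the actual fibre-homotopy computation over $\mm{CP}^3$ (the paper outsources this to Lemma 5.2 of \cite{ShenW}). Also note that your opening claim that $\mc{M}$ ``is'' a sphere-bundle total space is in the paper a \emph{consequence} of part (1) (Remark~\ref{bundle}), not an a priori fact, so it cannot be used as an input to (1).
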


\begin{remark}\label{PonStieclass}
	Although $\mathcal M$, $\mathcal M^\prime$ are $\mm{PL}$ manifolds, we can also define the same Stiefel-Whitney classes $w_i$ for $i< 7$, and Pontryagin class $p_1$ since $H_i(\mm{BPL},\mm{BO};\mb{Z})=0$ for $i\le 7$. 
\end{remark}

\begin{remark}\label{bundle}
	For any $s\in \mb{Z}$, there exists an $8$-dimensional vector bundle $\eta$ over $\mm{CP}^3$ so that $p_1(\eta)=sx^2$. The total space $\overline{\mc{M}}$ of the sphere bundle of $\eta$ has the first Pontrjagin class $(4+s)x^2$. By the (1) of Theorem \ref{classify13}, any $\mc{M}$ with {\it restriction lift} is PL-homeomorphic to such the total space $\overline{\mc{M}}$.
\end{remark}

The $h$-cobordism theorem is a general method to classify simply connected manifolds.
We mainly use the PL cobordism and surgery to construct the $h$-cobordism, thus proving Theorem \ref{classify} and \ref{classify13}. Next, we introduce some conclusions of PL cobordism.

Throughout the rest of the paper, let $H^i(-)$ (or $H_i(-)$) denote the $\mb{Z}_2$-cohomology (or homology) group $H^i(-;\mathbb{Z}_2)$ (or $H_i(-;\mathbb{Z}_2)$).

The methods developed by Thom to study the unoriented differentiable cobordism ring $\Omega_\ast^{\mm{O}}$ have been applied successfully to various classes of smooth manifolds. Through the theory of microbundles developed by Milnor \cite{Milnor1964}, it is possible to apply Thom's ideas to study PL manifolds. 

Given a vector (or PL) bundle $\pi:E\to X$, one can define a right $\mathscr{A}$ action on $H^\ast(X)$ (see \cite{BrownPeter}) via
\begin{equation}
	x \cdot \alpha=\Psi^{-1}(\chi(\alpha) \Psi(x)) \label{rightaction}
\end{equation} 
for $x\in H^\ast(X)$, $\alpha\in \mathscr{A}$ where $\chi$ is the conjugation of $\mathscr{A}$, $T(E)$ is the Thom space of $E$, and $\Psi$ is the Thom isomorphism $\Psi: H^\ast(X) \to H^\ast(T(E))$. 

Analogous to vector bundles, there exists corresponding Thom isomorphism and classifying space for PL bundles \cite{Rudyak}.
Let BPL be the universal classifying space for stable PL bundles. $H^\ast(\mm{BPL})$ is a Hopf algebra, as to $H^\ast(\mm{BO})$. From \cite{BLP1966}, there is a Hopf algebra $C^\ast(\mm{PL})$ over the mod $2$ Steenrod algebra $\mathscr A$ such that 
\begin{equation}
	H^\ast(\mm{BPL})\cong H^\ast(\mm{BO})\otimes C^\ast(\mm{PL}) \label{PLtoBO}
\end{equation} 
 as Hopf algebras over $\mathscr{A}$, and  $\Omega_\ast^{\mm{PL}}\cong \Omega_\ast^{\mm{O}}\otimes C^\ast(\mm{PL})$ as algebras.
 In particular, the isomorphism (\ref{PLtoBO}) is an isomorphism of right $\mathscr A$-modules, and the right $\mathscr{A}$ action on $H^\ast(\mm{BO})\otimes C^\ast(\mm{PL})$ is given by the formula
 \begin{equation}
 (b \otimes c)\cdot  \alpha =\Sigma b\cdot  \alpha^\prime \otimes \chi(\alpha^{\prime \prime})c \label{rightmodule}
 \end{equation}
  where $\Delta(\alpha) =\Sigma \alpha^{\prime }\otimes \alpha^{\prime \prime} $ is the diagonal map in $\mathscr{A}$. 
 
  It is well known that the homotopy fiber $\mathrm{PL/O}$ of the natural map
$\mathscr{D} :\mathrm{BO}\to \mathrm{BPL}$ 
 is 6-connected. Then we 
take the $(n-1)$-connected covers of BPL for $n=2,4,8$, analogous to BO. 
Moreover, we have the natural map $\mathscr{D}\langle n\rangle :\mathrm{BO}\langle n\rangle\to \mathrm{BPL}\langle n\rangle$.
\begin{theorem}\label{PLtoBO248}
Let the right $\mathscr{A}$ action on $H^\ast(\mm{BO}\langle n\rangle)\otimes  C^\ast(\mm{PL})$ be given by the formula $($\ref{rightmodule}$)$. Then 
		$H^\ast(\mm{BPL}\langle n\rangle)\cong H^\ast(\mm{BO}\langle n\rangle)\otimes  C^\ast(\mm{PL})$ as right $\mathscr{A}$-modules for $n=2,4,8$. 
\end{theorem}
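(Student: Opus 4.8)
The plan is to deduce the covered statement from the known splitting (\ref{PLtoBO}) by naturality, exploiting that $\mm{PL}/\mm{O}$ is $6$-connected to force $\mm{BO}$ and $\mm{BPL}$ to agree in the relevant range. First I would record the homotopy-theoretic input. Since $\mm{PL}/\mm{O}$ is $6$-connected and $\pi_7(\mm{BO})=0$, the long exact sequence of $\mm{PL}/\mm{O}\to\mm{BO}\xrightarrow{\mathscr{D}}\mm{BPL}$ gives $\pi_i(\mm{BO})\cong\pi_i(\mm{BPL})$ for all $i\le 7$. Hence for $n=2,4,8$ the Postnikov truncations below degree $n$ coincide, $\tau_{<n}\mm{BO}\simeq\tau_{<n}\mm{BPL}=:B_n$, so $\mathscr{D}$ refines to $\mathscr{D}\langle n\rangle$. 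Moreover, the delooped fibre $B(\mm{PL}/\mm{O})$ is $7$-connected, so for $n=2,4,8$ passage to $(n-1)$-connected covers in the fibration $\mm{BO}\to\mm{BPL}\to B(\mm{PL}/\mm{O})$ yields a fibration
\[
\mm{BO}\langle n\rangle\longrightarrow\mm{BPL}\langle n\rangle\xrightarrow{\ p\langle n\rangle\ }B(\mm{PL}/\mm{O}),
\]
a homotopy-group count confirming that the fibre is exactly $\mm{BO}\langle n\rangle$.

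Next I would run the mod $2$ Serre spectral sequence of this fibration and compare it, through the covering maps $c_{\mm{O}}\colon\mm{BO}\langle n\rangle\to\mm{BO}$ and $c_{\mm{PL}}\colon\mm{BPL}\langle n\rangle\to\mm{BPL}$, with that of $\mm{BO}\to\mm{BPL}\to B(\mm{PL}/\mm{O})$. The latter collapses: this is the content of (\ref{PLtoBO}), under which $C^\ast(\mm{PL})$ is identified with the image of $H^\ast(B(\mm{PL}/\mm{O}))$. The covering maps commute with the projections to $B(\mm{PL}/\mm{O})$, so they induce a morphism of Serre spectral sequences that on $E_2=H^\ast(B(\mm{PL}/\mm{O}))\otimes H^\ast(\text{fibre})$ is $\mm{id}\otimes c_{\mm{O}}^\ast$. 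A crucial simplification is that $B(\mm{PL}/\mm{O})$ is $7$-connected, so $H^r(B(\mm{PL}/\mm{O}))=0$ for $1\le r\le 7$ and all differentials $d_r$ vanish for $r\le 7$ by lacunary reasons; the first possible differential is $d_8$, landing in $C^{\ge 8}(\mm{PL})\otimes H^\ast(\mm{BO}\langle n\rangle)$.

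The heart of the argument is to show the top spectral sequence also collapses, equivalently that $\mathscr{D}\langle n\rangle^\ast\colon H^\ast(\mm{BPL}\langle n\rangle)\to H^\ast(\mm{BO}\langle n\rangle)$ is surjective; Leray--Hirsch then gives the additive isomorphism $H^\ast(\mm{BPL}\langle n\rangle)\cong H^\ast(\mm{BO}\langle n\rangle)\otimes C^\ast(\mm{PL})$. On classes in the image of $c_{\mm{O}}^\ast$ collapse is immediate from naturality, since the bottom sequence collapses; the obstacle is the new classes created by the connected cover, for which $c_{\mm{O}}^\ast$ is in general not onto (as for $\mm{BSpin}$ and $\mm{BString}$). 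To handle these I would pass to the Eilenberg--Moore spectral sequences of the covering fibrations $\mm{BO}\langle n\rangle\to\mm{BO}\to B_n$ and $\mm{BPL}\langle n\rangle\to\mm{BPL}\to B_n$, which share the base $B_n$. Because the structure map $H^\ast(B_n)\to H^\ast(\mm{BPL})$ is $\mathscr{A}$-linear and carries the fundamental classes into the Steenrod-closed subalgebra $H^\ast(\mm{BO})\otimes 1$, it factors through $H^\ast(\mm{BO})$; hence in $H^\ast(\mm{BPL})=H^\ast(\mm{BO})\otimes C^\ast(\mm{PL})$ the algebra $H^\ast(B_n)$ acts only on the first factor, and $\mm{Tor}_{H^\ast(B_n)}(H^\ast(\mm{BPL}),\mb{Z}_2)\cong\mm{Tor}_{H^\ast(B_n)}(H^\ast(\mm{BO}),\mb{Z}_2)\otimes C^\ast(\mm{PL})$. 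Since the classes of $C^\ast(\mm{PL})$ are genuine permanent cycles, pulling back from $B(\mm{PL}/\mm{O})$ along $p\langle n\rangle$, multiplicativity forces every differential to be $d_r\otimes\mm{id}$; the whole spectral sequence splits as a tensor product, $\mathscr{D}\langle n\rangle^\ast$ is the surjection $\mm{id}\otimes\varepsilon$, and the additive isomorphism follows.

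Finally I would upgrade this to an isomorphism of right $\mathscr{A}$-modules. The right action (\ref{rightaction}) on each of $H^\ast(\mm{BO}\langle n\rangle)$ and $H^\ast(\mm{BPL}\langle n\rangle)$ is defined through the Thom isomorphism of the tautological stable bundle restricted from $\mm{BO}$, $\mm{BPL}$, so $c_{\mm{O}},c_{\mm{PL}}$ are right $\mathscr{A}$-module maps and the isomorphism just constructed is natural in the Steenrod action. As (\ref{PLtoBO}) already identifies the action on $H^\ast(\mm{BO})\otimes C^\ast(\mm{PL})$ with formula (\ref{rightmodule}), and $C^\ast(\mm{PL})$ enters the cover as the same inert tensor factor with the same action, naturality of the Thom isomorphism and of the conjugation $\chi$ transports (\ref{rightmodule}) to $H^\ast(\mm{BO}\langle n\rangle)\otimes C^\ast(\mm{PL})$; the action on the new classes is inherited because the tensor splitting of the Eilenberg--Moore spectral sequence is one of right $\mathscr{A}$-modules. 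The main difficulty throughout is controlling the higher differentials, i.e.\ establishing the inertness of the $C^\ast(\mm{PL})$-factor across all pages; once that is secured, every remaining step is naturality.
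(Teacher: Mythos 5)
Your overall strategy --- comparing the two fibrations over the $7$-connected delooping $B(\mm{PL}/\mm{O})$ and then splitting an Eilenberg--Moore spectral sequence over the common Postnikov truncation $B_n$ --- is genuinely different from the paper's, which instead climbs the Whitehead tower: it treats $\mm{BPL}\langle 8\rangle\to\mm{BPL}\langle 4\rangle$ as the principal $\mm{K}(\mb{Z},3)$-fibration induced by $g_l\colon\mm{BPL}\langle 4\rangle\to\mm{K}(\mb{Z},4)$, uses Stong's theorem that $H^\ast(\mm{K}(\mb{Z},4))\to H^\ast(\mm{BO}\langle 4\rangle)$ is injective (hence, via the already known $n=4$ splitting, so is $g_l^\ast$) to pin down every Serre differential by comparison with the universal fibration over $\mm{K}(\mb{Z},4)$ and with $\mm{BO}\langle 8\rangle\to\mm{BO}\langle 4\rangle$, and then gets the right $\mathscr{A}$-module statement from Giambalvo's result that $p_1^\ast\colon H^\ast(\mm{BO}\langle 4\rangle)\to H^\ast(\mm{BO}\langle 8\rangle)$ is an epimorphism.

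The problem is that your route has a genuine gap at exactly the step you yourself flag as the heart of the matter. You need the image of $H^\ast(B_n)\to H^\ast(\mm{BPL})$ to lie in the Stiefel--Whitney subalgebra $H^\ast(\mm{BO})\otimes 1$, and you justify this by observing that the fundamental classes land there and that this subalgebra is closed under Steenrod operations. That only controls the sub-$\mathscr{A}$-algebra of $H^\ast(B_n)$ generated by the fundamental classes; for $n=8$ the base $B_8=\tau_{\le 7}\mm{BO}$ is a multi-stage Postnikov section whose cohomology is not generated over $\mathscr{A}$ by its fundamental classes, and the comparison map $H^\ast(B_8)\to H^\ast(\mm{BO})$ acquires a kernel from degree $9$ onward (transgressions of $H^8(\mm{BO}\langle 8\rangle)$, i.e.\ the $k$-invariants), so knowing where a class of $H^\ast(B_8)$ goes in $H^\ast(\mm{BO})$ does not determine where it goes in $H^\ast(\mm{BPL})$: the ambiguity lives precisely in $H^\ast(\mm{BO})\otimes C^{>0}(\mm{PL})$. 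Worse, already in degree $8$ the only evident way to exclude a $p_8$-component in the image of $H^8(B_8)$ is to know that $p_8$ restricts nontrivially to $H^8(\mm{BPL}\langle 8\rangle)$ --- which is part of the conclusion being proved, so the argument is circular at its crux. (A secondary issue: even granting the $E_2$-splitting, the assertion that multiplicativity forces every differential to be $d_r\otimes\mm{id}$ requires the differentials on the $\mm{Tor}_{H^\ast(B_n)}(H^\ast(\mm{BO}),\mb{Z}_2)$-factor to be detected after projection to the $\mm{BO}$-spectral sequence, and that projection has a large kernel.) The paper's appeal to Stong's injectivity theorem is precisely the external computational input that fills the role this missing step would have to play; without an analogous input your spectral sequence comparison does not close.
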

The cases for $n=2,4$ have been proved in \cite{BLP1966}. We only prove the case $n=8$ in this paper.

For the Hopf algebra $C^\ast(\mm{PL})$, we have
\begin{theorem}\label{CastPL}
	$(1)$ $C^i(\mm{PL})=0$ for $1\le i\le 7$.
	
	$(2)$ $C^8(\mm{PL})=\mb{Z}_2$, $C^9(\mm{PL})=\mb{Z}_2^2$.

$(3)$ $C^{10}(\mm{PL})=\mb{Z}^3_2$, $C^{11}(\mm{PL})=\mb{Z}_2^2$.

$(4)$ $C^i(\mm{PL})\ne 0$ for $i\ge 12$.

The notation $\mb{Z}^j_2$ denotes the direct sum of $j$ groups with order $2$.
\end{theorem}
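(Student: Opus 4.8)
The plan is to identify $C^\ast(\mm{PL})$ with the reduced mod $2$ cohomology of the classifying space $B(\mm{PL/O})$ and then to extract the low degrees from the Kervaire--Milnor groups. The map $\mathscr D\colon\mm{BO}\to\mm{BPL}$ is the fibre inclusion of a fibration $\mm{BO}\to\mm{BPL}\to B(\mm{PL/O})$ obtained by delooping $\mm O\to\mm{PL}\to\mm{PL/O}$. Since the Stiefel--Whitney classes of a $\mm{PL}$ bundle restrict to those of an underlying vector bundle, $\mathscr D^\ast\colon H^\ast(\mm{BPL})\to H^\ast(\mm{BO})$ is onto; hence the fibre is totally non-homologous to zero, the Serre spectral sequence collapses, and $H^\ast(\mm{BPL})\cong H^\ast(B(\mm{PL/O}))\otimes H^\ast(\mm{BO})$ as $H^\ast(\mm{BO})$-modules. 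Comparing Poincar\'e series with the splitting $(\ref{PLtoBO})$ yields $C^i(\mm{PL})\cong\widetilde H^i(B(\mm{PL/O}))$ for $i\ge 1$. By smoothing theory $\pi_i(\mm{PL/O})\cong\Theta_i$ for $i\ge 5$, so $B(\mm{PL/O})$ has $\pi_i\cong\Theta_{i-1}$, and the whole theorem becomes a computation of $\widetilde H^\ast(B(\mm{PL/O});\mb Z_2)$.

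Part $(1)$ is then immediate: $\mm{PL/O}$ is $6$-connected, so $B(\mm{PL/O})$ is $7$-connected and $C^i(\mm{PL})=0$ for $1\le i\le 7$. For $(2)$ and $(3)$ I would run the Postnikov tower of $B(\mm{PL/O})$ in degrees $\le 11$, where it is controlled by $\Theta_7=\mb Z_{28}$, $\Theta_8=\mb Z_2$, $\Theta_9=\mb Z_2^3$ and $\Theta_{10}=\mb Z_6$ (that is, $\mb Z_4,\mb Z_2,\mb Z_2^3,\mb Z_2$ after localizing at $2$). The bottom stage $K(\mb Z_4,8)$ supplies the class $\iota_8$ in degree $8$, its higher Bockstein $\kappa_9$ in degree $9$, and $\mm{Sq}^2\iota_8$ in degree $10$; adjoining $K(\mb Z_2,9)$, $K(\mb Z_2^3,10)$ and $K(\mb Z_2,11)$ produces the remaining candidate generators. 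Before any differentials these spectral sequences overcount degrees $10$ and $11$, and the substance of $(2)$--$(3)$ is to show that the Postnikov differentials reduce the counts to the asserted $\dim C^9=2$, $\dim C^{10}=3$, $\dim C^{11}=2$.

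The main obstacle is exactly this reduction: I must determine the Postnikov $k$-invariants of $B(\mm{PL/O})$ in degrees $9$--$11$, equivalently the transgressions in the above spectral sequences. Matching $\dim C^9=2$ already forces the first $k$-invariant to vanish (so that the class coming from $\Theta_8$ survives), after which I must show that exactly two of the three classes coming from $\Theta_9$ transgress non-trivially into $H^{11}$, and that the surviving Bockstein-type classes in degree $11$ are cut down to two. These transgressions are governed by $\mm{Sq}^1,\mm{Sq}^2$ and the higher Bocksteins acting on the fundamental classes together with the Adem relations; the delicate point, and where the real work lies, is to prove that precisely the right combinations are non-zero. I would pin them down by computing the relevant (secondary) operations directly, or by cross-checking against the low-degree polynomial structure of $H^\ast(\mm{BPL};\mb Z_2)$, using that $\mathscr D^\ast$ identifies the $w_i$ and that the new generators must absorb the extra dimensions.

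For part $(4)$ I would argue multiplicatively. The key input is that the degree-$8$ generator $c_8\in C^8(\mm{PL})$ is non-nilpotent --- equivalently $\mm{Sq}^8 c_8=c_8^{\,2}\neq 0$ --- which follows from $c_8$ being a genuine polynomial generator of $H^\ast(\mm{BPL};\mb Z_2)$ rather than a truncated one. Granting this, $c_8\cdot y\neq 0$ for every non-zero $y\in C^\ast(\mm{PL})$, so it suffices to produce non-zero classes in eight consecutive degrees, say $12\le i\le 19$; multiplying by powers of $c_8$ then gives $C^i(\mm{PL})\neq 0$ for all $i\ge 12$. Here degrees $12,13,15$ carry generators coming from $\Theta_{11}=\mb Z_{992}$ and $\Theta_{14}=\mb Z_2$ together with the higher Bockstein on the $\mb Z_{32}$-summand, degrees $16$--$19$ are the products $c_8^{\,2},\,c_8 c_9,\,c_8 c_{10},\,c_8 c_{11}$ which are non-zero by $(2)$--$(3)$, and the one genuinely delicate case is degree $14$ --- where $\Theta_{12}=0$ leaves no primary generator --- which I would obtain as $\mm{Sq}^6 c_8$ (equivalently $\mm{Sq}^2$ of the degree-$12$ class), again checking survival via the Steenrod-module structure of $C^\ast(\mm{PL})$.
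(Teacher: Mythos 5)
Your reduction of the theorem to $\widetilde H^\ast(B(\mm{PL/O});\mb Z_2)$ is legitimate as a count of graded vector spaces (Leray--Hirsch applies because $\mathscr D^\ast$ is onto, and comparing Poincar\'e series with (\ref{PLtoBO}) identifies the dimensions), and it does give part $(1)$ and recover part $(2)$ once the known value of $C^9$ is fed back in to fix the first $k$-invariant. But for part $(3)$ your argument stops exactly where the content begins: you acknowledge that the Postnikov differentials in degrees $10$ and $11$ must cut the naive counts down to $3$ and $2$, and you propose to determine them ``by computing the relevant (secondary) operations directly'' without doing so. Determining those $k$-invariants of $B(\mm{PL/O})$ is essentially equivalent to the structural input you would otherwise have to quote; the paper sidesteps this by importing the Brumfiel--Madsen--Milgram decomposition $H_\ast(\mm{BSPL})\cong H_\ast(\mm{BSO})\otimes\mathscr V\otimes H_\ast(\mathscr K)$ together with Sullivan's description of $H^\ast(\mm{G/PL})$, and then simply counts in the factors: $\mathscr V_{10}=\mb Z_2$, $\mathscr V_{11}=0$, $H_{10}(\mathscr K)=\mb Z_2^2$, $H_{11}(\mathscr K)=\mb Z_2^2$ (Lemma \ref{VdePro}, Propositions \ref{C10} and \ref{C11}). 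As written, your treatment of $(3)$ is a plan, not a proof.

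Part $(4)$ has two further gaps. First, the claim that $c_8$ is a non-nilpotent polynomial generator, so that $c_8\cdot y\ne 0$ for every $y\ne 0$, is asserted rather than proved: $H^\ast(\mm{BPL};\mb Z_2)$ is a commutative Hopf algebra, hence by Borel a tensor product of polynomial and truncated polynomial algebras, and ruling out truncation for $c_8$ --- and, more to the point, showing $c_8$ is a non-zero-divisor on all of $C^\ast(\mm{PL})$ --- is precisely what would need to be established. Second, the seed classes in degrees $12$--$15$ are not shown to be nonzero; degree $14$, where you invoke $\mm{Sq}^6c_8$, is the critical case, and there is a hidden subtlety in ``checking survival via the Steenrod-module structure'': the tensor factor $C^\ast(\mm{PL})$ is preserved by the twisted right $\mathscr A$-action of (\ref{rightmodule}), not by the naive left action, so nonvanishing of a Steenrod operation on $c_8$ in $H^\ast(\mm{BPL})$ does not by itself produce a nonzero element of $C^{14}(\mm{PL})$. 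The paper's route is more economical and avoids both issues: Lemma \ref{K2primehomology} supplies classes $x_{4i}\in C^{4i}(\mm{PL})$ restricting to the fundamental classes of the $\mm K(\mb Z,4i)$ factors of $\mm{G/PL}$ for $i\ge 2$, and $\mm{Sq}^kx_{4i}\ne 0$ for $2\le k\le 5$ then covers every degree $\ge 12$.
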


The (1) and (2) in Theorem \ref{CastPL} are known (see \cite{Wall1964}), and the (3) and (4) are proved in this paper. 

Let $\Omega_n^{\mm{SPL}}$ be the oriented cobordism group of PL manifolds. In \cite{Will1966}, Williamson presented partial structures of the groups $\Omega_n^{\mm{SPL}}$ for $0\le n\le 18$. For example, $\Omega_{10}^{\mm{SPL}}=\mb{Z}_2\oplus \mm{2gp}$, $\Omega_{11}^{\mm{SPL}}=\mb{Z}_2\oplus \mb{Z}_3\oplus \mm{2gp}$ where $\mm{2gp}$ denotes a group whose order is a power of $2$. In this paper, we see more precise conclusions for the dimensions $n=10,11$.
\begin{proposition}
	$\Omega_{10}^{\mm{SPL}}=\mb{Z}_2\oplus \mb{Z}_2^2$, $\Omega_{11}^{\mm{SPL}}=\mb{Z}_2\oplus \mb{Z}_3$.
\end{proposition}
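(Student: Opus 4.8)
The plan is to compute the $2$-primary and odd-primary parts of $\Omega_n^{\mm{SPL}}$ separately for $n=10,11$, thereby refining Williamson's partial answer by pinning down the factor $\mm{2gp}$ at the prime $2$ and reconfirming the odd torsion. Throughout I work with the Thom spectrum $\mm{MSPL}$, so that $\Omega_\ast^{\mm{SPL}}=\pi_\ast(\mm{MSPL})$, and I use the Thom isomorphism together with the oriented analogue of the splitting (\ref{PLtoBO}) to write $H^\ast(\mm{MSPL})\cong H^\ast(\mm{MSO})\otimes C^\ast(\mm{PL})$ as a module over $\mathscr{A}$, the action being the diagonal one governed by (\ref{rightmodule}). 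I then invoke the classical theorem of Thom and Wall that $H^\ast(\mm{MSO})$ is a direct sum of copies of $\mathscr{A}/\mathscr{A}\mm{Sq}^1$, one for each $\mb{Z}$ summand of $\Omega_\ast^{\mm{SO}}$, and of free modules $\mathscr{A}$, one for each $\mb{Z}_2$ summand, together with the known values of $\Omega_n^{\mm{SO}}$ in this range.

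For the $2$-primary part I would run the mod $2$ Adams spectral sequence for $\mm{MSPL}$. The tensor factor $C^\ast(\mm{PL})$ is absorbed by the standard untwisting isomorphisms: for any $\mathscr{A}$-module $M$ one has $\mathscr{A}\otimes M\cong\mathscr{A}\otimes M^{\flat}$ (free, with $M^{\flat}$ carrying the trivial action) and $(\mathscr{A}/\mathscr{A}\mm{Sq}^1)\otimes M\cong\mathscr{A}\otimes_{\mathscr{A}(0)}M$, where $\mathscr{A}(0)=E[\mm{Sq}^1]$. Applying these to each summand of $H^\ast(\mm{MSO})$ shows that $H^\ast(\mm{MSPL})$ is again a direct sum of suspensions of $\mathscr{A}/\mathscr{A}\mm{Sq}^1$ and of free $\mathscr{A}$-modules, the latter being controlled by the $\mm{Sq}^1$-Margolis homology of $C^\ast(\mm{PL})$. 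Consequently the Adams $E_2$-page equals $E_\infty$ and there are no hidden extensions, since each free $\mathscr{A}$-summand contributes a class on which $h_0$ acts trivially, hence an element of order exactly $2$; thus the $2$-primary part of $\Omega_n^{\mm{SPL}}$ is elementary abelian of rank equal to the number of free $\mathscr{A}$-summands of $H^\ast(\mm{MSO})\otimes C^\ast(\mm{PL})$ in degree $n$. I would compute this number by the bookkeeping rule that the number of free summands with bottom class in degree $d$ of $\mathscr{A}\otimes_{\mathscr{A}(0)}C^\ast(\mm{PL})$ equals $\mm{rank}(\mm{Sq}^1\colon C^d(\mm{PL})\to C^{d+1}(\mm{PL}))$, shifted by the degrees of the summands of $H^\ast(\mm{MSO})$ and combined with Theorem \ref{CastPL}; because $C^i(\mm{PL})=0$ for $1\le i\le 7$, only the terms pairing $C^0,C^8,C^9,C^{10},C^{11}$ with the low-degree summands of $H^\ast(\mm{MSO})$ contribute in total degrees $10$ and $11$.

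For the odd-primary part I would use that $\mm{MSPL}$ and $\mm{MSO}$ agree after localization at an odd prime $p$ through the range where the homotopy fiber $\mm{PL/O}$ of $\mathscr{D}$ carries no $p$-torsion. Since $\mm{PL/O}$ is $6$-connected with $\pi_n(\mm{PL/O})\cong\Theta_n$, the group of homotopy $n$-spheres, and the only odd torsion among $\Theta_7,\dots,\Theta_{10}$ is the $\mb{Z}_3\subset\Theta_{10}=\mb{Z}_6$, the map $\mm{BSO}\to\mm{BSPL}$ is a $\mb{Z}_{(3)}$-homology isomorphism through degree $9$; this $\mb{Z}_3$ then transgresses (via the Serre spectral sequence for $\mm{PL/O}\to\mm{BSO}\to\mm{BSPL}$) to a single $\mb{Z}_3$ in $H_{11}(\mm{BSPL};\mb{Z}_{(3)})$, and hence, through the Thom isomorphism, to $\Omega_{11}^{\mm{SPL}}$. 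As $\Omega_\ast^{\mm{SO}}$ has no odd torsion (Wall), the odd torsion of $\Omega_n^{\mm{SPL}}$ is $0$ for $n=10$ and exactly $\mb{Z}_3$ for $n=11$, matching Williamson.

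Assembling the two computations should give $\Omega_{10}^{\mm{SPL}}=\mb{Z}_2^3=\mb{Z}_2\oplus\mb{Z}_2^2$ and $\Omega_{11}^{\mm{SPL}}=\mb{Z}_2\oplus\mb{Z}_3$. The main obstacle is the $2$-primary count: Theorem \ref{CastPL} records only the dimensions of $C^i(\mm{PL})$, whereas the number of free $\mathscr{A}$-summands requires the full $\mathscr{A}$-module structure of $C^\ast(\mm{PL})$ in degrees $8$–$11$, in particular the ranks of $\mm{Sq}^1$ (and, for identifying the $\mathscr{A}/\mathscr{A}\mm{Sq}^1$ summands and confirming the absence of a free part in these degrees, of $\mm{Sq}^2$). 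Establishing these Steenrod actions, and checking that the $\mm{Sq}^1$-Margolis homology of $H^\ast(\mm{MSO})\otimes C^\ast(\mm{PL})$ vanishes in degrees $10$ and $11$ so that no free summand is converted into a $\mathscr{A}/\mathscr{A}\mm{Sq}^1$ summand, is the delicate step; Williamson's decompositions $\mb{Z}_2\oplus\mm{2gp}$ and $\mb{Z}_2\oplus\mb{Z}_3\oplus\mm{2gp}$ then serve as a consistency check, forcing $\mm{2gp}=\mb{Z}_2^2$ and $\mm{2gp}=0$ respectively.
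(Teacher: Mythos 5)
Your overall framework---splitting $H^\ast(\mm{MSPL})\cong H^\ast(\mm{MSO})\otimes C^\ast(\mm{PL})$, untwisting the tensor factors, and reducing the $2$-primary computation to the $\mm{Sq}^1$-structure of $C^\ast(\mm{PL})$---is close in spirit to the paper's argument, which runs the algebraic Atiyah--Hirzebruch spectral sequence (\ref{AAHSSSPL}) over Pengelley's $\mm{Ext}_{\mathscr A}(H^\ast(\mm{MSO}),\mb{Z}_2)$ for the same purpose. But there is a genuine gap at the decisive step: your claim that the Adams $E_2$-page equals $E_\infty$, so that the $2$-primary part is elementary abelian of rank equal to the number of free summands, is false for $\mm{MSPL}$. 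The decomposition of $H^\ast(\mm{MSPL})$ into suspensions of $\mathscr{A}/\mathscr{A}\mm{Sq}^1$ and free modules determines $E_2$ but not the differentials: a filtration-zero generator of a free summand in stem $n$ can support a $d_r$ landing in an $h_0$-tower in stem $n-1$, truncating that tower and producing $\mb{Z}/2^k$ with $k\ge 2$. This happens one degree below the range you need: $\Omega_8^{\mm{SPL}}=\mb{Z}^2\oplus\mb{Z}_4$, and the $\mb{Z}_4$ arises exactly from $d_2(p_{9,1})=h_0^2p_8$ (Lemma \ref{d2p9SPL}). Your argument, applied verbatim in degree $8$, would output an elementary abelian torsion subgroup, which is wrong. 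To conclude that stems $10$ and $11$ are elementary abelian you must additionally show that the filtration-zero classes there neither support nor receive such differentials, i.e.\ you need the vanishing of $\mm{Ext}^{s,s+9}$ and $\mm{Ext}^{s,s+10}$ of $H^\ast(\mm{MSPL})$ for $s>0$; this is precisely what the paper extracts from the AAHSS together with the $\mm{Sq}^1$-action on $C^\ast(\mm{PL})$.

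Second, the input you flag as ``the main obstacle''---the ranks of $\mm{Sq}^1$ on $C^{8}(\mm{PL}),\dots,C^{11}(\mm{PL})$---is not an optional refinement but the core of the proof, and it cannot be recovered from Williamson, whose statement only identifies the unknown factor as \emph{some} $2$-group. The paper spends most of Section \ref{CPL} establishing exactly these actions ($\mm{Sq}^1p_8=0$, $\mm{Sq}^1p_{9,1}=0$, $\mm{Sq}^1p_{9,2}=p_{10,2}$, $\mm{Sq}^1p_{10,1}=p_{11,1}$, $\mm{Sq}^1p_{10,3}=p_{11,3}$) via the Brumfiel--Madsen--Milgram description of $H_\ast(\mm{BSPL})$ and the comparison with $\mm{G/PL}$ (Lemma \ref{K2primehomology}), plus Williamson's values in degrees $8$ and $9$ to pin down $\mm{Sq}^1p_{9,1}$. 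Until you supply these, your count of free $\mathscr{A}$-summands in degrees $10$ and $11$ has not been performed. A smaller inaccuracy: $\Theta_7=\mb{Z}_{28}$ contains $\mb{Z}_7$, so $\mm{PL/O}$ does carry odd torsion below degree $10$ and your assertion that $\mm{BSO}\to\mm{BSPL}$ is an odd-local homology isomorphism through degree $9$ needs the extra argument of Lemma \ref{BSPLhomologyZ} (which again feeds in $\pi_8(\mm{MSPL})=\mb{Z}^2\oplus\mb{Z}_4$); quoting Williamson directly for the odd-primary part, as the paper does, is the cleaner route.
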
 
 
 As to smooth string cobordism $\Omega_\ast^{\mm{O}\langle 8\rangle}$, one can define the string cobordism $\Omega_\ast^{\mm{PL}\langle 8\rangle}$ in the PL category. For certain dimensions, we have
 \begin{proposition}\label{piMPL8}
 $\Omega_{n}^{\mm{O}\langle 8\rangle}=\Omega_{n}^{\mm{PL}\langle 8\rangle}$ for $ n\le 7$, $\Omega_8^{\mm{PL}\langle 8\rangle}=\mb{Z}\oplus \mb{Z}_4$,
  
 $\Omega_9^{\mm{PL}\langle 8\rangle}=0$, $\Omega_{10}^{\mm{PL}\langle 8\rangle}=\mb{Z}_2$,
 $\Omega_{11}^{\mm{PL}\langle 8\rangle}=0$, $\Omega_{13}^{\mm{PL}\langle 8\rangle}=0$.
 \end{proposition}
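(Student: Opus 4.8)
The plan is to identify $\Omega_n^{\mm{PL}\langle 8\rangle}$ with the stable homotopy group $\pi_n M\mm{PL}\langle 8\rangle$ of the Thom spectrum over $\mm{BPL}\langle 8\rangle$, and to compute it by the mod $2$ Adams spectral sequence together with a separate odd-primary argument. The first step is to pin down $H^\ast(M\mm{PL}\langle 8\rangle)$ as an $\mathscr A$-module: by the Thom isomorphism it is $H^\ast(\mm{BPL}\langle 8\rangle)$ up to the shift by the Thom class, and Theorem \ref{PLtoBO248} identifies this with $H^\ast(\mm{BO}\langle 8\rangle)\otimes C^\ast(\mm{PL})$ as a right $\mathscr A$-module, the action being given by $(\ref{rightmodule})$. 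Feeding in Theorem \ref{CastPL} ($C^0=\mb{Z}_2$, $C^i=0$ for $1\le i\le 7$, $C^8=\mb{Z}_2$, $C^9=\mb{Z}_2^2$, $C^{10}=\mb{Z}_2^3$, $C^{11}=\mb{Z}_2^2$), one sees that through total degree $13$ the cohomology of $M\mm{PL}\langle 8\rangle$ coincides with that of $M\mm{O}\langle 8\rangle$ enlarged by finitely many extra generators, the lowest appearing in degree $8$.

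For $n\le 7$ I would compare directly with the smooth theory. The map $\mathscr D\langle 8\rangle$ induces a map of Thom spectra $M\mm{O}\langle 8\rangle\to M\mm{PL}\langle 8\rangle$ whose fibrewise analysis rests on $\mm{PL}/\mm{O}$, which is $6$-connected with $\pi_7(\mm{PL}/\mm{O})=\Theta_7=bP_8$. The induced map is then an isomorphism on $\pi_n$ for $n\le 6$, and at $n=7$ both groups vanish, since $\Omega_7^{\mm{O}\langle 8\rangle}=0$ and the exotic $7$-spheres of $bP_8$ bound parallelizable, hence string, manifolds. This gives $\Omega_n^{\mm{O}\langle 8\rangle}=\Omega_n^{\mm{PL}\langle 8\rangle}$ in that range. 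For $8\le n\le 13$ I would run the mod $2$ Adams spectral sequence $\mm{Ext}_{\mathscr A}^{s,t}(H^\ast(M\mm{PL}\langle 8\rangle),\mb{Z}_2)\Rightarrow\pi_{t-s}M\mm{PL}\langle 8\rangle$, taking as input the known Adams $E_2$-page and $2$-local homotopy of $M\mm{O}\langle 8\rangle$ and grafting on the generators of $C^{\ge 8}(\mm{PL})$. The new classes in degrees $8$ and $9$ are exactly what change the $2$-local answer: in degree $8$ they force a nontrivial extension promoting the smooth $\mb{Z}_2$ to $\mb{Z}_4$, while in degree $9$ they support Adams differentials annihilating the two smooth copies of $\mb{Z}_2$, leaving $\Omega_9^{\mm{PL}\langle 8\rangle}=0$; in degrees $10,11,13$ the $2$-primary part is unchanged.

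At an odd prime the mod $2$ object $C^\ast(\mm{PL})$ is irrelevant, and I would instead use smoothing theory through the fibre $\mm{PL}/\mm{O}$, whose homotopy groups are the Kervaire--Milnor groups $\Theta_n$. The forgetful map $\Omega_\ast^{\mm{O}\langle 8\rangle}\to\Omega_\ast^{\mm{PL}\langle 8\rangle}$ kills precisely the smooth string classes that become PL-null-bordant, and an exotic sphere is PL-homeomorphic to the standard sphere, hence bounds a PL disc and dies. In the range $n\le 13$ the only odd torsion of $\Theta_n$ occurs at $n=10$ and $n=13$, where $\Theta_{10}$ and $\Theta_{13}$ contain a $\mb{Z}_3$ with $bP_{n+1}$ trivial, so these inject into $\mm{coker}\,J$ and are detected by $\beta_1$, respectively $\alpha_1\beta_1$. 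These are exactly the $\mb{Z}_3$ summands of the known values $\Omega_{10}^{\mm{O}\langle 8\rangle}=\mb{Z}_6$ and $\Omega_{13}^{\mm{O}\langle 8\rangle}=\mb{Z}_3$, and they vanish in the PL theory, yielding $\Omega_{10}^{\mm{PL}\langle 8\rangle}=\mb{Z}_2$ and $\Omega_{13}^{\mm{PL}\langle 8\rangle}=0$. One checks that no other odd torsion interferes; in particular the $31$-torsion of $\Theta_{11}=bP_{12}$ bounds parallelizable and never reaches string bordism, consistent with $\Omega_{11}^{\mm{O}\langle 8\rangle}=\Omega_{11}^{\mm{PL}\langle 8\rangle}=0$.

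I expect the genuine difficulty to lie in the $2$-primary Adams computation, on two points. First, the module structure: the identification of Theorem \ref{PLtoBO248} is only as \emph{right} $\mathscr A$-modules, and the action $(\ref{rightmodule})$ entangles the $C^\ast(\mm{PL})$-generators with $H^\ast(M\mm{O}\langle 8\rangle)$ through the coproduct of $\mathscr A$, so determining the resulting $\mathscr A$-module extensions precisely enough to read off $\mm{Ext}$ in degrees $8$--$13$ is the crux. Second, once $E_2$ is known one must resolve the differentials that kill the degree-$9$ classes and the hidden $2$-extension responsible for the $\mb{Z}_4$ in degree $8$; these are not formal and will require either explicit comparison with $M\mm{O}\langle 8\rangle$ along the map of Thom spectra or a direct bordism-theoretic construction of an order-$4$ generator. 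By contrast, the odd-primary part is essentially bookkeeping with the Kervaire--Milnor groups.
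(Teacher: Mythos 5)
Your overall framework (Pontryagin--Thom, the splitting $H^\ast(\mm{MPL}\langle 8\rangle)\cong H^\ast(\mm{MO}\langle 8\rangle)\otimes C^\ast(\mm{PL})$, the Adams spectral sequence at $p=2$, and a separate odd-primary discussion) matches the paper for $n\le 10$, and your observation that exotic spheres die in the PL theory is exactly the lever the paper uses (Lemma \ref{exoticsphere}) to force the differentials $d_3(p_{9,2})=c_0$ and $d_4(h_1p_{9,1})=h_1\omega$. But there are two genuine gaps. First, dimensions $11$ and $13$ cannot be reached by the route you propose: a differential killing a class in stem $11$ (resp.\ $13$) must originate in stem $12$ (resp.\ $14$), so the Adams computation there requires $\mm{Ext}$ in stems $\ge 12$, hence knowledge of $C^{12}(\mm{PL})$ and beyond. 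The paper only establishes $C^i(\mm{PL})\ne 0$ for $i\ge 12$ without computing these groups, explicitly remarks that the algebraic method is intractable there, and instead proves $\Omega_{11}^{\mm{PL}\langle 8\rangle}=\Omega_{13}^{\mm{PL}\langle 8\rangle}=0$ by surgery (Proposition \ref{13MPL8}): a class is represented by a manifold with homology concentrated in degrees $0,n,n+1,2n+1$, and the middle-dimensional spheres are killed using $\pi_n(\mm{BPL}_{n+1})=0$ and uniqueness of the $\mm{BPL}\langle 8\rangle$-structure on $S^n$. Your assertion that ``in degrees $10,11,13$ the $2$-primary part is unchanged'' is precisely the statement that needs proof and is not obtainable from the data you have assembled.

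Second, your odd-primary argument only shows that the odd-torsion classes of $\Omega_n^{\mm{O}\langle 8\rangle}$ map to zero; it does not show that $\Omega_n^{\mm{PL}\langle 8\rangle}$ has no odd torsion of its own. Since $\pi_{10}(\mm{PL/O})$, $\pi_{11}(\mm{PL/O})$ and $\pi_{13}(\mm{PL/O})$ all contain odd torsion, $\mm{MO}\langle 8\rangle\to\mm{MPL}\langle 8\rangle$ is not an odd-primary equivalence in this range, and new odd classes could a priori appear. The paper closes this by proving $H_i(\mm{BPL}\langle 8\rangle;\mb{Z})$ has no odd torsion for $i<11$ (Lemma \ref{BPL8homologyZ}) and feeding this into the Atiyah--Hirzebruch spectral sequence for $\pi_\ast(\mm{MPL}\langle 8\rangle)$ --- which again only covers $n\le 10$, the surgery argument handling $n=11,13$. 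Finally, a smaller point: the $\mb{Z}_4$ in degree $8$ is not an extension ``promoting the smooth $\mb{Z}_2$'': the smooth torsion class $c_0$ is an exotic sphere and dies ($d_3(p_{9,2})=c_0$), while the $\mb{Z}_4$ is a new $h_0$-tower on $p_8$ truncated by $d_2(p_{9,1})=h_0^2p_8$, a differential detected by comparison with $\Omega_8^{\mm{SPL}}=\mb{Z}^2\oplus\mb{Z}_4$.
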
  
By considering the map $\Omega_n^{\mm{O}\langle 8\rangle}\to \Omega_n^{\mm{PL}\langle 8\rangle}$, we see
\begin{corollary}
	There exists an $(n-1)$-connected closed $\mm{PL}$ manifold of dimension $2n$ for $n=4,5$, which does not admit any smooth structure.
\end{corollary}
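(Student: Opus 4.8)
The plan is to exploit the splitting $H^\ast(\mm{BPL}\langle 8\rangle)\cong H^\ast(\mm{BO}\langle 8\rangle)\otimes C^\ast(\mm{PL})$ of Theorem \ref{PLtoBO248} to manufacture ``PL-intrinsic'' characteristic numbers that automatically vanish on smooth manifolds, and then to show that the PL string bordism groups of Proposition \ref{piMPL8} contain classes detected by such numbers. Concretely, write $\phi_\ast\colon\Omega_{2n}^{\mm{O}\langle 8\rangle}\to\Omega_{2n}^{\mm{PL}\langle 8\rangle}$ for the forgetful map; it is induced by $\mathscr{D}\langle 8\rangle\colon\mm{BO}\langle 8\rangle\to\mm{BPL}\langle 8\rangle$, and on cohomology $\mathscr{D}\langle 8\rangle^\ast$ is, under the above isomorphism, the augmentation on the $C^\ast(\mm{PL})$-factor. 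Hence any characteristic number of a closed PL string manifold that involves a class of positive degree from $C^\ast(\mm{PL})$ vanishes whenever the manifold is smooth, since its classifying map then factors through $\mm{BO}\langle 8\rangle$. Consequently a bordism class detected by such a number cannot lie in the image of $\phi_\ast$.

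First I would record the surgery input: since $\mm{BPL}\langle 8\rangle$ is $7$-connected and $n-1\le 4<n\le 5$, surgery below the middle dimension (which is unobstructed) applies to manifolds with $\mm{PL}\langle 8\rangle$-structure, so every class of $\Omega_{2n}^{\mm{PL}\langle 8\rangle}$ for $2n=8,10$ is represented by an $(n-1)$-connected closed PL manifold $M$ carrying a PL string structure. Next I would show that a smooth structure on such an $M$ forces it to be a smooth string manifold: being $(n-1)$-connected it is spin, and by Remark \ref{PonStieclass} its smooth and PL classes $p_1$ (hence $\tfrac12 p_1$) agree, so the PL string structure guarantees $\tfrac12 p_1=0$ smoothly as well; moreover string structures on $M$ form a torsor over $H^3(M;\mb{Z})=0$, so the structure is unique and the smooth and PL string structures correspond. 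Therefore, if $M$ admitted any smooth structure, it would define a class in $\Omega_{2n}^{\mm{O}\langle 8\rangle}$ whose image under $\phi_\ast$ is exactly $[M]$.

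It then remains to produce, in each dimension $2n=8,10$, a nonzero class of $\Omega_{2n}^{\mm{PL}\langle 8\rangle}$ outside the image of $\phi_\ast$; its $(n-1)$-connected representative cannot be smoothable by the previous paragraph. For $n=4$ this is immediate from orders: $\phi_\ast$ is a rational isomorphism (so the free summand is hit), while the torsion image is a quotient of the smooth $2$-torsion and hence has order at most $2$, strictly smaller than the PL torsion $\mb{Z}_4$ of Proposition \ref{piMPL8}; thus the cokernel is nonzero and any generator of $\mb{Z}_4$ serves. For $n=5$ the orders alone do not suffice, since $\phi_\ast\colon\mb{Z}_6\to\mb{Z}_2$ could a priori be onto; here I would instead identify the generator of $\Omega_{10}^{\mm{PL}\langle 8\rangle}=\mb{Z}_2$ with a class detected by a characteristic number built from the positive-degree part of $C^\ast(\mm{PL})$, which by the first paragraph vanishes on every smooth string manifold, so this generator lies outside the image and $\phi_\ast$ is in fact zero.

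The main obstacle is precisely this last point in dimension $10$: one must pin down, inside the computation establishing $\Omega_{10}^{\mm{PL}\langle 8\rangle}=\mb{Z}_2$ in Proposition \ref{piMPL8}, that the generator pairs nontrivially with a PL-intrinsic class coming from $C^\ast(\mm{PL})$ rather than from $H^\ast(\mm{BO}\langle 8\rangle)$. Equivalently, one must verify that the smooth $2$-torsion in $\Omega_{10}^{\mm{O}\langle 8\rangle}=\mb{Z}_6$ dies in the PL theory, which is where the explicit structure of $C^\ast(\mm{PL})$ in degrees $8$--$10$ from Theorem \ref{CastPL} enters decisively. Granting this, the corollary follows for both $n=4$ and $n=5$.
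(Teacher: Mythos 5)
Your overall strategy --- compare $\Omega_{2n}^{\mm{O}\langle 8\rangle}$ with $\Omega_{2n}^{\mm{PL}\langle 8\rangle}$, use surgery below the middle dimension to represent a class outside the image of the forgetful map $\phi_\ast$ by an $(n-1)$-connected PL manifold, and observe that a smooth structure would force the class back into the image --- is essentially the paper's, and your $n=4$ case is complete: $\mb{Z}\oplus\mb{Z}_4$ is not a quotient of $\mb{Z}\oplus\mb{Z}_2$, so $\phi_\ast$ cannot be onto in dimension $8$. The gap is exactly where you locate it: for $n=5$ you never show that the generator of $\Omega_{10}^{\mm{PL}\langle 8\rangle}=\mb{Z}_2$ lies outside the image of $\phi_\ast\colon\mb{Z}_6\to\mb{Z}_2$; you only reduce the corollary to this claim and write ``granting this.'' That reduction is not a proof, and the detection mechanism you propose (pairing against a positive-degree class of $C^\ast(\mm{PL})$) cannot be read off from Theorem \ref{CastPL} alone: to know that the surviving class in $\pi_{10}(\mm{MPL}\langle 8\rangle)$ sits in Adams filtration $0$ on a $C^{10}(\mm{PL})$-generator one already needs the differentials of Lemma \ref{2priPL8}, which are the hard part of Proposition \ref{MPL8homotopy}.

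The paper closes this gap by a different and more elementary mechanism, Lemma \ref{exoticsphere}: every torsion class of $\pi_{10}(\mm{MO}\langle 8\rangle)$ --- and $\Omega_{10}^{\mm{O}\langle 8\rangle}=\mb{Z}_6$ is all torsion --- is represented by an exotic sphere with its unique $\mm{BO}\langle 8\rangle$-structure. An exotic sphere is PL-homeomorphic to the standard sphere, hence bounds a PL disc and represents $0$ in $\Omega_{10}^{\mm{PL}\langle 8\rangle}$; therefore $\phi_\ast=0$ in dimension $10$ and the nonzero class of $\mb{Z}_2$ is automatically outside the image. (The same observation is what forces the differentials $d_3(p_{9,2})=c_0$ and $d_4(h_1p_{9,1})=h_1\omega$ in the paper's Adams spectral sequence computation, so your characteristic-number picture is a consequence of this input rather than a substitute for it.) To complete your argument, import Lemma \ref{exoticsphere} at the point where you currently write ``granting this.''
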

The above corollary implies that the generators with finite order in $\Omega_n^{\mm{PL}\langle 8\rangle}$ for $n=8,10$ can not be represented by any smooth manifolds as in \cite{Wall1962}. In particular, the nontrivial element of $\Omega_{10}^{\mm{PL}\langle 8\rangle}$ can be represented by the manifold $M_0$. Moreover, the corollary gives an answer for the Milnor's question in the theorem $4$ of \cite{Milnor1956}. 

Note that every $4$-connected closed manifold of dimension $10$ admits a unique $\mm{BPL}\langle 8\rangle$-structure, i.e. the classifying map of its PL normal bundle has a lift to $\mm{BPL}\langle 8\rangle$, unique up to homotopy. By $\Omega_{10}^{\mm{PL}\langle 8\rangle}=\mb{Z}_2$, all $4$-connected closed manifolds of dimension $10$ are divided into two $\mm{PL}\langle 8\rangle$ bordism classes, which are distinguished by the Kervaire invariant. Then, Theorem \ref{classify} follows by the classifying theorem \ref{PLhome}.

The plan of this paper is then as follows. In Section 2, we do the preparatory work
for $C^\ast(\mm{PL})$ and $\Omega_\ast^{\mm{SPL}}$; in Section 3 we compute $\Omega_\ast^{\mm{PL}\langle 8\rangle}$ in certain dimensions; in Section 4 we apply the PL cobordism to prove the conjecture of Kervaire;  another application in Section 5 is to classify certain PL $13$-manifolds. 

\section{$C^\ast(\mm{PL})$ and $\Omega_\ast^{\mm{SPL}}$ cobordism}\label{CPL}
We first recall the isomorphism of Hopf algebras \cite{BMM1973}  
\begin{equation}
	H_\ast(\mm{BSPL}) \cong H_\ast(\mm{BSO})\otimes V^{(1)}(K[3]\oplus R\langle 4\rangle)\otimes \Gamma(T)\otimes H_\ast(\mathscr{K}) \label{2localiso}
\end{equation}
where $\mathscr{K}$ is the product of Eilenberg-MacLane spaces $$\Pi_{n\ne 2^i} \mm{K}(\mb{Z}_2, 4n-2) \times \Pi_{n>1} \mm{K}(\mb{Z},4n).$$
Other notations will be explained briefly as follows.
 
$R[k]$ is a coalgebra over $\mathscr A$, its dual $R[k]^\ast$ is a polynomial algebra,
	 $$R[k]^\ast=P[\xi_{1k},\cdots,\xi_{kk}]\quad \mm{deg}(\xi_{ik})=2^{k-i}(2^i-1)$$
	 and is an unstable algebra over $\mathscr A$. Let
$R\langle n\rangle=\oplus_{k\ge n}R[k]$.

Let $\mm{PH}^\ast(\mm{K}(\mb{Z}, 4))$ be the primitive elements of $H^\ast(\mm{K}(\mb{Z}, 4))$, and let 	  
$\alpha: \mm{PH}^\ast(\mm{K}(\mb{Z}, 4)) \to  R[3]^\ast$ be the $\mathscr A$-module homomorphism
with $\alpha(\ell_4) = \xi_{13}$. Indeed, the homomorphism $\alpha$ is a monomorphism \cite{BMM1973}. Let $K[3]$ be the kernel of the dual epimorphism
$\alpha^\ast: R[3] \to  Q(H_\ast(\mm{K}(\mb{Z}, 4)))$ where $Q(-)$ is the vector space of indecomposable elements.

For a graded module $N$, let $sN$ be $N$ with degrees shifted up by $1$. We define $V^{(1)}(N)=T(s\bar{N})/\mathscr{F}$ where $T(s\bar{N})$ is the tensor algebra on $s\bar{N}$, $\bar N\subset N$ the subspace of elements of positive dimension, and $\mathscr F$ is a certain ideal.

There is a monomorphism $i_1:E(R[2])\to H_\ast(\mathscr{K}_1)$ where $E(R[2])$ is the exterior algebra, $\mathscr{K}_1= \Pi_{n\ge 2} \mm{K}(\mb{Z}_2,2^n-2).$ Let $\Gamma(T)=H_\ast(\mathscr{K}_1)//i_1$. 

One can see more details about the above notations in \cite{BMM1973}. Here we pay more attention to the structure of $\mb{Z}_2$-vector space. 

Let $\mathscr{V}=V^{(1)}(K[3]\oplus R\langle 4\rangle)\otimes \Gamma(T)$. 
Since the isomorphism (\ref{PLtoBO}), we see that $C^\ast(\mm{PL})$ is isomorphic to $\mathscr{V}\otimes H_\ast(\mathscr{K})$ as vector spaces.

\begin{lemma}\label{VdePro}
Let $\mathscr{V}_i$ be the set of all elements of degree $i$. $\mathscr{V}_8=0$, $\mathscr{V}_9=\mb{Z}_2^2$, $\mathscr{V}_{10}=\mb{Z}_2$, $\mathscr{V}_{11}=0$. Furthermore, one of generators of $\mathscr{V}_9$ is provided by $\Gamma(T)$, $\mathscr{V}_{10}\subset\Gamma(T)$.
\end{lemma}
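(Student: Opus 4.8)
The plan is to compute the relevant graded pieces of the vector space $\mathscr{V}=V^{(1)}(K[3]\oplus R\langle 4\rangle)\otimes \Gamma(T)$ dimension-by-dimension in degrees $8$ through $11$, by assembling the contributions of the three tensor factors $V^{(1)}(K[3])$, $V^{(1)}(R\langle 4\rangle)$, and $\Gamma(T)$ (modulo the interaction forced by the functor $V^{(1)}$ on the direct sum $K[3]\oplus R\langle 4\rangle$). First I would record the low-degree generators of each building block. The generator $\ell_4\in \mm{PH}^\ast(\mm{K}(\mb{Z},4))$ sits in degree $4$, so $R[3]^\ast$ has its lowest generators $\xi_{13},\xi_{23},\xi_{33}$ in degrees $4,6,7$ (using $\mm{deg}(\xi_{i3})=2^{3-i}(2^3-1)$), and the kernel $K[3]$ of $\alpha^\ast$ must be understood from how $\alpha$ embeds the primitives; since the suspension shift $s$ raises degree by one, the contributions of $V^{(1)}$ begin one degree above the lowest nonzero degree of its argument. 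Likewise $R\langle 4\rangle=\oplus_{k\ge 4}R[k]$ has its first generator $\xi_{14}$ in degree $8$. For $\Gamma(T)=H_\ast(\mathscr{K}_1)//i_1$ with $\mathscr{K}_1=\Pi_{n\ge 2}\mm{K}(\mb{Z}_2,2^n-2)$, the Eilenberg--MacLane factors start in degrees $2^n-2=2,6,14,\dots$, and I would use the known structure of $H_\ast(\mm{K}(\mb{Z}_2,m))$ as a polynomial algebra on admissible Steenrod operations applied to the fundamental class, then quotient by the image of $i_1$ (the exterior algebra $E(R[2])$, whose generators $\xi_{12},\xi_{22}$ lie in degrees $2,3$).

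The core computation is then a bookkeeping of monomials of total degree $8,9,10,11$ in these generators, remembering the suspension shift in the $V^{(1)}$ factors and the quotient defining $\Gamma(T)$. I would organize this as a table: for each degree, list the admissible products from $\Gamma(T)$, from $V^{(1)}(K[3])$, and from $V^{(1)}(R\langle 4\rangle)$, and combine them via the tensor product. The claimed answers $\mathscr{V}_8=0$, $\mathscr{V}_9=\mb{Z}_2^2$, $\mathscr{V}_{10}=\mb{Z}_2$, $\mathscr{V}_{11}=0$ should fall out once the degrees of all low generators are pinned down; in particular the assertion that one generator of $\mathscr{V}_9$ comes from $\Gamma(T)$ and that $\mathscr{V}_{10}\subset\Gamma(T)$ tells me which factor dominates and gives a useful consistency check against the raw count. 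I expect $\mathscr{V}_8=0$ to follow because the lowest $V^{(1)}$-contributions (from $s\bar{K[3]}$ and $s\,\xi_{14}$, i.e. degrees $5$ and $9$) and the lowest $\Gamma(T)$-classes do not combine to hit degree $8$ with the correct parity, while degree $9$ picks up exactly two independent classes.

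The main obstacle, and where I would spend the most care, is determining $K[3]$ precisely and handling the functor $V^{(1)}$ on the direct sum: because $V^{(1)}(N)=T(s\bar N)/\mathscr F$ is defined by quotienting the tensor algebra by the ideal $\mathscr F$, the dimension of $V^{(1)}(K[3]\oplus R\langle 4\rangle)$ in a given degree is \emph{not} simply the tensor product of the individual $V^{(1)}$'s, and I must verify that no relations in $\mathscr F$ (nor cross-terms mixing $K[3]$ and $R\langle 4\rangle$ generators) alter the count in degrees $\le 11$. Fortunately the generators of $R\langle 4\rangle$ start only in degree $8$, so in the range of interest the only products involving two $V^{(1)}$-generators would land in degree $\ge 10$ (two copies of the degree-$5$ suspended $K[3]$ class), and I would check directly whether such a square survives or is killed by $\mathscr F$; this is the delicate point controlling $\mathscr{V}_{10}$ and $\mathscr{V}_{11}$. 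Once the low-degree generators and the action of $\mathscr F$ are settled, the rest is the routine additive bookkeeping described above, and I would close by matching the totals against the stated values and confirming the source of each generator to establish the final two sentences of the lemma.
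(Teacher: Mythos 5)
Your overall plan---a degree-by-degree count of the three factors of $\mathscr{V}$---could be made to work, but as written it contains a concrete error and leaves the decisive computations unexecuted. The error is where you place the bottom of $K[3]$: you take the lowest class of $s\overline{K[3]}$ to sit in degree $5$, i.e.\ $K[3]_4\ne 0$. In fact $\alpha(\ell_4)=\xi_{13}$, and since $R[3]^\ast$ is an unstable $\mathscr A$-algebra one gets $\alpha(\ell_4^2)=\alpha(\mm{Sq}^4\ell_4)=\xi_{13}^2$, $\alpha(\mm{Sq}^4\mm{Sq}^2\ell_4)=\xi_{13}\xi_{23}$, etc.; comparing dimensions, the epimorphism $\alpha^\ast\colon R[3]\to Q(H_\ast(\mm{K}(\mb{Z},4)))$ is an isomorphism in every degree $\le 11$ in which $R[3]$ is nonzero (degrees $4,6,7,8,10,11$), so $K[3]$ vanishes below degree $12$. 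Consequently the only suspended generator of $V^{(1)}(K[3]\oplus R\langle 4\rangle)$ in total degree $\le 11$ is the degree-$9$ class coming from $\xi_{14}$, there are no products in this range (the first would appear in degree $18$), and the "delicate point" you isolate about $\mathscr F$ and cross-terms in degree $10$ is moot. With your putative degree-$5$ class the bookkeeping would output wrong dimensions in degrees $9$, $10$ and $11$, so this is not a cosmetic slip.

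The second gap is that the two computations on which everything rests---the low-degree structure of $K[3]$ and of $\Gamma(T)$---are only promised, not performed. Note also that the paper's argument for degrees $8$ and $9$ is not a direct count at all: since $C^\ast(\mm{PL})\cong\mathscr V\otimes H_\ast(\mathscr K)$ as vector spaces, $H_j(\mathscr K)=0$ for $0<j<8$ and $j=9$, and $H_8(\mathscr K)=\mb Z_2$, the known values $C^8(\mm{PL})=\mb Z_2$ and $C^9(\mm{PL})=\mb Z_2^2$ from Wall immediately force $\mathscr V_8=0$ and $\mathscr V_9=\mb Z_2^2$ with no analysis of $K[3]$, $\mathscr F$ or $\Gamma(T)$ whatsoever. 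The remaining claims then need only the counts $\Gamma(T)_9=\mb Z_2$, $\Gamma(T)_{10}=\mb Z_2$, $\Gamma(T)_{11}=0$ (from $\dim H_i(\mathscr K_1)$ versus $\dim E(R[2])_i$, namely $8-7$, $9-8$, $11-11$) together with the vanishing of $V^{(1)}(K[3]\oplus R\langle 4\rangle)$ in degrees $10$ and $11$ noted above. To salvage your fully direct route you must actually carry out these counts and correct the location of $K[3]$; as it stands the lemma does not follow from the plan.
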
 
\begin{proof}
	Note that $C^8(\mm{PL})=\mb{Z}_2$ and $C^9(\mm{PL})=\mb{Z}^2_2$ \cite{Wall1964}. By $H_{8}(\mathscr{K})=\mb{Z}_2$ and $H_{9}(\mathscr{K})=0$, we have $\mathscr{V}_8=0$ and $\mathscr{V}_9=\mb{Z}_2^2$. Since $H_9(\mathscr{K}_1)=\mb{Z}_2^8$ and $E(R[2])_{9}=\mb{Z}_2^7$, we have $\Gamma(T)_9=\mb{Z}_2$.
		 
  Since that $K[3]\oplus R\langle 4\rangle$ does not contain any element of degree $9$, $V^{(1)}({K[3]\oplus R\langle 4\rangle})$  does not contain any element of degree $10$. Since  $H_{10}(\mathscr{K}_1)=\mb{Z}_2^{9}$ and  $E(R[2])=\mb{Z}_2^8$, $\mathscr{V}_{10}=\Gamma(T)_{10}=\mb{Z}_2$.  
    	 
 Note that ${ R[ 3]}$ only has an element of degree $10$, which is the dual of $\xi_{13}\xi_{23}$ in $R[3]^\ast$. Note that $\alpha(\mm{Sq}^4\mm{Sq}^2\ell_4)=\xi_{13}\xi_{23}$. From the definition, $K[3]$ does not have any element of degree $10$. Since that the smallest degree of the elements of $R\langle 4\rangle $ is $8$, and the smallest degree of $R[ 3] $ is $4$, we see that 
  $V^{(1)}({K[3]\oplus R\langle 4\rangle})$ does not have any element of degree $11$. By $H_{11}(\mathscr{K}_1)=\mb{Z}_2^{11}$ and $E(R[2])_{11}=\mb{Z}_2^{11}$, $\mathscr{V}_{11}=\Gamma(T)_{11}=0$. 
\end{proof}

Recall the work of Sullivan, i.e. there is a map $$G/\mm{PL}\to \mm{K}(\mb{Z}_2,2) \times_{\delta\mm{Sq}^2} \mm{K}(\mb{Z}, 4) \times \Pi_{n\ge 1}(\mm{K}(\mb{Z}_2, 4n +2) \times \mm{K}(\mb{Z}, 4n +4) )$$
inducing an isomorphism in $\mb{Z}_2$ cohomology. Let $k_i\in H^i(\mm{G/PL})$ correspond to the fundamental class $\ell_i\in H^i(\mm{K}(\mb{Z}_2,i))$ or $H^i(\mm{K}(\mb{Z},i))$. From \cite{BMM1971} and \cite{BMM1973}, we have the following lemma.
\begin{lemma}\label{K2primehomology}
	 $\Gamma(T)\otimes H_\ast(\mathscr{K})$ as the subspace of $H_\ast(\mm{G/PL})$ is mapped injectively into $H_\ast(\mm{BSPL})$ through the natural homomorphism
	 $$H_\ast(\mm{G/PL})\to H_\ast(\mm{BSPL}).$$
	 Conversely, for any $i\ne 2^j-1$, there exists an element $x\in C^{2i}(\mm{PL})$ such that its image is $k_{2i}+ \cdots$ under the natural homomorphism $$ H^\ast(\mm{BSPL})\to H^\ast(\mm{G/PL}).$$ 
\end{lemma}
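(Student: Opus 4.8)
The plan is to deduce both halves from the oriented surgery fibration $\mm{G/PL}\xrightarrow{i}\mm{BSPL}\xrightarrow{j}\mm{BSG}$ (with $\mm{BSG}$ the classifying space of oriented spherical fibrations) together with the structure theorem (\ref{2localiso}), by matching the Eilenberg--MacLane factors of $\mm{G/PL}$ produced by Sullivan's theorem against the tensor factors $\Gamma(T)$ and $H_\ast(\mathscr{K})$ of $H_\ast(\mm{BSPL})$. The one fact I would isolate at the outset is that the Stiefel--Whitney classes of $\mm{BSPL}$ are pulled back from $\mm{BSG}$, being the Stiefel--Whitney classes of the underlying spherical fibration; since $j\circ i$ is null-homotopic, every positive-degree Stiefel--Whitney class restricts to $0$ on $\mm{G/PL}$. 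For the bookkeeping: by Sullivan the $k_{2i}$ ($i\ge 1$) are the fundamental classes of the Eilenberg--MacLane factors of $\mm{G/PL}$, one in each even degree, of $\mb{Z}_2$-type in degrees $\equiv 2\pmod 4$ and of $\mb{Z}$-type in degrees $\equiv 0\pmod 4$, while the $\mb{Z}_2$-factors of $\mathscr{K}$ sit in degrees $4n-2$ ($n$ not a power of $2$), its $\mb{Z}$-factors in degrees $4n$ ($n>1$), and $\mathscr{K}_1$ in the degrees $2^n-2$. Comparing these lists shows that $\mathscr{K}$ and $\mathscr{K}_1$ together exhaust the $\mb{Z}_2$-factors of $\mm{G/PL}$, that $\mathscr{K}$ exhausts the $\mb{Z}$-factors in degrees $\ge 8$, and that the one remaining $\mb{Z}$-factor, $k_4$ in degree $4$, is the index class absorbed by $H_\ast(\mm{BSO})$; note that the degrees $2^n-2$ of $\mathscr{K}_1$ are exactly the degrees $2i$ with $i=2^j-1$.

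For the first assertion I would follow the inclusion $\mathscr{K}\times\mathscr{K}_1\hookrightarrow \mm{G/PL}$ of the corresponding sub-products of Eilenberg--MacLane spaces, composed with $i$. Under (\ref{2localiso}) the $\mathscr{K}$-coordinates map isomorphically onto the factor $H_\ast(\mathscr{K})$, whereas on $\mathscr{K}_1$ the map factors through the defining quotient $H_\ast(\mathscr{K}_1)\to\Gamma(T)=H_\ast(\mathscr{K}_1)//i_1$; the part killed by this quotient is exactly the image of $i_1$ (a copy of $E(R[2])$), namely the Stiefel--Whitney data absorbed by $H_\ast(\mm{BSO})$, in agreement with the previous paragraph. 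Choosing a coalgebra splitting of the quotient realizes $\Gamma(T)$ as a subspace of $H_\ast(\mathscr{K}_1)\subset H_\ast(\mm{G/PL})$ on which $i_\ast$ is injective. Since the images of $\Gamma(T)$ and $H_\ast(\mathscr{K})$ occupy distinct tensor factors of (\ref{2localiso}), their product $\Gamma(T)\otimes H_\ast(\mathscr{K})$ injects; this injectivity of $i_\ast$ on the two factors is the real content, and I would confirm it by chasing the explicit maps used to build (\ref{2localiso}) in \cite{BMM1973}.

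The second assertion is the cohomological dual, refined to land in $C^\ast(\mm{PL})$. Fix $i\neq 2^j-1$ with $2i\ge 8$, so that $C^{2i}(\mm{PL})\neq 0$; the low classes $k_2,k_4,k_6$ are accounted for by Stiefel--Whitney or index classes and so cannot be hit by $C^\ast(\mm{PL})$, consistently with $C^i(\mm{PL})=0$ for $i\le 7$. Since $2i$ is not a degree $2^n-2$ of $\mathscr{K}_1$, the homology class dual to $k_{2i}$ lies in the $H_\ast(\mathscr{K})$-factor, which injects by the previous paragraph, so there is a class $\tilde x\in H^{2i}(\mm{BSPL})$ with $i^\ast\tilde x=k_{2i}+(\text{decomposables})$, the indecomposable part being reducible to $k_{2i}$ alone by taking $\tilde x$ dual to the generator of that factor and correcting through the Hopf-algebra structure. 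Writing $\tilde x$ in the oriented analogue $H^\ast(\mm{BSPL})\cong H^\ast(\mm{BSO})\otimes C^\ast(\mm{PL})$ of (\ref{PLtoBO}) and applying the augmentation of $H^\ast(\mm{BSO})$ yields an element $x\in C^{2i}(\mm{PL})$; because every positive-degree Stiefel--Whitney class restricts to $0$ on $\mm{G/PL}$, the $H^\ast(\mm{BSO})$-part of $\tilde x$ restricts to $0$, whence $i^\ast x=i^\ast\tilde x=k_{2i}+\cdots$, as required.

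The main obstacle is the exact identification, inside the decomposition (\ref{2localiso}), of the image of each Eilenberg--MacLane factor of $\mm{G/PL}$: one must separate the Stiefel--Whitney data (the image of $i_1$, i.e.\ $E(R[2])$, together with the degree-$4$ index factor) from the genuinely $\mm{PL}$ part $C^\ast(\mm{PL})$. This dichotomy between the degrees $2^n-2$ and the remaining even degrees is exactly what forces the hypothesis $i\neq 2^j-1$, and pinning it down rigorously requires the detailed naturality of the maps in \cite{BMM1971,BMM1973}. The remaining care lies in controlling the decomposable correction terms ``$+\cdots$'' and in checking that the chosen splittings respect the right $\mathscr{A}$-action (\ref{rightmodule}), so that $x$ genuinely lies in the sub-$\mathscr{A}$-module $C^\ast(\mm{PL})$.
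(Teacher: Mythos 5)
The paper offers no proof of this lemma: it is stated as a direct quotation from \cite{BMM1971} and \cite{BMM1973}, so there is no argument of the paper's own to compare against. Your sketch is a reasonable reconstruction of why the statement follows from those sources, and it is organized the way one would expect: match the Eilenberg--MacLane factors of Sullivan's splitting of $\mm{G/PL}$ against $\mathscr{K}$, $\mathscr{K}_1$ and the degree-$4$ factor (so that the excluded degrees $2i$, $i=2^j-1$, are exactly the degrees $2^n-2$ of $\mathscr{K}_1$), and deduce the second assertion from the first by projecting a class $\tilde x$ with $i^\ast\tilde x=k_{2i}+\cdots$ onto its $C^\ast(\mm{PL})$-component, using that positive-degree Stiefel--Whitney classes restrict to zero on $\mm{G/PL}$; that last step is clean and correct. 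You also rightly observe that the statement must be read with $2i\ge 8$, since $i=2$ is not of the form $2^j-1$ yet $C^4(\mm{PL})=0$. However, you should be aware that at the decisive point --- the injectivity of $i_\ast$ on the factors $\Gamma(T)$ and $H_\ast(\mathscr{K})$ of (\ref{2localiso}) --- your argument, by your own admission, reduces to ``chasing the explicit maps in \cite{BMM1973}''; this is no more and no less than what the paper's bare citation does, so the real content remains imported rather than proved. One genuine imprecision: your identification of the classes killed in the quotient $\Gamma(T)=H_\ast(\mathscr{K}_1)//i_1$ as ``Stiefel--Whitney data absorbed by $H_\ast(\mm{BSO})$'' conflicts with your own observation that $i^\ast w_I=0$ for $|I|>0$, which dually forces the image of $i_\ast$ to meet $H_\ast(\mm{BSO})\otimes 1$ trivially in positive degrees; in \cite{BMM1973} the subalgebra $E(R[2])$ that dies is fed in from $H_\ast(\mm{SG})$ through the fibration $\mm{SG}\to\mm{G/PL}\to\mm{BSPL}$, not from $\mm{BSO}$. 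This does not break your argument (you only need \emph{some} splitting of the quotient onto $\Gamma(T)$), but the misattribution should be corrected if the sketch were written out in full.
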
 
\begin{corollary}
	$C^n(\mm{PL})\ne 0$ for $n\ge 12$.
\end{corollary}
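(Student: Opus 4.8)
The plan is to deduce the corollary directly from the vector-space isomorphism $C^\ast(\mm{PL})\cong \mathscr{V}\otimes H_\ast(\mathscr{K})$ recorded just before Lemma \ref{VdePro}, reducing everything to a single well-understood tensor factor. First I would observe that, since $\mathscr{V}\otimes H_\ast(\mathscr{K})$ is a tensor product of graded $\mb{Z}_2$-vector spaces, in degree $n$ one has $\dim_{\mb{Z}_2} C^n(\mm{PL})\ge \dim_{\mb{Z}_2} H_n(\mm{K}(\mb{Z},8))$, obtained by tensoring a class of $H_n(\mm{K}(\mb{Z},8))$ with the degree-zero element in every other factor. Here $\mm{K}(\mb{Z},8)$ is the $n=2$ factor of $\mathscr{K}=\Pi_{n\ne 2^i}\mm{K}(\mb{Z}_2,4n-2)\times \Pi_{n>1}\mm{K}(\mb{Z},4n)$. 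Thus it suffices to prove $H_n(\mm{K}(\mb{Z},8))\ne 0$ for every $n\ge 12$, equivalently (by duality over the field $\mb{Z}_2$) $H^n(\mm{K}(\mb{Z},8))\ne 0$ for $n\ge 12$.

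Second, I would invoke Serre's computation: $H^\ast(\mm{K}(\mb{Z},8);\mb{Z}_2)$ is a polynomial algebra on the classes $\mm{Sq}^I\iota_8$, where $\iota_8$ is the fundamental class and $I$ ranges over the admissible sequences of excess $<8$ not ending in $\mm{Sq}^1$. In particular the single operations $\mm{Sq}^j\iota_8$ with $2\le j\le 7$ are polynomial generators in the six consecutive degrees $10,11,12,13,14,15$, and $\iota_8$ itself is a generator in degree $8$. This already gives $H^n\ne 0$ for $12\le n\le 15$.

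Third, I would run a short induction to cover $n\ge 16$: if $H^{n-8}(\mm{K}(\mb{Z},8))\ne 0$, then, choosing a nonzero monomial $\mu$ there, the product $\iota_8\cdot \mu$ is again nonzero because $H^\ast(\mm{K}(\mb{Z},8))$ is a genuine polynomial algebra, so no relation can kill a product of nonzero elements. Since $n-8\ge 12$ whenever $n\ge 20$, the base cases $12\le n\le 19$ start the induction: degrees $12,13,14,15$ are the generators above, while degrees $16,17,18,19$ are realized by $\iota_8^2$, the length-two generator $\mm{Sq}^6\mm{Sq}^3\iota_8$, and the products $\iota_8\cdot\mm{Sq}^2\iota_8$ and $\iota_8\cdot\mm{Sq}^3\iota_8$. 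The induction then propagates to all $n\ge 12$, and combined with the first step yields $C^n(\mm{PL})\ne 0$ for every $n\ge 12$; in fact the same argument gives nonvanishing for all $n\ge 8$ except possibly $n=9$, which is in turn supplied by the generator of $\Gamma(T)_9$ from Lemma \ref{VdePro}.

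The only real subtlety — and my reason for avoiding the $\mm{G/PL}$ detection of Lemma \ref{K2primehomology} as the primary tool — is that the converse part of that lemma produces nonzero classes only in the even degrees $2i$ with $i\ne 2^j-1$, since the detecting classes $k_{2i}$ are even-dimensional and their products stay even-dimensional; such an argument therefore misses every odd degree (and the even degrees $n=14,30,\dots$), whereas the tensor-factor viewpoint supplies classes in all degrees uniformly. If one wants the geometric content retained, the first part of Lemma \ref{K2primehomology} guarantees that $\Gamma(T)\otimes H_\ast(\mathscr{K})$ — which already contains every class used above — injects into $H_\ast(\mm{BSPL})$, confirming that these are genuine nonzero classes rather than artifacts of the algebraic splitting. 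The substantive work then reduces to the routine Serre computation, which poses no obstacle.
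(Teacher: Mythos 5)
Your argument is correct, but it is not the route the paper takes. The paper's proof works on the other side of the splitting: it uses the second half of Lemma \ref{K2primehomology} to produce, for each $i\ge 2$, a class $x_{4i}\in C^{4i}(\mm{PL})$ detected by the polynomial generator $k_{4i}$ in $H^\ast(\mm{G/PL})$, and then applies $\mm{Sq}^k$ for $2\le k\le 5$ to get nonzero classes in degrees $4i+2,\dots,4i+5$; as $i$ runs over $i\ge 2$ these windows cover every $n\ge 10$, odd degrees included. So your stated reason for avoiding that route (that detection by the even-dimensional $k_{2i}$ misses odd degrees and degrees like $14$) only applies to the lemma used in isolation, not to the lemma combined with Steenrod operations as the paper actually uses it. Your alternative instead reads the lower bound $\dim C^n(\mm{PL})\ge \dim H_n(\mm{K}(\mb{Z},8))$ directly off the Brumfiel--Madsen--Milgram vector-space decomposition $C^\ast(\mm{PL})\cong \mathscr{V}\otimes H_\ast(\mathscr{K})$ recorded before Lemma \ref{VdePro}, and reduces everything to Serre's description of $H^\ast(\mm{K}(\mb{Z},8);\mb{Z}_2)$ as a polynomial algebra; the base cases $12\le n\le 19$ and the multiplication-by-$\iota_8$ induction are all checked correctly (degree $9$ being the one genuine gap below $12$, as you note). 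Both arguments ultimately rest on the same structural input from \cite{BMM1973}; yours is a pure dimension count and arguably more self-contained at this point of the paper, while the paper's version has the side benefit of exhibiting the nonzero classes as Steenrod operations on geometrically detected elements, which is the kind of $\mathscr{A}$-module information exploited later (e.g.\ in Proposition \ref{C10}). Either proof establishes the corollary.
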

\begin{proof}
	Note that $4i\ne 2^{j+1}-2$. Let $i\ge 2$. By Lemma \ref{K2primehomology}, there exists an element $x_{4i}\in C^{4i}(\mm{PL})$ such that its image is $k_{2i}+ \cdots$. Applying the Steenrod operations, we have $\mm{Sq}^kx_{4i}\ne 0$, thus $C^{4i+k}(\mm{PL})\ne 0$ for any $i\ge 2$, $2\le k\le 5$. 
\end{proof}

From \cite{Wall1964}, $C^8(\mm{PL})=\mb{Z}_2$, $C^9(\mm{PL})=\mb{Z}_2^2$.
Let $p_8$ be the generator of $C^8(\mm{PL})$. By Lemma \ref{K2primehomology}, we have
\begin{lemma}
 $\mm{Sq}^1p_8= 0,$	$\mm{Sq}^ip_8\ne 0$ for $2\le i\le 8$.
\end{lemma}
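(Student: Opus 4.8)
The plan is to detect everything through the natural $\mathscr A$-algebra map $\phi\colon H^\ast(\mm{BSPL})\to H^\ast(\mm{G/PL})$ of Lemma \ref{K2primehomology}, on which the Steenrod action is transparent. Since $4\ne 2^j-1$, the second part of Lemma \ref{K2primehomology} produces an element of $C^8(\mm{PL})$ whose image is $k_8+\cdots$; as $C^8(\mm{PL})=\mb{Z}_2$ is generated by $p_8$, this element must be $p_8$, so $\phi(p_8)=k_8+D$ with $D$ a sum of products of lower-degree generators. Here $k_8$ is the fundamental class $\ell_8$ of the integral factor $\mm{K}(\mb{Z},8)=\mm{K}(\mb{Z},4\cdot1+4)$ of the Sullivan splitting, and $H^\ast(\mm{G/PL})$ is a polynomial algebra on the classes $\mm{Sq}^I\ell$. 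By Serre's computation of $H^\ast(\mm{K}(\mb{Z},8))$, the element $\mm{Sq}^i\ell_8$ is a polynomial generator, hence a nonzero indecomposable, for $2\le i\le 7$, while $\mm{Sq}^8\ell_8=\ell_8^2$ and $\mm{Sq}^1\ell_8=0$.

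For $2\le i\le 7$ I would apply $\mm{Sq}^i$ to $\phi(p_8)=k_8+D$: by the Cartan formula $\mm{Sq}^iD$ remains decomposable, whereas $\mm{Sq}^ik_8=\mm{Sq}^i\ell_8$ is a nonzero indecomposable, so $\phi(\mm{Sq}^ip_8)=\mm{Sq}^i\ell_8+(\text{decomposable})$ is nonzero, since its class in the module of indecomposables is nonzero. As $\phi$ commutes with $\mm{Sq}^i$, this forces $\mm{Sq}^ip_8\ne 0$. For $i=8$ the top square gives $\mm{Sq}^8p_8=p_8^2$; here I would instead use that squaring is injective on the polynomial algebra $H^\ast(\mm{G/PL})$, so from $\phi(p_8)=k_8+D\ne 0$ we obtain $\phi(p_8^2)=\phi(p_8)^2\ne 0$, whence $\mm{Sq}^8p_8=p_8^2\ne 0$.

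The case $\mm{Sq}^1p_8=0$ is the one I expect to be the main obstacle, because $\phi$ is not injective (and $\mm{Sq}^1D$ need not vanish), so passing to $\mm{G/PL}$ cannot establish a vanishing statement. Instead I would argue by integrality. Writing $C_\ast(\mm{PL})$ for the graded dual of $C^\ast(\mm{PL})$, the identification $C^\ast(\mm{PL})\cong\mathscr V\otimes H_\ast(\mathscr K)$ together with $\mathscr V_i=0$ for $1\le i\le 8$ (Lemma \ref{VdePro} and Theorem \ref{CastPL}(1)) shows that $C_8(\mm{PL})=H_8(\mm{K}(\mb{Z},8);\mb{Z}_2)=\mb{Z}_2$, generated by the class $a_8$ dual to $p_8$. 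The crux is then to check that $a_8$ is the mod $2$ reduction of a non-torsion integral class in $H_8(\mm{BSPL};\mb{Z})$ — equivalently, that the factor $\mm{K}(\mb{Z},8)$ of (\ref{2localiso}) contributes a free summand surviving the Bockstein spectral sequence. Granting this, $a_8$ is not a boundary, so $a_8\notin\mm{im}(\mm{Sq}^1_\ast\colon C_9(\mm{PL})\to C_8(\mm{PL}))$, and from $\langle \mm{Sq}^1p_8,b\rangle=\langle p_8,\mm{Sq}^1_\ast b\rangle$ we conclude that $\mm{Sq}^1p_8$ pairs trivially with all of $C_9(\mm{PL})$, hence $\mm{Sq}^1p_8=0$; equivalently, $p_8$ is the reduction of an integral class, whose Bockstein vanishes.
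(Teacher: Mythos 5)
Your handling of the non-vanishing half, $\mm{Sq}^ip_8\ne 0$ for $2\le i\le 8$, is essentially the paper's own (one-line) argument via Lemma \ref{K2primehomology}, carried out correctly: $\phi(p_8)=k_8+\cdots$ with $k_8=\ell_8$ from the integral factor $\mm{K}(\mb{Z},8)$ of the Sullivan splitting, detection on indecomposables for $2\le i\le 7$, and the Frobenius for $i=8$. One small repair: Lemma \ref{K2primehomology} does not assert that the error term $D$ is decomposable (in degree $8$ it could a priori contain an indecomposable such as $\mm{Sq}^2k_6$), so the step ``$\mm{Sq}^iD$ remains decomposable'' is not justified as written; the argument survives because $Q(H^\ast(\mm{G/PL}))$ splits as a direct sum over the Sullivan factors and the component of $\mm{Sq}^i\phi(p_8)$ in $Q(H^\ast(\mm{K}(\mb{Z},8)))$ is exactly $[\mm{Sq}^i\ell_8]\ne 0$.

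The genuine gap is in the half you yourself flag as the crux, $\mm{Sq}^1p_8=0$. The dualization is fine: since $C_8(\mm{PL})=\mb{Z}_2\langle a_8\rangle$, it suffices that $a_8\notin\mm{im}(\mm{Sq}^1_\ast\colon C_9\to C_8)$. But the input you need for this --- that $a_8$ is the mod $2$ reduction of a non-torsion (and $2$-primitive) class in $H_8(\mm{BSPL};\mb{Z})$ --- is neither proved nor deducible from what you cite. The isomorphism (\ref{2localiso}) is only an isomorphism of mod $2$ homology Hopf algebras, so it says nothing about which integral summands the $\mm{K}(\mb{Z},8)$ factor contributes; and since $\mm{PL/O}$ has finite homotopy groups, $H_8(\mm{BSPL};\mb{Q})\cong H_8(\mm{BSO};\mb{Q})$ has rank $2$, so your claim amounts to the nontrivial assertion that one of the two already-existing rational classes reduces mod $2$ precisely to $a_8$ rather than into the $H_8(\mm{BSO};\mb{Z}_2)$ tensor factor. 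A priori $a_8$ could instead be the reduction of an order-$2$ integral class, in which case $\mm{Sq}^1p_8$ would be nonzero; nothing in your argument excludes this. A route that closes the gap with tools already in the paper is the one used in Lemma \ref{d2p9SPL} for $p_{9,1}$: Williamson's computation $\Omega_8^{\mm{SPL}}=\mb{Z}^2\oplus\mb{Z}_4$ versus $\Omega_8^{\mm{SO}}=\mb{Z}^2$ forces the extra $\mb{Z}_4$ to be carried by a nontrivial $h_0$-tower on $p_8$ in the Adams spectral sequence for $\mm{MSPL}$; if $\mm{Sq}^1p_8$ were a nonzero combination of the $p_{9,j}$, that generator would become $\mathscr{A}$-decomposable in $H^\ast(\mm{MSPL})$, the tower over $p_8$ would collapse to a single $\mb{Z}_2$, and the $\mb{Z}_4$ could not appear. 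That argument, rather than an integral lift of $a_8$, is what actually pins down $\mm{Sq}^1p_8=0$.
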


For some elements of $C^\ast(\mm{PL})$, we can not directly get the $\mathscr A$ actions by Lemma \ref{K2primehomology}. 
Then, we will use the Adams spectral sequence (ASS) for ${_2}\pi_\ast(\mm{MSPL})$ to detect these $\mathscr A$ actions on $C^\ast(\mm{PL})$. 

\begin{lemma}\label{d2p9SPL}
Let $p_{9,1}$, $ p_{9,2}$ denote two generators of $C^9(\mm{PL})$. Then, $\mm{Sq}^1p_{9,1}=0$, $\mm{Sq}^1p_{9,2}\ne 0$.
\end{lemma}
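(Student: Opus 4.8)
The plan is to translate the statement into a computation on the $E_2$-page of the mod $2$ Adams spectral sequence for $\mm{MSPL}$ and then read off the rank of $\mm{Sq}^1$ from the $h_0$-tower structure. Recall that $E_2^{s,t}=\mathrm{Ext}_{\mathscr A}^{s,t}(H^\ast(\mm{MSPL}),\mb{Z}_2)$ converges to ${_2}\pi_{t-s}(\mm{MSPL})={_2}\Omega_{t-s}^{\mm{SPL}}$, and that restricting $H^\ast(\mm{MSPL})$ to the sub-Hopf-algebra $\mathscr A(0)=E(\mm{Sq}^1)$ controls the $h_0$-multiplications on $E_2$. Since $\mathscr A(0)$ is self-injective, every $\mathscr A(0)$-module splits into free summands and trivial summands, which gives the dictionary I would use: a generator $x$ with $\mm{Sq}^1x=0$ not in the image of $\mm{Sq}^1$ spans a trivial summand and hence supports an infinite $h_0$-tower, whereas a generator $x$ with $\mm{Sq}^1x\ne 0$ spans, together with $\mm{Sq}^1x$, a free summand and hence carries no $h_0$-tower, while $\mm{Sq}^1x$ itself contributes no class to $\mathrm{Ext}^0$. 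Because $C^i(\mm{PL})=0$ for $i<8$, the two generators of $C^9(\mm{PL})$ are $\mathscr A$-indecomposable and, via the Thom isomorphism, give two classes in $\mathrm{Ext}^{0,9}$; the lemma is therefore equivalent to the assertion that exactly one of these two classes supports an $h_0$-tower, i.e. that $\mm{Sq}^1\colon C^9(\mm{PL})\to C^{10}(\mm{PL})$ has rank one.

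Next I would pin down the relevant portion of the $E_2$-page in stems $8$, $9$, $10$. Using the Thom isomorphism together with the splitting (\ref{2localiso}), I would locate the classes coming from $p_8$, from $p_{9,1},p_{9,2}$, and from the three generators of $C^{10}(\mm{PL})$, while keeping track of the classes contributed by the $H_\ast(\mm{BSO})$-factor. Here the already established facts $\mm{Sq}^1p_8=0$ and $\mm{Sq}^ip_8\ne 0$ for $2\le i\le 8$ are exactly what is needed: they guarantee that no degree-$9$ class is hit by $\mm{Sq}^1$ from $p_8$, so both degree-$9$ generators are genuine bottom classes whose $h_0$-behaviour faithfully records the value of $\mm{Sq}^1$ on them. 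I would also record, via Lemma \ref{VdePro}, that one of the two generators originates in $\Gamma(T)$ (the $\mm{K}(\mb{Z}_2,\cdot)$-part) and the other in $V^{(1)}(K[3]\oplus R\langle 4\rangle)$ (the $\mm{K}(\mb{Z},4)$-part); this is the structural reason to expect the integral Bockstein phenomenon $\mm{Sq}^1$ to be nonzero on precisely one of them.

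The final step is to force rank one by comparison with homotopy. I would count the $h_0$-towers predicted in stems $9$ and $10$ under each of the three possibilities $\mathrm{rank}\,\mm{Sq}^1\in\{0,1,2\}$ and match them against the coarse groups known from Williamson \cite{Will1966}, against the baseline contributed by $\mm{MSO}$ (where the $\mm{Sq}^1$-action is classical), and against the order constraint that $\Omega_{10}^{\mm{SPL}}$ is annihilated by $2$ while $\Omega_9^{\mm{SPL}}$ has free rank $0$. The configuration in which both degree-$9$ generators kill a stem-$10$ class, and the one in which neither does, each produce the wrong number of order-two summands in stems $9$ and $10$ simultaneously; only rank one is consistent. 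This forces $\mm{Sq}^1\colon C^9(\mm{PL})\to C^{10}(\mm{PL})$ to have rank exactly one, after which I simply name $p_{9,1}$ a generator of the kernel and $p_{9,2}$ a generator outside it.

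The step I expect to be the main obstacle is the precise determination of $E_2$ in stems $9$ and $10$. Because the splitting (\ref{2localiso}) is only an isomorphism of $\mathscr A$-modules through the twisted action (\ref{rightmodule}), the Ext of the tensor product does not factor as a tensor product of Ext groups, so the contributions of the $H_\ast(\mm{BSO})$-factor must be carefully separated from those of $C^\ast(\mm{PL})$. One must also verify that no Adams differential or hidden $h_0$-extension in this range mimics or masks a tower, so that the tower count genuinely reads off $\mm{Sq}^1$ rather than a mixture of $\mm{Sq}^1$ and higher differentials; controlling these differentials with the known values of $\mm{Sq}^ip_8$, and arranging the comparison so that it uses only Williamson's partial data rather than the refined groups proved later, is the technical heart of the argument and the place where circularity must be avoided.
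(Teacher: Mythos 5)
Your overall strategy -- pass to the Adams spectral sequence for $\mm{MSPL}$, use the dictionary between $\mm{Sq}^1$ and $h_0$-towers, and force the answer by comparison with Williamson's bordism groups -- is the same mechanism the paper uses, and the $\mathscr A(0)$-dictionary you set up is sound. But as written the argument has a genuine gap in the final step, and it is precisely at the point you flagged as the ``place where circularity must be avoided.'' Your case analysis on $\mathrm{rank}\,\mm{Sq}^1\in\{0,1,2\}$ leans on the assertion that $\Omega_{10}^{\mm{SPL}}$ is annihilated by $2$. Williamson only gives $\Omega_{10}^{\mm{SPL}}=\mb{Z}_2\oplus \mm{2gp}$ with $\mm{2gp}$ a $2$-group of unknown exponent; the refinement $\Omega_{10}^{\mm{SPL}}=\mb{Z}_2\oplus\mb{Z}_2^2$ is proved later in the paper \emph{from} Proposition \ref{C10}, which in turn rests on this lemma. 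So the stem-$10$ leg of your comparison is circular and cannot be used.

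The input you are missing is stem $8$: Williamson's $\Omega_8^{\mm{SPL}}=\mb{Z}^2\oplus\mb{Z}_4$ versus $\Omega_8^{\mm{SO}}=\mb{Z}^2$. After establishing $\mm{Ext}_{\mathscr A}^{s,s+7}(H^\ast(\mm{MSPL}),\mb{Z}_2)=0$ for $s>0$ (the paper does this with the AAHSS built from the filtration $N^n=H^\ast(\mm{MSO})\otimes C^{\ge n}(\mm{PL})$ together with Pengelley's computation of $\mm{Ext}$ for $\mm{MSO}$ -- this is the careful separation of the $\mm{MSO}$-factor that you correctly anticipated would be needed), the $\mb{Z}_4$ summand forces the $h_0$-tower on $p_8$ to be truncated at height $2$, and the only possible killer is $d_2(p_{9,1})=h_0^2p_8$. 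Since $d_2$ is $h_0$-linear and $h_0^{s+2}p_8\ne 0$ at $E_2$ for all $s$, the source must itself carry an infinite $h_0$-tower, i.e.\ $\mm{Sq}^1p_{9,1}=0$; this also disposes of the entire tower on $p_{9,1}$. The claim $\mm{Sq}^1p_{9,2}\ne 0$ then follows from stem $9$ exactly as you intend: $\Omega_9^{\mm{SPL}}=\mb{Z}_2^3$ has free rank $0$ and exceeds $\Omega_9^{\mm{SO}}=\mb{Z}_2^2$ by a single $\mb{Z}_2$, and a surviving infinite $h_0$-tower on $p_{9,2}$ (which could not be truncated, since a $d_r$ would have to land in filtration $\ge 3$ of stem $8$ and would destroy the free rank there) is incompatible with this. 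So your proposal can be repaired, but only by replacing the stem-$10$/exponent-$2$ comparison with the stem-$8$ $\mb{Z}_4$ argument.
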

\begin{proof} 
 It is easy to see that 
$$h_0^sp_8\in \mm{Ext}_{\mathscr A}^{s,s+8}(H^\ast(\mm{MSPL};\mb{Z}_2),\mb{Z}_2)\quad s\ge 0$$
$$p_{9,1},p_{9,2}\in \mm{Ext}_{\mathscr A}^{0,9}(H^\ast(\mm{MSPL};\mb{Z}_2),\mb{Z}_2).$$  
 Let $N^n=H^\ast(\mm{MSO})\otimes C^{\ge n}(\mm{PL})$. Applying the functor $\mm{Ext}_{\mathscr{A}}^{\ast,\ast}(-,\mb{Z}_2)$ on the short exact sequence $$0\to N^{n+1}\to N^n\to H^\ast(\mm{MSO})\otimes C^{ n}(\mm{PL})\to 0$$
 we get a long exact sequence for the $\mm{Ext}$-groups, thus an exact couple that establishes   
 the spectral sequence (AAHSS)
$$E_1^{s,m,n}(\mm{SPL})=\mm{Ext}_{\mathscr{A}}^{s,s+m}(H^\ast(\mm{MSO};\mb{Z}_2),\mb{Z}_2)\otimes C^n(\mm{PL})$$
\begin{equation}
	E_1^{s,m,n}(\mm{SPL})\Longrightarrow \mm{Ext}_{\mathscr{A}}^{s,s+m+n}(H^\ast(\mm{MSPL};\mb{Z}_2),\mb{Z}_2) \label{AAHSSSPL}
\end{equation}
Combining the results for $\mm{Ext}_{\mathscr A}^{s,t}(H^\ast(\mm{MSO}),\mb{Z}_2)$  \cite{Pen1982} and the above spectral sequence, we have
$\mm{Ext}_{\mathscr A}^{s,s+7}(H^\ast(\mm{MSPL};\mb{Z}_2),\mb{Z}_2)=0$ for $s>0$. 

In the Adams spectral sequence
\begin{equation}
\mm{Ext}_{\mathscr A}^{s,t}(H^\ast(\mm{MSPL};\mb{Z}_2),\mb{Z}_2)\Longrightarrow {_2\pi}_\ast(\mm{MSPL}),	\label{ASSMSPL}
\end{equation}
$d_r(p_8)=0$ for any $r\ge 2$. By $\Omega_8^{\mm{SPL}}=\mb{Z}^2\oplus\mb{Z}_4$ \cite{Will1966} and $\Omega_8^{\mm{SO}}=\mb{Z}^2$, we have $d_2(p_{9,1})=h_0^2p_8$ and $\mm{Sq}^1p_{9,1}=0$. By $\Omega_9^{\mm{SPL}}=\mb{Z}_2^3$ \cite{Will1966} and $\Omega_9^{\mm{SO}}=\mb{Z}_2^2$, we have that $\mm{Sq}^1p_{9,2}\ne 0$, and $p_{9,2}$ survives to the $E_\infty$-term.
\end{proof}

\begin{proposition}\label{C10}
	$C^{10}(\mm{PL})=\mb{Z}_2\langle p_{10,1}\rangle\oplus\mb{Z}_2\langle p_{10,2}\rangle \oplus\mb{Z}_2\langle p_{10,3}\rangle$ where $\mm{Sq}^2p_{8}=p_{10,1}$, $\mm{Sq}^1p_{9,2}=p_{10,2}$, and $\mm{Sq^1}p_{10,3}\ne 0$. 
\end{proposition}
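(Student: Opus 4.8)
The plan is to reduce the statement to a dimension count together with the determination of three explicit generators and their first Steenrod squares.

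First I would pin down $\dim_{\mb{Z}_2}C^{10}(\mm{PL})=3$ by combining Lemma \ref{VdePro} with the vector-space isomorphism $C^\ast(\mm{PL})\cong\mathscr{V}\otimes H_\ast(\mathscr{K})$. Since $\mm{K}(\mb{Z},8)$ and $\mm{K}(\mb{Z}_2,10)$ are the only Eilenberg--MacLane factors of $\mathscr{K}$ contributing below degree $11$, one gets $H_0(\mathscr{K})=\mb{Z}_2$, $H_j(\mathscr{K})=0$ for $1\le j\le 7$ and $j=9$, $H_8(\mathscr{K})=\mb{Z}_2$ (the fundamental class of $\mm{K}(\mb{Z},8)$), and $H_{10}(\mathscr{K})=\mb{Z}_2^2$ (spanned by the class dual to $\mm{Sq}^2\ell_8$ and the fundamental class of $\mm{K}(\mb{Z}_2,10)$; no products occur, the lowest factors being in degrees $8$ and $10$). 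Because $C^i(\mm{PL})=0$ for $1\le i\le 7$ forces $\mathscr{V}_i=0$ in that range, the only surviving summands of $\bigoplus_{i+j=10}\mathscr{V}_i\otimes H_j(\mathscr{K})$ are $\mathscr{V}_0\otimes H_{10}(\mathscr{K})\cong\mb{Z}_2^2$ and $\mathscr{V}_{10}\otimes H_0(\mathscr{K})\cong\mb{Z}_2$, so $C^{10}(\mm{PL})\cong\mb{Z}_2^3$.

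Next I would exhibit two generators as squares of the known classes. The class $p_{10,1}:=\mm{Sq}^2 p_8$ is nonzero by the lemma giving $\mm{Sq}^i p_8\ne0$ for $2\le i\le 8$, and $p_{10,2}:=\mm{Sq}^1 p_{9,2}$ is nonzero by Lemma \ref{d2p9SPL}. Their independence is immediate from the Adem relations: $\mm{Sq}^1 p_{10,1}=\mm{Sq}^1\mm{Sq}^2 p_8=\mm{Sq}^3 p_8\ne0$ (again by the $p_8$-lemma), whereas $\mm{Sq}^1 p_{10,2}=\mm{Sq}^1\mm{Sq}^1 p_{9,2}=0$; two distinct nonzero classes over $\mb{Z}_2$ are linearly independent.

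It then remains to produce $p_{10,3}$ with $\mm{Sq}^1 p_{10,3}\ne0$ lying outside $\mathrm{span}(p_{10,1},p_{10,2})$. Since $5\ne 2^j-1$, Lemma \ref{K2primehomology} supplies $p_{10,3}=x_{10}\in C^{10}(\mm{PL})$ whose image under $H^\ast(\mm{BSPL})\to H^\ast(\mm{G/PL})$ is $k_{10}+\cdots$, where $k_{10}$ is the fundamental class of the Sullivan factor $\mm{K}(\mb{Z}_2,10)$; as $\mm{Sq}^1 k_{10}$ is its nonzero Bockstein, $\mm{Sq}^1 p_{10,3}$ maps to $\mm{Sq}^1 k_{10}+\cdots\ne0$ in the indecomposable quotient $QH^{11}(\mm{G/PL})$, so $\mm{Sq}^1 p_{10,3}\ne0$. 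To finish independence I would use two coordinates on indecomposables: in $QH^{10}(\mm{G/PL})$ the $k_{10}$-coordinate of $p_{10,3}$ is $1$ (leading term) while that of $p_{10,1}=\mm{Sq}^2 p_8\mapsto \mm{Sq}^2 k_8+\mathrm{decomposables}$ is $0$, so $p_{10,1},p_{10,3}$ are independent. Moreover $\mm{Sq}^1 p_{10,1}\mapsto\mm{Sq}^3 k_8$ and $\mm{Sq}^1 p_{10,3}\mapsto\mm{Sq}^1 k_{10}$ lie in the distinct factors $\mm{K}(\mb{Z},8)$ and $\mm{K}(\mb{Z}_2,10)$, whence $\mm{Sq}^1$ is nonzero on $p_{10,1}$, $p_{10,3}$, and $p_{10,1}+p_{10,3}$ alike; since $\mm{Sq}^1 p_{10,2}=0$, the class $p_{10,2}$ avoids every nonzero element of $\mathrm{span}(p_{10,1},p_{10,3})$, so the three form a basis.

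The step I expect to be the main obstacle is this last independence argument: controlling the unspecified lower-order terms packaged in the ``$\cdots$'' of Lemma \ref{K2primehomology} and verifying that $\mm{Sq}^2 k_8$ and $k_{10}$ (and their squares $\mm{Sq}^3 k_8$, $\mm{Sq}^1 k_{10}$) really persist as independent indecomposables in $H^\ast(\mm{G/PL})$. Should the $\mm{G/PL}$ bookkeeping prove delicate, the alternative is to detect the classes and the relation $\mm{Sq}^1 p_{10,3}\ne0$ directly inside the Adams spectral sequence (\ref{ASSMSPL}) through the algebraic spectral sequence (\ref{AAHSSSPL}), exactly as in the proof of Lemma \ref{d2p9SPL}, by comparing $\mm{Ext}_{\mathscr A}$ for $\mm{MSPL}$ and $\mm{MSO}$ in degrees $10$ and $11$; one must however take care not to invoke the refined value of $\Omega_{10}^{\mm{SPL}}$, whose computation depends on the present result.
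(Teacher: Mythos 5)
Your proposal is correct and follows essentially the same route as the paper: the dimension count via Lemma \ref{VdePro} and the decomposition $C^\ast(\mm{PL})\cong\mathscr{V}\otimes H_\ast(\mathscr{K})$, the generators $\mm{Sq}^2p_8$ and $\mm{Sq}^1p_{9,2}$, and a third generator supplied by Lemma \ref{K2primehomology} corresponding to $\ell_{10}\in H^{10}(\mm{K}(\mb{Z}_2,10))$ with nonzero Bockstein. The only (harmless) variation is in the final independence bookkeeping, which you run through $\mm{Sq}^1$ and leading terms in $QH^\ast(\mm{G/PL})$ where the paper instead matches the duals of the three classes directly with the summands $\mathscr{V}_{10}\oplus H^{10}(\mathscr{K})$.
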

\begin{proof}
Since $\mm{Sq}^2p_{8}\ne 0$ and $\mm{Sq}^1p_{9,2}\ne 0$, $C^{10}(\mm{PL})$ has generators so that $p_{10,1}=\mm{Sq}^2p_{8}$ and $p_{10,2}=\mm{Sq}^1p_{9,2}$. By Lemma \ref{K2primehomology}, we see that $p_{10,1}$ corresponds to $\mm{Sq}^2\ell_8\in H^{10}(\mm{K}(\mb{Z},8))\subset H^{10}(\mathscr{K})$. By Lemma \ref{K2primehomology}, $C^{10}(\mm{PL})$ has another generator $c_{10}$ that corresponds to $\ell_{10}\in H^{10}(\mm{K}(\mb{Z}_2,10))\subset H^{10}(\mathscr{K})$. Thus, $\mm{Sq}^1c_{10}\ne 0$. Since $\mm{Sq}^1p_{10,2}=\mm{Sq}^1\mm{Sq}^1p_{9,2}=0$, $c_{10}\ne p_{10,2}$. Moreover, the dual of $p_{10,2}$ is in $\mathscr{V}_{10}=\mb{Z}_2$ by Lemma \ref{VdePro}. 
\end{proof}

\begin{proposition}
	$\Omega_{10}^{\mm{SPL}}=\mb{Z}_2\oplus \mb{Z}_2^2$.
\end{proposition}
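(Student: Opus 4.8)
The plan is to compute the $2$-primary part of $\Omega_{10}^{\mm{SPL}}={_2\pi}_{10}(\mm{MSPL})$ via the Adams spectral sequence $(\ref{ASSMSPL})$, and to dispose of the odd-primary part using Williamson's $\Omega_{10}^{\mm{SPL}}=\mb{Z}_2\oplus\mm{2gp}$ \cite{Will1966}, which already exhibits no odd torsion in this dimension. Thus it suffices to show that the $2$-local homotopy is $\mb{Z}_2^3$, all of exponent $2$.

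First I would assemble the $E_2$-term $\mm{Ext}_{\mathscr A}(H^\ast(\mm{MSPL}),\mb{Z}_2)$ in the stem $t-s=10$ from the spectral sequence $(\ref{AAHSSSPL})$, whose input is $\mm{Ext}_{\mathscr A}(H^\ast(\mm{MSO}),\mb{Z}_2)$ (from \cite{Pen1982}) tensored with $C^\ast(\mm{PL})$ (Theorem \ref{CastPL}). Because $\Omega_1^{\mm{SO}}=\Omega_2^{\mm{SO}}=0$, the summands $C^8(\mm{PL})$ and $C^9(\mm{PL})$ contribute nothing in total degree $10$; the only contributions come from $C^{10}(\mm{PL})\otimes\mm{Ext}^{s,s}(\mm{MSO})$ and from $C^0\otimes\mm{Ext}(\mm{MSO})$ in the stem-$10$ part of $\mm{MSO}$, that is from $\Omega_{10}^{\mm{SO}}$. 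I would verify from the $2$-local wedge splitting of $\mm{MSO}$ into copies of $H\mb{Z}$ and $H\mb{Z}_2$, counting the $H\mb{Z}_2$-summands against $\dim H_{10}(\mm{MSO};\mb{Z}_2)=12$ (partitions of $10$ into parts $\ge 2$), that $\Omega_{10}^{\mm{SO}}=\mb{Z}_2$.

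The heart of the argument is to determine which classes of $H^{10}(\mm{MSPL})=H^\ast(\mm{MSO})\otimes C^\ast(\mm{PL})$ are $\mathscr A$-indecomposable, using Proposition \ref{C10}. I expect exactly three independent $\mm{Ext}^{0,10}$-generators: the class of $\Omega_{10}^{\mm{SO}}$; the class $U\otimes p_{10,3}$, where $U$ is the Thom class; and the class $U\otimes p_{10,1}$. The last point is the delicate one. Although $p_{10,1}=\mm{Sq}^2 p_8$ in $C^\ast(\mm{PL})$, the twisted action $(\ref{rightmodule})$ together with $\mm{Sq}^2U=w_2U$ and $\mm{Sq}^1 p_8=0$ gives $\mm{Sq}^2(U\otimes p_8)=w_2U\otimes p_8+U\otimes p_{10,1}$, so the two-dimensional space $\langle U\otimes p_{10,1},\,w_2U\otimes p_8\rangle$ meets the decomposables only in the line through $\mm{Sq}^2(U\otimes p_8)$; hence $U\otimes p_{10,1}$ survives as a genuine generator rather than being cancelled. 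By contrast $p_{10,2}=\mm{Sq}^1 p_{9,2}$ makes $U\otimes p_{10,2}=\mm{Sq}^1(U\otimes p_{9,2})$ decomposable, so it yields no new generator (it merely records that $p_{9,2}$ is a $\mb{Z}_2$ in stem $9$). For each of the three surviving generators $x$ the relation $\mm{Sq}^1 x\ne 0$ holds: $\mm{Sq}^1 p_{10,1}=\mm{Sq}^3 p_8\ne 0$, $\mm{Sq}^1 p_{10,3}\ne 0$, and the $\Omega_{10}^{\mm{SO}}$-class generates a free $\mathscr A$-summand. This forces $h_0 x=0$ already on $E_2$, so each contributes a single $\mb{Z}_2$ and no $h_0$-tower.

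Finally I would check that nothing disturbs these classes: they lie in Adams filtration $0$, so they cannot be hit by any differential, and a count in stems $9$ and $11$ (controlled by Lemma \ref{d2p9SPL} and the pattern $d_2(p_{9,1})=h_0^2p_8$) shows they support no differential; since they exhaust stem $10$ and sit in filtration $0$, there is no higher-filtration target for a hidden $h_0$-extension, so every class has order exactly $2$. Hence the $2$-local group is $\mb{Z}_2^3$, and combined with Williamson's decomposition $\Omega_{10}^{\mm{SPL}}=\mb{Z}_2\oplus\mb{Z}_2^2$. The main obstacle is the indecomposability analysis of the third paragraph — in particular establishing that $U\otimes p_{10,1}$ is not cancelled by the Thom-class correction and that all three generators are of exponent $2$, so that the answer is $\mb{Z}_2^3$ rather than a group containing a $\mb{Z}_4$.
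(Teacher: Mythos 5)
Your proposal is correct and follows essentially the same route as the paper: compute the stem-10 part of $\mm{Ext}_{\mathscr A}(H^\ast(\mm{MSPL}),\mb{Z}_2)$ via the AAHSS $(\ref{AAHSSSPL})$, identify the filtration-zero generators coming from $\Omega_{10}^{\mm{SO}}$ together with $p_{10,1}$ and $p_{10,3}$, and use $\mm{Sq}^1\ne 0$ on each to rule out $h_0$-towers (hence any $\mb{Z}_4$), with Williamson's result disposing of odd torsion. Your treatment is simply more explicit than the paper's about the indecomposability bookkeeping (the $U\otimes p_{10,1}$ versus $w_2U\otimes p_8$ correction and the exclusion of $p_{10,2}$), which the paper leaves implicit in Proposition \ref{C10}.
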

\begin{proof}
	By the AAHSS (\ref{AAHSSSPL}), $\mm{Ext}_{\mathscr A}^{s,s+9}(H^\ast(\mm{MSPL}),\mb{Z}_2)=0$ for any $s>0$. By Proposition \ref{C10}, $p_{10,1}$, $p_{10,3}\in \mm{Ext}_{\mathscr A}^{0,10}(H^\ast(\mm{MSPL}),\mb{Z}_2)$, thus survive to the $E_\infty^{0,10}$ in the ASS (\ref{ASSMSPL}). Since $\mm{Sq}^1p_{10,1}\ne 0$ and $\mm{Sq}^1p_{10,3}\ne 0$, we get the proposition.   
\end{proof}

\begin{proposition}\label{C11}
	$C^{11}(\mm{PL})=\mb{Z}_2\langle p_{11,1}\rangle\oplus\mb{Z}_2\langle p_{11,3}\rangle$ where $\mm{Sq}^1p_{10,1}=p_{11,1}$,  and $\mm{Sq^1}p_{10,3}=p_{11,3}$. 
\end{proposition}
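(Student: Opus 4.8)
The plan is to prove Proposition \ref{C11} by combining the dimension count from Lemma \ref{VdePro} with the explicit Steenrod action already computed in Proposition \ref{C10}. First I would record that $C^{11}(\mm{PL})$ is isomorphic as a $\mb{Z}_2$-vector space to $\mathscr{V}_{11}\oplus(\text{terms from }H_\ast(\mathscr{K}))$; but by Lemma \ref{VdePro} we have $\mathscr{V}_{11}=0$, and $H_{11}(\mathscr{K})$ contributes the remaining summands. The point is that $\dim_{\mb{Z}_2}C^{11}(\mm{PL})=2$ by Theorem \ref{CastPL}(3), so it suffices to exhibit two linearly independent elements and show they span. The two natural candidates are $\mm{Sq}^1p_{10,1}$ and $\mm{Sq}^1p_{10,3}$, where $p_{10,1}=\mm{Sq}^2p_8$ and $p_{10,3}$ is the generator of $C^{10}(\mm{PL})$ with $\mm{Sq}^1p_{10,3}\ne0$ from Proposition \ref{C10}.

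The key steps, in order, are as follows. First I would verify that $p_{11,1}:=\mm{Sq}^1p_{10,1}=\mm{Sq}^1\mm{Sq}^2p_8$ is nonzero; by the Adem relation $\mm{Sq}^1\mm{Sq}^2=\mm{Sq}^3$, this equals $\mm{Sq}^3p_8$, which is nonzero by the Lemma preceding Proposition \ref{C10} (stating $\mm{Sq}^ip_8\ne0$ for $2\le i\le 8$). Second, I would set $p_{11,3}:=\mm{Sq}^1p_{10,3}$, which is nonzero by the defining property of $p_{10,3}$ in Proposition \ref{C10}. Third, I must check that $\{p_{11,1},p_{11,3}\}$ are linearly independent: since $\mm{Sq}^1p_{11,3}=\mm{Sq}^1\mm{Sq}^1p_{10,3}=0$ while I would argue $\mm{Sq}^1p_{11,1}\ne0$ (as $\mm{Sq}^1\mm{Sq}^3p_8=\mm{Sq}^1\mm{Sq}^1\mm{Sq}^2p_8=0$—so this particular test fails and I must instead distinguish them by their images under Lemma \ref{K2primehomology} or by tracking which comes from $\Gamma(T)$ versus the $\mm{Sq}^2p_8$ tower). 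Concretely, $p_{11,1}$ lies in the image of the $\mm{Sq}$-action on the $p_8$-tower whereas $p_{11,3}$ comes from an element detected by $\ell_{10}\in H^{10}(\mm{K}(\mb{Z}_2,10))$, so their distinct origins under the natural homomorphism $H^\ast(\mm{BSPL})\to H^\ast(\mm{G/PL})$ force independence. Finally, since $\dim C^{11}(\mm{PL})=2$ and these two elements are independent, they form a basis, completing the proof.

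The main obstacle I anticipate is the linear independence step, precisely because the naive discriminator $\mm{Sq}^1$ annihilates both candidates (one because $\mm{Sq}^1\mm{Sq}^1=0$, the other because of the Adem collapse of $\mm{Sq}^1\mm{Sq}^3$). The clean way around this is to invoke Lemma \ref{K2primehomology} and the splitting $C^\ast(\mm{PL})\cong\mathscr{V}\otimes H_\ast(\mathscr{K})$: the element $p_{10,3}$ corresponds to $\ell_{10}$ in the Eilenberg–MacLane factor $\mm{K}(\mb{Z}_2,10)$, so $p_{11,3}=\mm{Sq}^1p_{10,3}$ corresponds to $\mm{Sq}^1\ell_{10}$ in $H^{11}(\mm{K}(\mb{Z}_2,10))$, a class that is manifestly not in the subalgebra generated by the image of $p_8$. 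By contrast $p_{11,1}=\mm{Sq}^3p_8$ lies in that $p_8$-generated part. Since these land in complementary summands under the isomorphism of Lemma \ref{VdePro} and the structure of $H_\ast(\mathscr{K})$, they are independent, and the dimension count from Theorem \ref{CastPL}(3) closes the argument.
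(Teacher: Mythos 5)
Your overall route is the same as the paper's: reduce to $H^{11}(\mathscr{K})$ using $\mathscr{V}_{11}=0$ from Lemma \ref{VdePro}, and identify the two generators as $\mm{Sq}^1p_{10,1}=\mm{Sq}^3p_8$ (detected by $\mm{Sq}^3\ell_8$ in the $\mm{K}(\mb{Z},8)$ factor) and $\mm{Sq}^1p_{10,3}$ (detected by $\mm{Sq}^1\ell_{10}$ in the $\mm{K}(\mb{Z}_2,10)$ factor). Your independence argument via the two distinct Eilenberg--MacLane factors is exactly what makes the paper's one-line conclusion ``the desired results follow from Proposition \ref{C10}'' work, and your observation that $\mm{Sq}^1$ fails as a discriminator (since $\mm{Sq}^1\mm{Sq}^3=0$ by Adem) is a correct self-correction.

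There is one genuine flaw: you obtain the dimension count $\dim_{\mb{Z}_2}C^{11}(\mm{PL})=2$ by citing Theorem \ref{CastPL}(3). That is circular --- the paper explicitly states that parts (3) and (4) of Theorem \ref{CastPL} are what is being \emph{proved} in this section, and the $C^{11}$ assertion of (3) is precisely Proposition \ref{C11}. The fix is immediate and is what the paper does: having reduced to $\mathscr{V}_0\otimes H_{11}(\mathscr{K})$ (all other terms $\mathscr{V}_i\otimes H_{11-i}(\mathscr{K})$ vanish because $\mathscr{V}_i=0$ for $1\le i\le 8$ and $i=11$, while $H_j(\mathscr{K})=0$ for $j=1,2,9$), compute directly that
$$H^{11}(\mathscr{K})=H^{11}(\mm{K}(\mb{Z},8)\times \mm{K}(\mb{Z}_2,10))=\mb{Z}_2\langle \mm{Sq}^3\ell_8\rangle\oplus\mb{Z}_2\langle \mm{Sq}^1\ell_{10}\rangle=\mb{Z}_2^2.$$
With that computation in place of the appeal to Theorem \ref{CastPL}(3), your argument is complete and coincides with the paper's.
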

\begin{proof}
	Since $V^{11}=0$ (see Lemma \ref{VdePro}), $C^{11}(\mm{PL})$ corresponds to $$H^{11}(\mathscr{K})=H^{11}(\mm{K}(\mb{Z},8)\times \mm{K}(\mb{Z}_2,10))=\mb{Z}_2^2.$$
	So the desired results follow from Proposition \ref{C10}.
\end{proof}

By Proposition \ref{C11}, the homomorphism
$$\mm{Ext}_{\mathscr A}^{s,s+11}(H^\ast(\mm{MSO}),\mb{Z}_2)\to \mm{Ext}_{\mathscr A}^{s,s+11}(H^\ast(\mm{MSPL}),\mb{Z}_2)$$
is isomorphic. By the AAHSS (\ref{AAHSSSPL}), $\mm{Ext}_{\mathscr A}^{s,s+10}(H^\ast(\mm{MSPL}),\mb{Z}_2)=0$ for any $s>0$.
 Since $\Omega_{11}^{\mm{SPL}}=\mb{Z}_2\oplus \mm{2gp}\oplus \mb{Z}_3$ \cite{Will1966}, we have
 \begin{corollary}
	$\Omega_{11}^{\mm{SPL}}=\mb{Z}_2\oplus \mb{Z}_3$.
\end{corollary}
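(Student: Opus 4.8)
The plan is to read the $2$-primary part of $\Omega_{11}^{\mm{SPL}}$ off the Adams spectral sequence (\ref{ASSMSPL}) and then splice in the odd torsion supplied by Williamson. The odd torsion causes no trouble: \cite{Will1966} already pins the odd part of $\Omega_{11}^{\mm{SPL}}$ down to exactly $\mb{Z}_3$ (both $\mb{Z}_2$ and $\mm{2gp}$ are $2$-primary), so the whole task reduces to showing that the $2$-primary part $\mb{Z}_2\oplus\mm{2gp}$ is just $\mb{Z}_2$, i.e. that $\mm{2gp}=0$. Equivalently, I must prove ${_2}\pi_{11}(\mm{MSPL})=\mb{Z}_2$.

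The first step is to pin down the entire column $t-s=11$ of the $E_2$-page. Combining the displayed isomorphism $\mm{Ext}_{\mathscr A}^{s,s+11}(H^\ast(\mm{MSO}),\mb{Z}_2)\xrightarrow{\cong}\mm{Ext}_{\mathscr A}^{s,s+11}(H^\ast(\mm{MSPL}),\mb{Z}_2)$ with the known structure of $\mm{Ext}_{\mathscr A}^{\ast,\ast}(H^\ast(\mm{MSO}),\mb{Z}_2)$ from \cite{Pen1982} — equivalently, with the fact that $\mm{MSO}$ is $2$-locally a wedge of Eilenberg--MacLane spectra, so every $\mb{Z}_2$-summand of $\Omega_\ast^{\mm{SO}}$ is detected in Adams filtration $0$ and there is no $h_0$-tower in an odd column — together with $\Omega_{11}^{\mm{SO}}=\mb{Z}_2$, I get that $\mm{Ext}_{\mathscr A}^{s,s+11}(H^\ast(\mm{MSPL}),\mb{Z}_2)$ is a single $\mb{Z}_2$ at $s=0$ and vanishes for $s>0$.

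The second step is to kill all differentials touching this column. A differential $d_r$ out of $E_r^{0,11}$ lands in $E_r^{r,10+r}$, which sits in the column $t-s=10$ at filtration $r\ge 2>0$; that group is $0$ by the second displayed fact, $\mm{Ext}_{\mathscr A}^{s,s+10}(H^\ast(\mm{MSPL}),\mb{Z}_2)=0$ for $s>0$, so there are no outgoing differentials. Incoming differentials are equally harmless: at $s=0$ there is no source of negative filtration, and at every $s>0$ the target already vanishes. Hence $E_\infty^{s,s+11}=E_2^{s,s+11}$ is a single $\mb{Z}_2$, concentrated in one spot, so there is no room for a hidden $h_0$-extension and ${_2}\pi_{11}(\mm{MSPL})=\mb{Z}_2$. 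Therefore $\mm{2gp}=0$, and feeding this back into $\Omega_{11}^{\mm{SPL}}=\mb{Z}_2\oplus\mm{2gp}\oplus\mb{Z}_3$ gives $\Omega_{11}^{\mm{SPL}}=\mb{Z}_2\oplus\mb{Z}_3$.

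The genuine crux is the first step: knowing that the whole column $t-s=11$ of the $\mm{MSPL}$ $E_2$-page is concentrated in filtration $0$. Everything downstream — vanishing of differentials, absence of extensions — is then formal, and it is precisely this concentration that forces the a priori unknown $\mm{2gp}$ to collapse. It rests on transporting the exact filtration-by-filtration information from $\mm{MSO}$ across the isomorphism furnished by Proposition \ref{C11}; without that degreewise identification one could only bound the $2$-primary part rather than compute it.
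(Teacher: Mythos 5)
Your proposal is correct and follows essentially the same route as the paper: use Proposition \ref{C11} to identify the column $t-s=11$ of the $\mm{MSPL}$ Adams $E_2$-page with that of $\mm{MSO}$ (a single $\mb{Z}_2$ in filtration $0$), use the vanishing of $\mm{Ext}_{\mathscr A}^{s,s+10}(H^\ast(\mm{MSPL}),\mb{Z}_2)$ for $s>0$ to rule out differentials, and feed the resulting ${_2}\pi_{11}(\mm{MSPL})=\mb{Z}_2$ into Williamson's $\mb{Z}_2\oplus\mm{2gp}\oplus\mb{Z}_3$. Your write-up merely makes explicit the steps (concentration in filtration $0$, absence of incoming differentials and extensions) that the paper leaves implicit.
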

Based on Lemma \ref{K2primehomology} and the proof of Proposition \ref{C11}, we have 
\begin{corollary}
	$\mm{Sq}^2p_{9,1}= 0$, $\mm{Sq}^2p_{9,2}= 0$.
\end{corollary}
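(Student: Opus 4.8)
The plan is to detect the two classes $\mm{Sq}^2p_{9,1}$ and $\mm{Sq}^2p_{9,2}$ after pushing them into the cohomology of the two relevant Eilenberg--MacLane factors of $\mm{G/PL}$, exploiting that $p_{9,1}$ and $p_{9,2}$ sit below the connectivity of those factors. First, since $C^\ast(\mm{PL})$ is closed under the $\mathscr A$-action, Proposition \ref{C11} gives $\mm{Sq}^2p_{9,j}\in C^{11}(\mm{PL})=\mb{Z}_2\langle p_{11,1}\rangle\oplus\mb{Z}_2\langle p_{11,3}\rangle$, so I write $\mm{Sq}^2p_{9,j}=a_j\,p_{11,1}+b_j\,p_{11,3}$ with $a_j,b_j\in\mb{Z}_2$; the goal is to force $a_j=b_j=0$ for $j=1,2$.

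The tool is the natural $\mathscr A$-module map $\phi:H^\ast(\mm{BSPL})\to H^\ast(\mm{G/PL})$ appearing in Lemma \ref{K2primehomology}, composed with the restrictions $\pi_8,\pi_{10}$ to the factors $\mm{K}(\mb{Z},8)$ and $\mm{K}(\mb{Z}_2,10)$ of Sullivan's splitting. By Lemma \ref{K2primehomology}, $p_8$ and $p_{10,3}$ map under $\phi$ to $k_8+\cdots$ and $k_{10}+\cdots$, where the correction terms are products of classes from other factors; these die under the projections, so $\pi_8\phi(p_8)=\ell_8$ and $\pi_{10}\phi(p_{10,3})=\ell_{10}$. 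Since $p_{11,1}=\mm{Sq}^1p_{10,1}=\mm{Sq}^3p_8$ and $p_{11,3}=\mm{Sq}^1p_{10,3}$, and $\phi,\pi_8,\pi_{10}$ commute with Steenrod operations, I obtain $\pi_8\phi(p_{11,1})=\mm{Sq}^3\ell_8\neq 0$ and $\pi_{10}\phi(p_{11,1})=0$ (as $\pi_{10}\phi(p_8)=0$ for degree reasons), together with $\pi_{10}\phi(p_{11,3})=\mm{Sq}^1\ell_{10}\neq 0$; here $\mm{Sq}^3\ell_8$ and $\mm{Sq}^1\ell_{10}$ are precisely the generators of $H^{11}(\mm{K}(\mb{Z},8))$ and $H^{11}(\mm{K}(\mb{Z}_2,10))$.

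The decisive observation is that both $\mm{K}(\mb{Z},8)$ and $\mm{K}(\mb{Z}_2,10)$ have $H^9=0$, so $\pi_8\phi(p_{9,j})=0$ and $\pi_{10}\phi(p_{9,j})=0$, whence $\pi_8\phi(\mm{Sq}^2p_{9,j})=\mm{Sq}^2\pi_8\phi(p_{9,j})=0$ and likewise $\pi_{10}\phi(\mm{Sq}^2p_{9,j})=0$. Applying $\pi_{10}\phi$ to the equation $\mm{Sq}^2p_{9,j}=a_j p_{11,1}+b_j p_{11,3}$ yields $b_j\,\mm{Sq}^1\ell_{10}=0$, so $b_j=0$; applying $\pi_8\phi$ then yields $a_j\,\mm{Sq}^3\ell_8=0$, so $a_j=0$. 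This gives $\mm{Sq}^2p_{9,1}=\mm{Sq}^2p_{9,2}=0$.

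The main obstacle is bookkeeping rather than substance: checking that $\pi_8,\pi_{10}$ are genuine $\mathscr A$-module maps despite the twisted product $\mm{K}(\mb{Z}_2,2)\times_{\delta\mm{Sq}^2}\mm{K}(\mb{Z},4)$ in Sullivan's splitting (the higher factors split off honestly, so this is unproblematic), and that the correction terms in $\phi(p_8)$ and $\phi(p_{10,3})$ vanish under projection. A convenient feature of the argument is its robustness: even if $\phi(p_{10,3})$ carried a hidden $\mm{Sq}^2\ell_8$ component, making $\pi_8\phi(p_{11,3})$ nonzero, the $\pi_{10}$-computation eliminates $b_j$ first, after which the $\pi_8$-computation eliminates $a_j$, so the conclusion is unaffected.
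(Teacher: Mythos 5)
Your argument is correct and is essentially the paper's intended proof: the paper's one-line justification ("by Lemma \ref{K2primehomology} and the proof of Proposition \ref{C11}") amounts to exactly your observation that $C^{11}(\mm{PL})$ is faithfully detected in $H^{11}(\mm{K}(\mb{Z},8)\times\mm{K}(\mb{Z}_2,10))\subset H^{11}(\mm{G/PL})$ while the classes $p_{9,1},p_{9,2}$ restrict to zero there because both factors have trivial $H^9$. Your extra care about the twisted factor in Sullivan's splitting and about possible correction terms in $\phi(p_{10,3})$ only makes explicit what the paper leaves implicit.
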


Note that the Thom isomorphism $H_{\ast}(\mm{BSPL};\mb{Z})\cong H_{\ast}(\mm{MSPL};\mb{Z})$. Finally, we consider $H_{\ast}(\mm{BSPL};\mb{Z})$ by the spectral sequence (AHSS)
\begin{equation}
	E_2^{p,q}(\mm{SPL})=H_p(\mm{MSPL};\pi_q(S^0))\Longrightarrow \pi_{p+q}(\mm{MSPL}) \label{MSPLAHSS}
\end{equation}
where $S^0$ is the sphere spectrum.
\begin{lemma}\label{BSPLhomologyZ}
	$H_{n}(\mm{BSPL};\mb{Z})$ does not have odd torsion for $n<11$. 
\end{lemma}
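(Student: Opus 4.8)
The plan is to prove that $H_n(\mm{BSPL};\mb{Z})$ has no odd torsion for $n<11$ by transporting the question through the Thom isomorphism $H_\ast(\mm{BSPL};\mb{Z})\cong H_\ast(\mm{MSPL};\mb{Z})$ and then reading off the integral homology of $\mm{MSPL}$ from the Atiyah--Hirzebruch spectral sequence (\ref{MSPLAHSS}). Since odd torsion in $H_\ast(\mm{BSPL};\mb{Z})$ is equivalent to odd torsion in $H_\ast(\mm{MSPL};\mb{Z})$, it suffices to understand the odd-primary part of the low-dimensional homology of the spectrum $\mm{MSPL}$. First I would localize at an odd prime $p$ and recall that the odd-torsion behaviour of $\mm{PL}$-cobordism agrees with that of smooth cobordism in this range: the fibre $\mm{PL/O}$ is $6$-connected, so the map $\mm{BSO}\to \mm{BSPL}$ is a homology isomorphism through dimension $6$, and more to the point $\mm{PL/O}$ carries only $2$-primary homotopy in the relevant range, so $\mm{MSPL}$ and $\mm{MSO}$ have the same odd-localized homotopy and homology in low degrees.

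The key steps, in order, are as follows. First, establish that at each odd prime $p$ the groups $\pi_q(S^0)_{(p)}$ vanish in the range that can contribute to $H_n(\mm{MSPL};\mb{Z})_{(p)}$ for $n<11$: the first odd-primary stable stem at $p=3$ is $\pi_3(S^0)$ (the $\alpha_1$ class of order $3$) and all other odd torsion in $\pi_q(S^0)$ for small $q$ is likewise concentrated in a few known degrees. Second, feed these coefficient groups into (\ref{MSPLAHSS}) and track which entries $E_2^{p,q}=H_p(\mm{MSPL};\pi_q(S^0))$ with $q>0$ could house odd torsion in total degree $p+q<11$. Third, use the known structure of $\Omega_\ast^{\mm{SPL}}=\pi_\ast(\mm{MSPL})$ in this range --- in particular $\Omega_n^{\mm{SPL}}$ has no odd torsion except the $\mb{Z}_3$ summand appearing in dimension $11$, as recorded above from Williamson \cite{Will1966} and refined in the preceding corollary --- to constrain the abutment. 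Comparing the abutment with the $E_2$-page forces the potentially offending entries either to vanish or to be killed by differentials, and since no odd torsion appears in $\pi_{p+q}(\mm{MSPL})$ below degree $11$, one concludes $H_n(\mm{MSPL};\mb{Z})$ has no odd torsion for $n<11$.

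The cleanest way to organize the argument is to work one odd prime at a time and exploit the splitting philosophy behind (\ref{PLtoBO}): since $H^\ast(\mm{BPL})\cong H^\ast(\mm{BO})\otimes C^\ast(\mm{PL})$ and the correction factor $C^\ast(\mm{PL})$ is purely $2$-local (its generators $p_8, p_{9,i}, p_{10,i}, p_{11,i}$ are all $\mb{Z}_2$-classes detected by Steenrod squares), the odd-primary homology of $\mm{BSPL}$ should coincide with that of $\mm{BSO}$ in low degrees, and $H_\ast(\mm{BSO};\mb{Z})$ is well known to be free of odd torsion. Thus the heart of the proof is the assertion that the $\mm{PL}$-versus-$\mm{O}$ discrepancy contributes nothing at odd primes below dimension $11$, which follows because $\mm{PL/O}$ is a $2$-complete space in this range.

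The main obstacle I expect is controlling the odd-primary contribution of the stable stems entering through the Atiyah--Hirzebruch spectral sequence, specifically ruling out that the $3$-torsion class $\alpha_1\in\pi_3(S^0)$ pairs with a nonzero homology class $H_p(\mm{MSPL};\mb{Z})$ to survive as odd torsion in some $H_n(\mm{MSPL};\mb{Z})$ with $7\le n<11$; one must check that every such potential product is either absent on the $E_2$-page or annihilated by a $d_r$ differential, consistent with the odd-torsion-free abutment $\Omega_n^{\mm{SPL}}$ for $n<11$. Establishing that the relevant differentials behave as required, rather than leaving surviving odd torsion in the homology itself (which need not match the homotopy), is the delicate point, and I would handle it by the odd-primary comparison with $\mm{MSO}$, for which the corresponding homology is known to be odd-torsion-free, so the comparison map $\mm{MSO}\to\mm{MSPL}$ (an odd-local equivalence in this range) transports that property directly.
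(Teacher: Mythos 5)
Your overall strategy (Thom isomorphism, then play the Atiyah--Hirzebruch spectral sequence (\ref{MSPLAHSS}) off against Williamson's computation of $\pi_\ast(\mm{MSPL})$) is the same as the paper's, but the step you lean on to make it ``clean'' is false, and without it the argument has a real gap. You assert that $\mm{PL/O}$ ``carries only $2$-primary homotopy in the relevant range'' so that $\mm{MSO}\to\mm{MSPL}$ is an odd-local equivalence below dimension $11$. In fact $\pi_n(\mm{PL/O})\cong\Theta_n$ for $n\ge 5$, and $\Theta_7=\mb{Z}_{28}$ contains $\mb{Z}_7$ while $\Theta_{10}=\mb{Z}_6$ contains $\mb{Z}_3$. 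So there genuinely is odd-primary content in the fibre of $\mm{BSO}\to\mm{BSPL}$ starting in degree $7$, and ruling out the resulting potential $7$-torsion in $H_8(\mm{BSPL};\mb{Z})$ (and $3$-, $7$-torsion in degrees $9$, $10$) is precisely the content of the lemma; it cannot be dismissed by declaring the fibre $2$-local. Relatedly, the observation that $C^\ast(\mm{PL})$ is ``purely $2$-local'' is vacuous here, since $C^\ast(\mm{PL})$ is by definition a $\mb{Z}_2$-algebra and carries no information about odd torsion.

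The second problem is that your fallback --- comparing the $E_2$-page of (\ref{MSPLAHSS}) with the odd-torsion-free abutment --- does not by itself exclude odd torsion on the bottom row. An odd-torsion class in $E_2^{n,0}=H_n(\mm{MSPL};\mb{Z})$ dies in the abutment exactly when it supports a differential $d_r\colon E_r^{n,0}\to E_r^{n-r,r-1}$, and for $n\le 10$ the possible targets include subquotients of $H_{n-4}(\mm{MSPL};\pi_3(S^0))$ and $H_{n-8}(\mm{MSPL};\pi_7(S^0))$, which do contain $3$- and $5$-torsion since $\pi_3(S^0)=\mb{Z}_{24}$ and $\pi_7(S^0)=\mb{Z}_{240}$. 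So the abutment constraint alone leaves room for, say, $3$-torsion in $H_8(\mm{MSPL};\mb{Z})$ killed by a $d_4$. The paper closes this gap by first running the Serre spectral sequence of $\mm{PL/O}\to\mm{BSO}\to\mm{BSPL}$, using $\pi_7(\mm{PL/O})=H_7(\mm{PL/O};\mb{Z})=\mb{Z}_{28}$, to show that the only odd prime that could possibly occur in $H_8(\mm{BSPL};\mb{Z})$ is $7$; only then does the AHSS argument (where no $7$-torsion can appear in the possible differential targets, since the first $7$-torsion in the stable stems is in $\pi_{11}(S^0)$) combine with $\pi_8(\mm{MSPL})=\mb{Z}^2\oplus\mb{Z}_4$ to finish. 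You need both spectral sequences, each doing a job the other cannot.
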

\begin{proof}
	It is well known that $H_{n}(\mm{BSO};\mb{Z})\cong H_{n}(\mm{BSPL};\mb{Z})$ for $n\le 7$, and $H_{\ast}(\mm{BSO};\mb{Z})$ does not have odd torsion.
Recall that, in dimensions $i \le 11$ \cite{Toda}, $\pi_i(S^0)$ is as follows.

\begin{center}
\begin{tabular}{|c|c|c|c|c|c|c|c|c|c|c|c|c|c|c|c|}
\hline $i$ & 0&1& 2& 3&4& 5& 6&7&8&9&10&11\\
\hline
$\pi_i(S^0 )$ & $\mathbb{Z}$ & $\mathbb{Z}_2$ & $\mathbb{Z}_2$ & $\mathbb{Z}_{24}$& 0& 0& $\mathbb{Z}_2$ & $\mm{Z}_{240}$
& $ \mathbb{Z}_2^2$ & $\mathbb{Z}^3_2$&$\mb{Z}_6$&$\mb{Z}_{504}$\\
\hline
\end{tabular}
\end{center}

Note that $\mm{PL/O}$ is $6$-connected, $\pi_7(\mm{PL/O})=H_{7}(\mm{PL/O};\mb{Z})=\mb{Z}_{28}$. By the Serre spectral sequence for the fibration,
$$\mm{PL/O}\to \mm{BSO}\to \mm{BSPL}$$
$H_{8}(\mm{BSPL};\mb{Z})$ does not have any element with finite order $i$ so that $i$ is prime to $2$, $7$; and $H_{8}(\mm{BSO};\mb{Z}_3)\to H_{8}(\mm{BSPL};\mb{Z}_3)$ is isomorphic.

Note that for $p<7$, $E_2^{p,7-p}(\mm{SPL})$ is a finite group that does not have any element with order $i$ so that $(i,7)=7$. 
By $\pi_8(\mm{MSPL})=\mb{Z}^2\oplus \mb{Z}_4$ \cite{Will1966} and the differentials of the spectral sequence (\ref{MSPLAHSS}), $H_{8}(\mm{BSPL};\mb{Z})$ does not have odd torsion. 

By the same arguments, the desired results for $n=9,10$ follow.
\end{proof}

\section{$\Omega_\ast^{\mm{PL}\langle 8\rangle}$ cobordism}
We first consider the integral homology group for $\mm{BPL}\langle n\rangle$. 
 \begin{lemma}\label{BPL8homologyZ}
	Let $n=4$, $8$.	$H_{i}(\mm{BPL}\langle n\rangle;\mb{Z})$ does not have odd torsion for $i<11$. 
\end{lemma}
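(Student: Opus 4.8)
The plan is to build $\mm{BPL}\langle n\rangle$ from $\mm{BSPL}$ through the two stages of the Whitehead tower and feed in Lemma \ref{BSPLhomologyZ}, working one odd prime $p$ at a time: since ``no odd torsion'' is detected prime by prime, it suffices to prove that $H_i(\mm{BPL}\langle n\rangle;\mb{Z}_{(p)})$ is torsion-free for $i<11$ and every odd $p$. Because $\pi_2(\mm{BSPL})=\mb{Z}_2$, $\pi_4(\mm{BSPL})=\mb{Z}$ and $\pi_3=\pi_5=\pi_6=\pi_7=0$ (the last from $\pi_7(\mm{BO})=0$ and the exact sequence of $\mm{PL/O}\to\mm{BO}\to\mm{BPL}$), the relevant fibrations are
\[
\mm{K}(\mb{Z}_2,1)\to\mm{BPL}\langle4\rangle\to\mm{BSPL},\qquad \mm{K}(\mb{Z},3)\to\mm{BPL}\langle8\rangle\to\mm{BPL}\langle4\rangle .
\]
For $n=4$ the conclusion is immediate: $\mm{K}(\mb{Z}_2,1)$ has trivial reduced $\mb{Z}_{(p)}$-homology, so $\mm{BPL}\langle4\rangle\to\mm{BSPL}$ is a $\mb{Z}_{(p)}$-homology isomorphism and Lemma \ref{BSPLhomologyZ} disposes of this case.

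For the harder case $n=8$ I would run two complementary arguments depending on $p$. The key external facts are that $\mm{BO}\langle8\rangle\to\mm{BPL}\langle8\rangle$ has homotopy fibre $\mm{PL/O}$ with $\pi_\ast(\mm{PL/O})=\Theta_\ast$, where $\Theta_7=\mb{Z}_{28}$, $\Theta_8,\Theta_9$ are $2$-groups and $\Theta_{10}$ has order $6$; hence $\mm{PL/O}$ is at least $9$-connected $p$-locally for every odd $p\neq7$, but only $6$-connected at $p=7$ (where $\pi_7=\mb{Z}_7$). On the other side $\mm{BO}\langle8\rangle$ has $\pi_8=\mb{Z}$ and only $2$-primary homotopy through degree $11$, so $p$-locally it is $\mm{K}(\mb{Z}_{(p)},8)$ through degree $11$, whence $H_i(\mm{BO}\langle8\rangle;\mb{Z}_{(p)})$ is torsion-free for $i\le 11$. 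For odd $p\neq7$, I would compare along $\mm{PL/O}\to\mm{BO}\langle8\rangle\to\mm{BPL}\langle8\rangle$: the $9$-connectivity of the fibre gives $H_i(\mm{BPL}\langle8\rangle;\mb{Z}_{(p)})\cong H_i(\mm{BO}\langle8\rangle;\mb{Z}_{(p)})$ for $i\le 9$, and the only extra fibre class in range lies in $H_{10}(\mm{PL/O};\mb{Z}_{(p)})$ (nonzero only at $p=3$), which is forced to die against the torsion-free $H_{10}(\mm{BO}\langle8\rangle;\mb{Z}_{(p)})$; this yields torsion-freeness through $i=10$.

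For $p=7$ the comparison fails exactly in the critical range, so I would instead use the second fibration $\mm{K}(\mb{Z},3)\to\mm{BPL}\langle8\rangle\to\mm{BPL}\langle4\rangle$ and its multiplicative cohomology Serre spectral sequence with $\mb{Z}_{(7)}$ coefficients. By the $n=4$ case and the polynomial structure of $H^\ast(\mm{BSO};\mb{Z}_{(7)})$, the base ring is $H^\ast(\mm{BPL}\langle4\rangle;\mb{Z}_{(7)})=\mb{Z}_{(7)}[p_1,p_2,\dots]$ through degree $11$, in which $p_1$ is a non-zero-divisor, and the transgression carries $\iota_3$ to $p_1$ (the class being killed). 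Since the first $7$-torsion of $\mm{K}(\mb{Z},3)$ occurs only in degree $\ge 15$, the fibre is $\mb{Z}_{(7)}$-free in range, the spectral sequence collapses after $d_4$ to the Koszul quotient $\mb{Z}_{(7)}[p_1,p_2,\dots]/(p_1)=\mb{Z}_{(7)}[p_2,\dots]$, which is torsion-free through degree $11$; hence $H_i(\mm{BPL}\langle8\rangle;\mb{Z}_{(7)})$ is torsion-free for $i\le 10$.

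The hard part is precisely the prime $p=7$ in degree $8$. There $\mm{PL/O}_{(7)}=\mm{K}(\mb{Z}_7,7)$ enters exactly in the critical range, and both the $\mm{PL/O}$-comparison and the homology spectral sequence pin down $H_8(\mm{BPL}\langle8\rangle;\mb{Z}_{(7)})=\pi_8(\mm{BPL})_{(7)}$ only up to the extension $0\to\mb{Z}_{(7)}\to\pi_8(\mm{BPL})_{(7)}\to\mb{Z}_7\to0$ coming from $\Theta_7=\mb{Z}_{28}$; by themselves they cannot decide whether the $7$-torsion of $\Theta_7$ survives. What rescues the statement is exactly the ring-theoretic input: because $p_1$ is a non-zero-divisor, the $\mm{K}(\mb{Z},3)$-spectral sequence forces the extension to be the torsion-free one, i.e. the smooth generator of $\pi_8(\mm{BO})$ maps to $28$ times a generator of $\pi_8(\mm{BPL})$. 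Carrying out this Koszul collapse carefully, and checking that no residual odd torsion is manufactured at the top of the range, is where the real work lies.
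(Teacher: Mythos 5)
Your $n=4$ step and your odd-$p\neq 7$ comparison along $\mm{PL/O}\to \mm{BO}\langle 8\rangle\to \mm{BPL}\langle 8\rangle$ are fine (the latter is a legitimate alternative to the paper, which instead compares the two $\mm{K}(\mb{Z},3)$-fibrations over $\mm{BO}\langle 4\rangle$ and $\mm{BPL}\langle 4\rangle$ and quotes Giambalvo's $H_{10}(\mm{BO}\langle 8\rangle;\mb{Z}_3)=0$ to kill the $3$-torsion coming from the fibre). The genuine gap is in your $p=7$ argument. You assert that $H^\ast(\mm{BPL}\langle 4\rangle;\mb{Z}_{(7)})=\mb{Z}_{(7)}[p_1,p_2,\dots]$ through degree $11$ ``by the $n=4$ case and the polynomial structure of $H^\ast(\mm{BSO};\mb{Z}_{(7)})$,'' and you lean on $p_1$ being a non-zero-divisor with torsion-free Koszul quotient. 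The $n=4$ case gives only additive torsion-freeness; it says nothing about whether the square of the degree-$4$ generator is indivisible in $H^8(\mm{BPL}\langle 4\rangle;\mb{Z}_{(7)})\cong\mb{Z}_{(7)}^2$. Nor can you import this from $\mm{BSO}$: the comparison map $H^8(\mm{BPL}\langle 4\rangle;\mb{Z}_{(7)})\to H^8(\mm{BSpin};\mb{Z}_{(7)})$ fails to be an isomorphism precisely at the prime $7$ in degree $8$ (this is the $\mb{Z}_{28}=\Theta_7$ phenomenon you yourself flag). Worse, the indivisibility of that square is \emph{equivalent}, via your own cohomological Koszul collapse, to the absence of $7$-torsion in $H_8(\mm{BPL}\langle 8\rangle;\mb{Z})$ — so as written the step is circular.

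The irony is that the difficulty is illusory once you switch variance. Run the \emph{homology} Serre spectral sequence of the same fibration $\mm{K}(\mb{Z},3)\to\mm{BPL}\langle 8\rangle\to\mm{BPL}\langle 4\rangle$ with $\mb{Z}_{(7)}$ coefficients: the fibre has free homology concentrated in degrees $0$ and $3$ through the range (its first $7$-torsion is far above degree $11$), so $H_i(\mm{BPL}\langle 8\rangle;\mb{Z}_{(7)})$ is, for $i\le 10$, an iterated extension of kernels of differentials out of $H_p(\mm{BPL}\langle 4\rangle;\mb{Z}_{(7)})$ and of subquotients of the (free) fibre homology; kernels and extensions of torsion-free groups are torsion-free, so no multiplicative input is needed at all. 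This is essentially what the paper does, and it shows that the $7$-primary content of the lemma is already fully absorbed by Lemma \ref{BSPLhomologyZ} (whose proof, via Williamson's $\pi_8(\mm{MSPL})=\mb{Z}^2\oplus\mb{Z}_4$, is where the extension $0\to\mb{Z}\to\pi_8(\mm{BPL})\to\mb{Z}_{28}\to 0$ actually gets resolved). Your proposal misplaces the hard part and then fills the perceived hole with an unproved, essentially equivalent statement.
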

\begin{proof}
	Recall the fibrations   
  $$\mm{K}(\mb{Z}_2,1)\to \mm{BPL}\langle 4\rangle\to \mm{BSPL}$$
  $$\mm{K}(\mb{Z},3)\to \mm{BPL}\langle 8\rangle\to \mm{BPL}\langle 4\rangle$$
  Since $H^i(\mm{K}(\mb{Z}_2,1);\mb{Z})=\mb{Z}_2$ or $0$ for any $i> 0$, the homomorphism
 $$ H^\ast(\mm{BSPL};\mb{Z}_{p^r})\to  H^\ast(\mm{BPL}\langle 4\rangle;\mb{Z}_{p^r})$$
is isomorphic for any $r>0$ and any odd prime number $p$. The desired results for $n=4$ follow by Lemmma \ref{BSPLhomologyZ}. 

By the conclusions for $n=4$, we compute the case for $n=8$. Recall that, in dimensions $\le 11$, the nontrivial integral cohomology groups of $\mm{K}(\mathbb{Z}, 3)$ are as follows 
\begin{center}
\begin{tabular}{|c|c|c|c|c|c|c|c|c|c|c|c|c|c|c|c|}
\hline  
 i & 3& 6 &8 &9 &10 &11 \\
\hline  
$H^i$ &$\mathbb{Z}\langle l_3 \rangle$ &$\mathbb{Z}_2\langle l_3^2 \rangle$&$\mathbb{Z}_3\langle a_8 \rangle$&$\mathbb{Z}_2\langle l^3_3 \rangle$&$\mathbb{Z}_2\langle a_{10} \rangle$&$\mathbb{Z}_3\langle a_8l_3 \rangle$\\
\hline
\end{tabular}
\end{center}
For the $7$-connected covers of $\mm{BO}$ and $\mm{BPL}$, we have 
  \[
\xymatrix@C=0.8cm{
\mm{K}(\mb{Z},3)\ar[d]^-{}\ar@{=}[rr]&&\mm{K}(\mb{Z},3)\ar[d]^-{}\ar@{=}[rr]&&\mm{K}(\mb{Z},3)\ar[d]^-{}\\
\mm{BO}\langle 8\rangle \ar[rr]^-{\mathscr{D}\langle 8\rangle}\ar[d]^-{p_1}& & \mathrm{BPL}\langle 8\rangle\ar[rr] \ar[d]^-{p_{2}}&&\mm{PK}(\mb{Z},4)\ar[d]^-{}\\
  \mm{BO}\langle 4\rangle \ar[rr]^-{\mathscr{D}\langle 4\rangle}&& \mm{BPL}\langle 4\rangle \ar[rr]^-{g_{l}}&&\mm{K}(\mb{Z},4)
}
\]
 As the proof of Lemma \ref{BSPLhomologyZ}, $H_{8}(\mm{BO}\langle 4\rangle ;\mb{Z}_3)\to H_{8}(\mm{BPL}\langle 4\rangle ;\mb{Z}_3)$ is isomorphic. Note $H_{10}(\mm{BO}\langle 8\rangle;\mb{Z}_3)=0$ \cite{Giambalvo}.
Considering these $E_2^{p,q}(\mm{PL}8)$-terms for $8\le p+q\le 10$ and the following morphism
\[
\xymatrix@C=.2cm{
  {E}_2^{p,q}(\mm{O}8)=H_{p}(\mm{BO}\langle 4\rangle;H_q(\mm{K}(\mb{Z},3);\mb{Z}))\ar@{=>}[rr]^{}\ar[d]&  & H_{p+q}(\mm{BO}\langle 8\rangle;\mb{Z})\ar[d]^-{} \\
{E}_2^{p,q}(\mm{PL}8)=H_{p}(\mm{BPL}\langle 4\rangle;H_q(\mm{K}(\mb{Z},3);\mb{Z}))\ar@{=>}[rr]^-{}& & H_{p+q} (\mm{BPL}\langle 8\rangle;\mb{Z})
}
\]
we have the desired results for the case $n=8$.
\end{proof}

Then, we compute the $\mb{Z}_2$-cohomology of $\mm{BPL}\langle 8\rangle$.   
\begin{lemma}\label{BPLandBO}
		$H^\ast(\mm{BPL}\langle 8\rangle)\cong H^\ast(\mm{BO}\langle 8\rangle)\otimes  C^\ast(\mm{PL})$ as right $\mathscr{A}$-modules. The formula $($\ref{rightmodule}$)$ induces the right $\mathscr{A}$ action on $H^\ast(\mm{BO}\langle 8\rangle)\otimes  C^\ast(\mm{PL}).$
\end{lemma}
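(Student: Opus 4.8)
The plan is to compute $H^\ast(\mm{BPL}\langle 8\rangle)$ from the Serre spectral sequence of the fibration $\mm{K}(\mb{Z},3)\to \mm{BPL}\langle 8\rangle\to \mm{BPL}\langle 4\rangle$ and to run it alongside the corresponding fibration $\mm{K}(\mb{Z},3)\to \mm{BO}\langle 8\rangle\to \mm{BO}\langle 4\rangle$, using the map of fibrations furnished by the large diagram in the proof of Lemma \ref{BPL8homologyZ}. The essential input is the case $n=4$ of Theorem \ref{PLtoBO248}, namely $H^\ast(\mm{BPL}\langle 4\rangle)\cong H^\ast(\mm{BO}\langle 4\rangle)\otimes C^\ast(\mm{PL})$, so that on the $E_2$-page
\[
E_2 \cong \big(H^\ast(\mm{BO}\langle 4\rangle)\otimes H^\ast(\mm{K}(\mb{Z},3))\big)\otimes C^\ast(\mm{PL}),
\]
with $C^\ast(\mm{PL})$ entering as a tensor factor coming from the base.

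First I would identify $\mm{BPL}\langle 8\rangle$ with the homotopy fibre of $g_l:\mm{BPL}\langle 4\rangle\to \mm{K}(\mb{Z},4)$, as recorded by the right-hand square of that diagram, so that its spectral sequence is the pullback along $g_l$ of the path--loop fibration $\mm{K}(\mb{Z},3)\to \mm{PK}(\mb{Z},4)\to \mm{K}(\mb{Z},4)$. In the path--loop fibration the total space is contractible and every fibre class transgresses; by the Kudo transgression theorem all transgressions are generated over $\mathscr A$ by $\tau(\ell_3)=\ell_4$. Pulling back, the transgressions in the $\mm{BPL}\langle 8\rangle$-sequence are $g_l^\ast(\mm{Sq}^I\ell_4)=\mm{Sq}^I u$ with $u:=g_l^\ast\ell_4\in H^4(\mm{BPL}\langle 4\rangle)$, and naturality of the diagram sends each such $\mm{Sq}^I u$ under $\mathscr D\langle 4\rangle^\ast$ to the corresponding transgression in the $\mm{BO}\langle 8\rangle$-sequence.

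The heart of the argument is to show that this spectral sequence splits off $C^\ast(\mm{PL})$ as a tensor factor of permanent cycles, giving $E_r\cong E_r(\mm{BO}\langle 8\rangle)\otimes C^\ast(\mm{PL})$ for every $r$ and hence, over $\mb{Z}_2$, an additive isomorphism $H^\ast(\mm{BPL}\langle 8\rangle)\cong H^\ast(\mm{BO}\langle 8\rangle)\otimes C^\ast(\mm{PL})$. Since $u$ lives in $H^4$, below the range where $C^\ast(\mm{PL})$ is nonzero, the issue is to check that every transgression $\mm{Sq}^I u$ has trivial component in the ideal $\ker\mathscr D\langle 4\rangle^\ast$ carrying $C^\ast(\mm{PL})$, so that no class of $C^\ast(\mm{PL})$ is ever hit. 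The critical case is the $d_{10}$-transgression of the degree-$9$ generator $\mm{Sq}^4\mm{Sq}^2\ell_3$, whose target $g_l^\ast(\mm{Sq}^4\mm{Sq}^2\ell_4)$ could a priori meet $C^{10}(\mm{PL})=\mb{Z}_2^3$. I would exclude this by combining the explicit $\mathscr A$-module structure of $C^\ast(\mm{PL})$ from Propositions \ref{C10} and \ref{C11} with the detection of the generators of $C^\ast(\mm{PL})$ in $H^\ast(\mm{G/PL})$ from Lemma \ref{K2primehomology}: a class in the image of $g_l^\ast$ is pulled back from $\mm{K}(\mb{Z},4)$ and is therefore disjoint from the $\mm{G/PL}$-detected part, while Lemmas \ref{BSPLhomologyZ} and \ref{BPL8homologyZ} ensure the comparison of the two sequences is clean at the prime $2$. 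I expect this transgression-avoidance step to be the main obstacle.

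Finally, I would upgrade the additive splitting to an isomorphism of right $\mathscr A$-modules. The right action is defined through the Thom isomorphism by (\ref{rightaction}), and the map of Thom spectra $\mm{MO}\langle 8\rangle\to \mm{MPL}\langle 8\rangle$ induced by $\mathscr D\langle 8\rangle$ is compatible with the stabilizations to $\mm{MO}$ and $\mm{MPL}$. Naturality of $\Psi$ and of the conjugation $\chi$ then transports the action, and formula (\ref{rightmodule}) on $H^\ast(\mm{BO}\langle 8\rangle)\otimes C^\ast(\mm{PL})$ is matched with the right action on $H^\ast(\mm{BPL}\langle 8\rangle)$ exactly as in the uncovered case and the case $n=4$, since the fibre $\mm{K}(\mb{Z},3)$ contributes identically on the two sides.
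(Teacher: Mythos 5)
Your overall strategy is the paper's: run the Serre spectral sequence of $\mm{K}(\mb{Z},3)\to \mm{BPL}\langle 8\rangle\to \mm{BPL}\langle 4\rangle$ against the path--loop fibration over $\mm{K}(\mb{Z},4)$ and the corresponding $\mm{BO}$-fibration, with the case $n=4$ of Theorem \ref{PLtoBO248} splitting the $E_2$-page, and then transport the right $\mathscr{A}$-action. You have also correctly isolated the real issue in the additive step, namely that the transgressions $\mm{Sq}^{\mm{I}}u$ must not acquire components in $H^\ast(\mm{BO}\langle 4\rangle)\otimes \bar C^\ast(\mm{PL})$; the paper is terser here, deducing from Stong's result that $g_o^\ast$, hence $g_l^\ast$, is a monomorphism and asserting that all differentials are determined by the two reference fibrations. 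Your proposed resolution via $\mm{G/PL}$ can be made to work, though not quite as you phrase it: the point is not that pulled-back classes are ``disjoint from the $\mm{G/PL}$-detected part,'' but that restriction along (a lift of) $\mm{G/PL}\to \mm{BPL}\langle 4\rangle$ kills $H^{10}(\mm{BO}\langle 4\rangle)\otimes 1$ and, by Lemma \ref{K2primehomology} together with Proposition \ref{C10}, is injective on $1\otimes C^{10}(\mm{PL})$, while $\mm{Sq}^4\mm{Sq}^2u$ restricts to $\mm{Sq}^4\mm{Sq}^2$ of the image of $w_4$, which vanishes on $\mm{G/PL}$; hence the $C^{10}$-component is zero.

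The genuine gap is in your final paragraph. Naturality of $\Psi$ and $\chi$ along $\mathscr{D}\langle 8\rangle$ only tells you that $\mathscr{D}\langle 8\rangle^\ast$ is a map of right $\mathscr{A}$-modules; since $\mathscr{D}\langle 8\rangle^\ast$ is the projection onto $H^\ast(\mm{BO}\langle 8\rangle)\otimes 1$, this determines the action on $H^\ast(\mm{BPL}\langle 8\rangle)$ only modulo the summand $H^\ast(\mm{BO}\langle 8\rangle)\otimes \bar C^\ast(\mm{PL})$, which is exactly where formula (\ref{rightmodule}) has content. You need a right-$\mathscr{A}$-module map that surjects onto all of $H^\ast(\mm{BPL}\langle 8\rangle)$ and whose source already carries the formula. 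The paper supplies this with $p_2^\ast:H^\ast(\mm{BPL}\langle 4\rangle)\to H^\ast(\mm{BPL}\langle 8\rangle)$: by Giambalvo, $p_1^\ast:H^\ast(\mm{BO}\langle 4\rangle)\to H^\ast(\mm{BO}\langle 8\rangle)$ is an epimorphism, so $p_1^\ast\otimes 1$ and hence $p_2^\ast$ are epimorphisms compatible with the tensor decompositions, and the known formula (\ref{rightmodule}) for $n=4$ descends along this surjection. Without this (or an equivalent) ingredient, your argument establishes the vector-space isomorphism but not the asserted right $\mathscr{A}$-module structure.
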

\begin{proof}
Let $g_o=\mathscr D\langle 4\rangle \circ g_l$. Then $g_o^\ast(\ell_4)=w_4$, where $\ell_4$ and
 $w_4$ are the generators of $H^4(\mm{K}(\mb{Z},4))$ and $H^4(\mm{BO}\langle 4\rangle)$. Recall that $H^\ast(\mm{K}(\mathbb{Z},4))$ is a polynomial ring with generators $\ell_4$ and $\mathrm{Sq}^\mm{I} \ell_4$, where $\mm{I}=(i_1,i_2,\cdots,i_r)$ is the admissible sequence satisfying
$ e(\mm{I})=2i_1-(i_1+\cdots +i_r )< 4$
and $i_r>1.$  By \cite{Stong1963}, 
$g_o^\ast:H^\ast(\mm{K}(\mathbb{Z},4))\to H^\ast(\mm{BO}\langle 4\rangle)$ is a monomorphism. Thus, $g_l^\ast:H^\ast(\mm{K}(\mathbb{Z},4))\to H^\ast(\mm{BPL}\langle 4\rangle)$ is also a monomorphism.
By the case $n=4$ of Theorem \ref{PLtoBO248}, all differentials of the spectral sequence
$${E}_2^{m+p,q}=H^{m+p}(\mm{BPL}\langle 4\rangle;H^q(\mm{K}(\mb{Z},3)))\Longrightarrow H^{m+p+q}(\mm{BPL}\langle 8\rangle)$$
are determined by the differentials of the  following spectral sequences
$$E_2^{p,q}=H^{p}(\mm{K}(\mb{Z},4);H^q(\mm{K}(\mb{Z},3)))\Longrightarrow H^{p+q}(\mm{PK}(\mb{Z},4))$$
$$E_2^{p,q}=H^{p}(\mm{BO}\langle 4\rangle;H^q(\mm{K}(\mb{Z},3)))\Longrightarrow H^{p+q}(\mm{BO}\langle 8\rangle).$$
Thus $H^\ast(\mm{BPL}\langle 8\rangle)\cong H^\ast(\mm{BO}\langle 8\rangle)\otimes C^\ast(\mm{PL})$
as vector spaces. The desired $\mathscr{A}$-module isomorphism follows by the commutative diagram
\[
\xymatrix@C=1cm{
H^\ast(\mm{BO}\langle 4\rangle)\otimes C^\ast(\mm{PL}) \ar[rr]^-{\cong}_-{right-\mathscr{A}-module}\ar[d]^-{p_1^\ast\otimes 1}_-{right-\mathscr{A}-module}& & H^\ast(\mathrm{BPL}\langle 4\rangle) \ar[d]_-{p_{2}^\ast}^-{right-\mathscr{A}-module}\\
  H^\ast(\mm{BO}\langle 8\rangle)\otimes C^\ast(\mm{PL}) \ar[rr]^-{\cong}&& H^\ast(\mm{BPL}\langle 8\rangle) 
  }
\]
where $p_1^\ast$ is an epimorphism \cite{Giambalvo}.  
\end{proof}

Indeed, by formula (\ref{rightaction}), the right $\mathscr{A}$ action on $H^\ast(\mm{BPL}\langle n\rangle
)$ is induced by the canonical left $\mathscr{A}$ action on $H^\ast(\mm{MPL}\langle n\rangle)$, where $\mm{MPL}\langle n\rangle$ is the Thom spectrum associated with the universal bundle over $\mm{BPL}\langle n\rangle$. 
\begin{corollary}\label{leftMPL}
$H^\ast(\mm{MPL}\langle n\rangle)\cong H^\ast(\mm{MO}\langle n\rangle)\otimes  C^\ast(\mm{PL})$ as canonical left $\mathscr{A}$-modules for $n=2,4,8$, and the formula $$\alpha  (b\otimes c)=\Sigma   \alpha^\prime b \otimes \alpha^{\prime \prime}c \quad \Delta(\alpha)=\Sigma\alpha^\prime \otimes \alpha^{\prime \prime}$$ presents the left $\mathscr{A}$ action on $H^\ast(\mm{MO}\langle n\rangle)\otimes  C^\ast(\mm{PL})$.\end{corollary}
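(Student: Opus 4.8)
The plan is to deduce the statement directly from the right $\mathscr{A}$-module isomorphism of Theorem \ref{PLtoBO248} by transporting it through the Thom isomorphisms, exactly as the sentence preceding the corollary anticipates. Write $\Psi_{\mm{PL}}:H^\ast(\mm{BPL}\langle n\rangle)\to H^\ast(\mm{MPL}\langle n\rangle)$ and $\Psi_{\mm{O}}:H^\ast(\mm{BO}\langle n\rangle)\to H^\ast(\mm{MO}\langle n\rangle)$ for the two Thom isomorphisms. The first thing I would record is that, by the very definition (\ref{rightaction}) of the right action, each $\Psi$ converts the right $\mathscr{A}$-action on the base into the canonical left $\mathscr{A}$-action on the associated Thom spectrum, twisted by the conjugation: applying $\Psi$ to $x\cdot\alpha=\Psi^{-1}(\chi(\alpha)\Psi(x))$ yields
\begin{equation}
\Psi(x\cdot\alpha)=\chi(\alpha)\,\Psi(x). \label{intertwine}
\end{equation}

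Next I would form the composite vector-space isomorphism
$$\Theta=\Psi_{\mm{PL}}\circ\phi\circ(\Psi_{\mm{O}}^{-1}\otimes 1):\ H^\ast(\mm{MO}\langle n\rangle)\otimes C^\ast(\mm{PL})\longrightarrow H^\ast(\mm{MPL}\langle n\rangle),$$
where $\phi$ denotes the right $\mathscr{A}$-module isomorphism of Theorem \ref{PLtoBO248}. That $\Theta$ is a vector-space isomorphism is immediate for $n=2,4,8$, since all three ingredients are; the restriction to these values of $n$ enters only through the availability of $\phi$. Since the module structure on the target is the tautological one, the whole content is to verify that $\Theta$ intertwines this canonical left action with the diagonal formula on the source. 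I would set $B=\Psi_{\mm{O}}(b)$, so that by (\ref{intertwine}) for $\mm{O}$ the right action $b\cdot\beta$ corresponds to $\chi(\beta)B$. To compute the left action of $\alpha$ on $\Theta(B\otimes c)=\Psi_{\mm{PL}}(\phi(b\otimes c))$, I use (\ref{intertwine}) for $\mm{PL}$ in the form $\alpha\,\Psi_{\mm{PL}}(x)=\Psi_{\mm{PL}}(x\cdot\chi(\alpha))$ (legitimate since $\chi$ is an involution), then push $\chi(\alpha)$ through the right $\mathscr{A}$-module map $\phi$ and expand by formula (\ref{rightmodule}).

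The heart of the argument is the resulting coproduct bookkeeping. Using the antipode--coproduct compatibility in the form $\Sigma\,\chi(\alpha)^\prime\otimes\chi(\alpha)^{\prime\prime}=\Sigma\,\chi(\alpha^{\prime\prime})\otimes\chi(\alpha^\prime)$ together with $\chi^2=\mm{id}$, the expansion of $(b\otimes c)\cdot\chi(\alpha)$ collapses to $\Sigma\,b\cdot\chi(\alpha^{\prime\prime})\otimes\alpha^\prime c$; translating the first factor back via $B$ and (\ref{intertwine}) turns $b\cdot\chi(\alpha^{\prime\prime})$ into $\alpha^{\prime\prime}B$. Hence $\alpha\bigl(\Theta(B\otimes c)\bigr)=\Theta\bigl(\Sigma\,\alpha^{\prime\prime}B\otimes\alpha^\prime c\bigr)$, and a final appeal to the cocommutativity of the mod $2$ Steenrod algebra $\mathscr{A}$ rewrites the right-hand side as $\Sigma\,\alpha^\prime B\otimes\alpha^{\prime\prime}c$, which is precisely the claimed left action. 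I expect the only delicate point to be exactly this juggling of the two conjugations—the one built into (\ref{rightmodule}) and the one introduced by the Thom isomorphism—and invoking cocommutativity at the very end to land on the untwisted diagonal formula; everything else is formal, and the same computation applies verbatim for each of $n=2,4,8$.
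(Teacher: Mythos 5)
Your proposal is correct and follows exactly the route the paper intends: the paper offers no written proof beyond the sentence preceding the corollary, which asserts that formula (\ref{rightaction}) converts the right action on the base into the canonical left action on the Thom spectrum, and your argument is a careful formalization of precisely that, with the conjugation bookkeeping ($\Delta\circ\chi$ reversing the coproduct, $\chi^2=\mm{id}$, and cocommutativity of $\mathscr{A}$) all handled correctly. No discrepancy with the paper's approach.
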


Now, we discuss some representatives of $\Omega^{\mm{O}\langle 8\rangle}_\ast=\pi_\ast(\mm{MO} \langle  8 \rangle )$.
Recall that, in dimension $i \le 14$ \cite{Giambalvo,HoRa1995}, $\pi_i(\mm{MO} \langle  8 \rangle )$ is as follows.

\begin{center}
\begin{tabular}{|c|c|c|c|c|c|c|c|c|c|c|c|c|c|c|c|}
\hline
$i$ & 0&1& 2& 3&4& 5& 6&7&8&9\\
\hline
$\pi_i(\mm{MO} \langle  8\rangle )$ & $\mathbb{Z}$ & $\mathbb{Z}_2$ & $\mathbb{Z}_2$ & $\mathbb{Z}_{24}$& 0& 0& $\mathbb{Z}_2$ & 0& $ \mathbb{Z}\oplus \mathbb{Z}_2$ & $\mathbb{Z}_2\oplus \mathbb{Z}_2 $\\
\hline

\end{tabular}
\end{center}
\begin{center}
\begin{tabular}{|c|c|c|c|c|c|c|c|c|c|c|c|c|c|c|c|}
\hline
$i$ &10&11&12&13&14\\
\hline
$\pi_i(\mm{MO} \langle  8 \rangle )$  & $\mathbb{Z}_6 $ & 0& $\mathbb{Z}$ & $\mathbb{Z}_3$ & $\mathbb{Z}_2$\\
\hline

\end{tabular}
\end{center}

\begin{lemma}\label{exoticsphere}
	Any torsion element $\sigma_n\in \pi_n(\mm{MO} \langle  8 \rangle)$ for $n=8,9,10$ is represented by an exotic sphere $\Sigma^n$.
\end{lemma}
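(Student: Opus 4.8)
The plan is to show that each torsion class in $\pi_n(\mm{MO}\langle 8\rangle)$ for $n=8,9,10$ can be realized by a homotopy sphere carrying a string structure, using the standard dictionary between string bordism and highly connected manifolds together with the low-dimensional surgery obstruction theory. First I would recall that an element $\sigma_n\in\pi_n(\mm{MO}\langle 8\rangle)=\Omega_n^{\mm{O}\langle 8\rangle}$ is represented by a closed smooth $n$-manifold $N^n$ together with a lift of its stable normal bundle to $\mm{BO}\langle 8\rangle$; in particular $N$ is a string manifold, so $w_1=w_2=0$ and $p_1/2=0$, and $N$ may be taken $3$-connected after performing surgery below the middle dimension. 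The key point is that for $n=8,9,10$ the middle dimension is small enough that the only remaining homology sits in a narrow band, and the torsion part of the bordism group is detected entirely by the attaching data of the top cell.

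The main steps, in order, are as follows. First I would use surgery below the middle dimension to replace the representative $N^n$ by a highly connected one: since a string manifold is already $2$-connected after surgery on $1$- and $2$-handles (the string condition guarantees the relevant obstructions vanish), one makes $N$ into a $3$-connected manifold in its bordism class. Second, for $n=8,9,10$ one checks that a \emph{torsion} class forces the middle-dimensional homology to be killable: the free part of $\pi_n(\mm{MO}\langle 8\rangle)$ in these dimensions comes from the signature/Euler-class data (the $\mb{Z}$ summands in degrees $8$ and $12$), so a torsion element has vanishing rational invariants, which means the middle Betti numbers of the representative can be reduced to zero by surgery. Third, once $N$ is made $(n-1)$-connected it is a homotopy sphere by Poincaré duality and the $h$-cobordism/Smale theory, and a homotopy $n$-sphere in these dimensions is an exotic sphere $\Sigma^n$ (possibly the standard one). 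Finally I would confirm the string structure survives the surgeries, which is automatic because all the surgeries are performed in the bordism group $\Omega_n^{\mm{O}\langle 8\rangle}$ itself, so the $\mm{BO}\langle 8\rangle$-lift is carried along.

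The delicate arithmetic input is matching the concrete torsion groups $\mb{Z}_2$ (in degree $8$), $\mb{Z}_2\oplus\mb{Z}_2$ (degree $9$), and $\mb{Z}_6$ (degree $10$) against the group $\Theta_n$ of homotopy $n$-spheres and its subgroup $bP_{n+1}$, so that each torsion class is accounted for by an actual (possibly exotic) sphere rather than merely a connected sum that happens to be nullbordant. Here one uses that the forgetful map $\Theta_n\to\Omega_n^{\mm{O}\langle 8\rangle}$, restricted to the string-admissible spheres, lands in and surjects onto the torsion subgroup in these dimensions; the exotic sphere contributions in dimensions $8,9,10$ are well documented and the string structure on a homotopy sphere is essentially unique since the sphere is highly connected.

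The hard part will be the second step: verifying that a torsion representative can genuinely be surgered down to a homotopy sphere, i.e.\ that the middle-dimensional surgery obstruction (an Arf/Kervaire invariant in the odd-middle case or a signature-type obstruction in the even-middle case) vanishes precisely on the torsion classes. For $n=8$ the middle dimension is $4$ and one must check the intersection form can be trivialized on a torsion class; for $n=10$ the Kervaire invariant enters, and I would need the fact—available here since $\pi_{10}(\mm{MO}\langle 8\rangle)=\mb{Z}_6$ has its $2$-torsion detected appropriately—that the obstruction is compatible with realizing the class by a sphere. Establishing this vanishing, rather than the formal surgery machinery, is where the real content lies, and I would lean on the explicit low-dimensional computations of $\pi_\ast(\mm{MO}\langle 8\rangle)$ cited above (Giambalvo, Hovey–Ravenel) to pin down the generators and identify each torsion generator with a specific exotic sphere.
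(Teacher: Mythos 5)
Your outline takes a genuinely different route from the paper, but as written it has a real gap exactly where you yourself locate ``the real content.'' The paper never surgers a general representative down to a sphere. Instead it argues in the opposite direction: it uses the Kervaire--Milnor sequence $0\to \mm{bP}_{n+1}\to\Theta_n\to \mm{coker}(J_n)\to 0$ together with the Adams--Quillen description of $\mm{coker}(J_n)$ for $n=8,9,10$ to see that every element of $\mm{coker}(J_n)$ is carried by a homotopy sphere, and then checks by explicit Adams $E_2$ names that the unit $\iota\colon S^0\to \mm{MO}\langle 8\rangle$ hits all the torsion: $\iota_\ast(\phi)=c_0$, $\iota_\ast(\kappa)=h_1\omega$, and $\iota_\ast$ is an isomorphism on $\pi_{10}$. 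Since a homotopy $n$-sphere ($n\ge 8$) has a unique $\mm{BO}\langle 8\rangle$-structure, this immediately exhibits each torsion class as the class of an exotic sphere. That argument is short precisely because it imports the hard geometry from Kervaire--Milnor rather than redoing it.

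The gap in your plan is the second step. For $n=10$ the obstruction to killing $H_5$ of a $4$-connected representative is the Arf/Kervaire invariant, and its vanishing for \emph{smooth} closed $4$-connected $10$-manifolds is exactly Kervaire's theorem --- a deep external input that you must cite; the parenthetical ``$\pi_{10}(\mm{MO}\langle 8\rangle)=\mb{Z}_6$ has its $2$-torsion detected appropriately'' is not an argument, and in a paper whose whole point is the Kervaire-invariant-one phenomenon you cannot leave this implicit without risking circularity. For $n=8$, vanishing signature gives you a Lagrangian of the intersection form, but to perform the surgeries you still need each basis element represented by an \emph{embedded} $4$-sphere with \emph{unstably} trivial normal bundle (this uses $p_1=0$ from the string condition and the analysis of $\pi_3(\mm{SO}(4))$ as in Wall's work on $3$-connected $8$-manifolds), and you must verify the surgeries do not reintroduce middle homology; none of this is in the proposal. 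Finally, your ``delicate arithmetic input'' --- matching each torsion generator to a specific element of $\Theta_n$ --- is asserted rather than carried out, whereas the paper pins it down by naming the images $c_0$ and $h_1\omega$ in $\mm{Ext}$. If you want to keep the surgery route, you need to supply (a) Kervaire's non-existence theorem for $n=10$, (b) the embedded-sphere and normal-bundle argument for $n=8$, and (c) the odd-dimensional surgery argument for $n=9$; otherwise the unit-map argument is both shorter and complete.
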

\begin{proof}
By \cite{Kerva1960} and \cite{KervaMilnor}, we have the exact sequence
$$0\to \mm{bP}_{n+1}\to \Theta_n\to coker({J}_n)\to 0$$
for $ n=8,9,10$ where $J_n:\pi_n(\mm{SO})\to \pi_n(S^0)$. By \cite{Adams1966} and \cite{Quillen}, $coker(J_n)$ has the following generators
$$\phi \in \mathbb{Z}_2\subset \pi_8(S^0),\quad \eta\phi  \in \mathbb{Z}_2\subset \pi_9(S^0),$$
$$ \kappa  \in \mathbb{Z}_2\subset \pi_9(S^0), \quad \eta\kappa  \in \mathbb{Z}_2\subset \pi_{10}(S^0), \quad \gamma  \in \mathbb{Z}_3\subset \pi_{10}(S^0)$$
where $\eta\in \pi_1(S^0)$ is represented by $h_1$ in the ASS for $S^0$. Note that exotic sphere $\Sigma^n$ has the trivial $\mm{BO}\langle 8\rangle$-structure. Considering the unit $\iota:S^0\to \mm{MO}\langle 8 \rangle$, we have that $\iota_\ast (\phi)=c_0$, $\iota_\ast(\kappa)=h_1\omega $, $\pi_{10}(S^0)\cong \pi_{10}(\mm{MO}\langle 8\rangle)$ 
by \cite{Giambalvo,HoRa1995} and \cite{Ravenel1986}.
	Thus, the lemma follows.
\end{proof}

Next, we consider the ASS for ${_2}\pi_\ast(\mm{MPL}\langle 8\rangle)$. 
Recall the $E_2$-terms of the Adams spectral sequence (ASS) for $\mm{MO} \langle  8 \rangle $ as in Fig.\ref{Fig.1}. From \cite{Giambalvo}, we have $H^\ast(\mm{MO}\langle 8\rangle)\cong \mathscr{A}//\mathscr{A}_2$ as modules over the Steenrod algebra $\mathscr A$ in dimensions $<20$, thus
$$\mm{Ext}^{s,t}_{\mathscr{A}}(H^\ast(\mm{MO}\langle 8\rangle), \mathbb{Z}_2)\cong \mm{Ext}^{s,t}_{\mathscr{A}_2}(\mb{Z}_2, \mathbb{Z}_2)$$
for $t-s\le 17$, where $\mathscr{A}_2$ is a subalgebra of $\mathscr A$, generated by $\mm{Sq}^1$, $\mm{Sq}^2$ and $\mm{Sq}^4$.
By Corollary \ref{leftMPL} and the change of ring,
$$\mm{Ext}^{s,t}_{\mathscr{A}}(H^\ast(\mm{MPL}\langle 8\rangle), \mathbb{Z}_2)\cong \mm{Ext}^{s,t}_{\mathscr{A}_2}(C^\ast(\mm{PL}), \mathbb{Z}_2)\quad t-s\le 17.$$
\begin{figure}
\centering
\begin{tikzpicture}[>=stealth,scale=1,line width=0.5pt]{enumerate}

\pgfmathsetmacro{\ticker}{0.125}

\coordinate [label=225:$0$](A) at (0,0);
\coordinate (B) at (0,5.6);
\coordinate (D) at (10.2,0);
\draw(D)--(A)--(B);

\coordinate [label=left:$s$](E) at ($(B)+(-0.4,-0.2)$);
\coordinate [label=below:$t-s$](F) at ($(D)+(-0.2,-0.4)$);

\foreach \i/\texti  in {1,2,3,4,5,6,7,8,9,10,11,12,13,14,15,16,17} {
\draw (0.6*\i,0) --(0.6*\i,\ticker) node[label=below:\texti]{};
}
\foreach \j/\textj  in {1,2,3,4,5,6,7,8} {
\draw (0,0.7*\j) --(\ticker,0.7*\j) node[label=left:\textj]{};
}
{\tiny
\coordinate[label=right:$h_0$] (I) at (0.1,0.7);
}
{\tiny
\coordinate[label=right:$h^2_0$] (I) at (0.1,1.4);
}
\path[name path = row1](0,0.7)--(10.2,0.7);
\path[name path = row2](0,1.4)--(10.2,1.4);
\draw[name path = row3][dashed] (0,2.1) -- (10.2,2.1);
\draw[name path = xieh1](0,0)--(0.6,0.7)--(1.2,1.4)--(1.8,2.1);
\path[draw,fill,name intersections={of = row1 and xieh1,by=h1}](h1)circle(1pt);
{\tiny
\coordinate[label=right:$h_1$] (I) at (0.7,0.7);
}
\path[draw,fill,name intersections={of = row2 and xieh1,by=h12}](h12)circle(1pt);
{\tiny
\coordinate[label=left:$h^2_1$] (I) at (1.1,1.4);
}
\path[draw,fill,name intersections={of = row3 and xieh1,by=h13}](h13)circle(1pt);
{\tiny
\coordinate[label=above:${h^3_1=h^2_0h_2}$] (I) at (1.8,2.1);
}
\draw[name path = c3] (1.8,0.7) -- (h13);
\path[draw,fill,name intersections={of = c3 and row2,by=h0h2}](h0h2)circle(1pt);
\path[draw,fill,name intersections={of = c3 and row1,by=h2}](h2)circle(1pt);
{\tiny
\coordinate[label=left:${h_2}$] (I) at (1.7,0.8);
}
\draw[name path = xieh2](0,0)--(h2)--(3.6,1.4);
\path[draw,fill,name intersections={of = xieh2 and row2,by=h22}](h22)circle(1pt);
{\tiny
\coordinate[label=left:${h^2_2}$] (I) at (3.5,1.5);
}
\path[name path = row4](0,2.8)--(10.2,2.8);
\path[name path = row5](0,3.5)--(10.2,3.5);
\path[name path = row6](0,4.2)--(10.2,4.2);
\path[name path = row7](0,4.9)--(10.2,4.9);
\path[name path = row8](0,5.6)--(10.2,5.6);
\path[name path = c8](4.8,0)--(4.8,5.6);
\path[draw,fill,name intersections={of = c8 and row3,by=c0}](c0)circle(1pt);
{\tiny
\coordinate[label=left:${c_0}$] (I) at (4.7,2.0);
}
\path[draw,fill,name intersections={of = xieh2 and row2,by=h22}](h22)circle(1pt);
\draw(4.8,2.8)--(4.8,5.6);
\path[draw,fill,name intersections={of = c8 and row4,by=omega}](omega)circle(1pt);
{\tiny
\coordinate[label=left:${\omega}$] (I) at (4.7,2.8);
}
\path[draw,fill,name intersections={of = c8 and row5,by=h0omega}](h0omega)circle(1pt);
\path[draw,fill,name intersections={of = c8 and row6,by=h02omega}](h02omega)circle(1pt);
\path[draw,fill,name intersections={of = c8 and row7,by=h03omega}](h03omega)circle(1pt);
\path[draw,fill,name intersections={of = c8 and row8,by=h04omega}](h04omega)circle(1pt);
\draw[name path = xiec0](4.8,2.1)--(5.4,2.8);
\path[draw,fill,name intersections={of = row4 and xiec0,by=h1c0}](h1c0)circle(1pt);
\draw[name path = xieomega](4.8,2.8)--(6.6,4.9);
\path[draw,fill,name intersections={of = row5 and xieomega,by=h1omega}](h1omega)circle(1pt);
\path[draw,fill,name intersections={of = row6 and xieomega,by=h12omega}](h12omega)circle(1pt);
\path[draw,fill,name intersections={of = row7 and xieomega,by=h13omega}](h13omega)circle(1pt);
{\tiny
\coordinate[label=above:${h_1^3\omega=h^2_0h_2\omega}$] (I) at (6.3,4.9);
}
\draw[name path = c11](h13omega)--(6.6,3.5);
\path[draw,fill,name intersections={of = row5 and c11,by=h2omega}](h2omega)circle(1pt);
\path[draw,fill,name intersections={of = row6 and c11,by=h0h2omega}](h0h2omega)circle(1pt);
{\tiny
\coordinate[label=left:${h_2\omega}$] (I) at (6.6,3.6);
}
\draw[name path = xie2omega](omega)--(8.4,4.2);
\path[draw,fill,name intersections={of = row6 and xie2omega,by=h22omega}](h22omega)circle(1pt);
{\tiny
\coordinate[label=above:${h^2_2\omega=h^2_0d_0}$] (I) at (h22omega);
}
\draw[name path = c12](7.2,2.1)--(7.2,5.6);
\path[draw,fill,name intersections={of = row3 and c12,by=tau}](tau)circle(1pt);
\path[draw,fill,name intersections={of = row4 and c12,by=h0tau}](h0tau)circle(1pt);
\path[draw,fill,name intersections={of = row5 and c12,by=h02tau}](h02tau)circle(1pt);
\path[draw,fill,name intersections={of = row6 and c12,by=h03tau}](h03tau)circle(1pt);
\path[draw,fill,name intersections={of = row7 and c12,by=h04tau}](h04tau)circle(1pt);
\path[draw,fill,name intersections={of = row8 and c12,by=h05tau}](h05tau)circle(1pt);
{\tiny
\coordinate[label=left:${\tau}$] (I) at (7.1,2);
}
\draw[name path = c14](h22omega)--(8.4,2.8);
\path[draw,fill,name intersections={of = row4 and c14,by=d0}](d0)circle(1pt);
\path[draw,fill,name intersections={of = row5 and c14,by=h0d0}](h0d0)circle(1pt);
{\tiny
\coordinate[label=below:${d_0}$] (I) at (8.3,2.8);
}
\draw[name path = xied0](d0)--(9,3.5);
{\tiny
\coordinate[label=right:${h^2_0\kappa=h_1d_0}$] (I) at (9,3.5);
}
\path[draw,fill,name intersections={of = row5 and xied0,by=h1d0}](h1d0)circle(1pt);
\draw[name path = c15](h1d0)--(9,2.1);
\path[draw,fill,name intersections={of = row3 and c15,by=kappa}](kappa)circle(1pt);
\path[draw,fill,name intersections={of = row4 and c15,by=h0kappa}](h0kappa)circle(1pt);
{\tiny
\coordinate[label=left:${\kappa}$] (I) at (8.9,2);
}
\draw[dashed](7.2,2.1)--(6.6,3.5);
\draw[dashed](7.2,2.8)--(6.6,4.2);\draw[dashed](7.2,3.5)--(6.6,4.9);
\draw[dashed](9,2.1)--(8.4,3.5);\draw[dashed](9,2.8)--(8.4,4.2);

\end{tikzpicture}
\renewcommand{\figurename}{Fig}
 \caption{ $\mm{Ext}^{s,t}_{\mathscr{A}}(H^\ast(\mm{MO}\langle 8\rangle), \mathbb{Z}_2)$}
  \label{Fig.1}
\end{figure}
For $\mm{Ext}^{s,t}_{\mathscr{A}_2}(C^\ast(\mm{PL}), \mathbb{Z}_2)$,  
there is an AAHSS 
$$E_1^{s,m,n}(\mm{PL}8)=\mm{Ext}^{s,s+m}_{\mathscr{A}_2}(\mb{Z}_2, \mathbb{Z}_2)\otimes C^n(\mm{PL})$$
\begin{equation}
	E_1^{s,m,n}(\mm{PL}8)\Longrightarrow \mm{Ext}_{\mathscr{A}_2}^{s,s+m+n}(C^\ast(\mm{PL}),\mb{Z}_2) \label{AAHSSPL8}
\end{equation}
$$d_r:E_r^{s,m,n}\to E_r^{s+1,m+r-1,n-r}$$

\begin{lemma}\label{MOtoPLExt}
	The natural homomorphism 
	$$\mathscr D\langle 8\rangle_\ast :\mm{Ext}^{s,t}_{\mathscr{A}}(H^\ast(\mm{MO}\langle 8\rangle), \mathbb{Z}_2)\to \mm{Ext}^{s,t}_{\mathscr{A}}(H^\ast(\mm{MPL}\langle 8\rangle), \mathbb{Z}_2)$$
	is injective for $t-s\le 11$.
\end{lemma}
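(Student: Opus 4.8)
The plan is to identify the map $\mathscr{D}\langle 8\rangle_\ast$, under the change-of-rings isomorphisms already recorded before the statement, with the map on $\mathrm{Ext}_{\mathscr{A}_2}$ induced by the augmentation $\epsilon\colon C^\ast(\mm{PL})\to\mb{Z}_2$, and then to exhibit an $\mathscr{A}$-module splitting of $\epsilon$ so that this induced map is a split monomorphism.

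First I would trace the geometric map. The map $\mathscr{D}\langle 8\rangle\colon \mm{BO}\langle 8\rangle\to\mm{BPL}\langle 8\rangle$ induces $\mm{MO}\langle 8\rangle\to\mm{MPL}\langle 8\rangle$ on Thom spectra, hence a homomorphism $H^\ast(\mm{MPL}\langle 8\rangle)\to H^\ast(\mm{MO}\langle 8\rangle)$. By the construction of the isomorphism (\ref{PLtoBO}) and its refinement in Corollary \ref{leftMPL}, this homomorphism is $1\otimes\epsilon$, where $\epsilon$ is the augmentation of the Hopf algebra $C^\ast(\mm{PL})$; that is, it is the projection of $H^\ast(\mm{MO}\langle 8\rangle)\otimes C^\ast(\mm{PL})$ onto the factor $H^\ast(\mm{MO}\langle 8\rangle)\otimes C^0(\mm{PL})=H^\ast(\mm{MO}\langle 8\rangle)$. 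Applying $\mathrm{Ext}_{\mathscr{A}}(-,\mb{Z}_2)$, together with the isomorphisms $\mathrm{Ext}_{\mathscr{A}}(H^\ast(\mm{MO}\langle 8\rangle),\mb{Z}_2)\cong\mathrm{Ext}_{\mathscr{A}_2}(\mb{Z}_2,\mb{Z}_2)$ and $\mathrm{Ext}_{\mathscr{A}}(H^\ast(\mm{MPL}\langle 8\rangle),\mb{Z}_2)\cong\mathrm{Ext}_{\mathscr{A}_2}(C^\ast(\mm{PL}),\mb{Z}_2)$ valid for $t-s\le 17$, I would check that $\mathscr{D}\langle 8\rangle_\ast$ becomes the map $\epsilon^\ast\colon \mathrm{Ext}_{\mathscr{A}_2}(\mb{Z}_2,\mb{Z}_2)\to\mathrm{Ext}_{\mathscr{A}_2}(C^\ast(\mm{PL}),\mb{Z}_2)$ induced by $\epsilon$.

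Next I would produce the splitting. Since $C^\ast(\mm{PL})$ is an algebra over $\mathscr{A}$ and $C^i(\mm{PL})=0$ for $1\le i\le 7$ (Theorem \ref{CastPL}), the Cartan formula and an easy induction give $\mm{Sq}^k(1)=0$ for every $k\ge 1$: one has $\mm{Sq}^k(1)=\sum_{0<i<k}\mm{Sq}^i(1)\,\mm{Sq}^{k-i}(1)$, and the base case $\mm{Sq}^1(1)=0$ holds because $C^1(\mm{PL})=0$. Hence the unit $\mb{Z}_2\cdot 1=C^0(\mm{PL})$ is an $\mathscr{A}$-submodule, and since the Steenrod operations raise degree the augmentation ideal $\overline{C}^\ast(\mm{PL})=C^{>0}(\mm{PL})$ is an $\mathscr{A}$-submodule as well. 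Therefore $C^\ast(\mm{PL})\cong\mb{Z}_2\oplus\overline{C}^\ast(\mm{PL})$ as $\mathscr{A}$-modules, and a fortiori as $\mathscr{A}_2$-modules, with $\epsilon$ and the unit realizing the projection onto and the inclusion of the $\mb{Z}_2$ summand.

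Consequently $\mathrm{Ext}_{\mathscr{A}_2}(C^\ast(\mm{PL}),\mb{Z}_2)\cong\mathrm{Ext}_{\mathscr{A}_2}(\mb{Z}_2,\mb{Z}_2)\oplus\mathrm{Ext}_{\mathscr{A}_2}(\overline{C}^\ast(\mm{PL}),\mb{Z}_2)$ and $\epsilon^\ast$ is the inclusion of the first summand, so it is a split monomorphism; this yields the injectivity of $\mathscr{D}\langle 8\rangle_\ast$ throughout the change-of-rings range $t-s\le 17$, which in particular covers $t-s\le 11$. In the language of the spectral sequence (\ref{AAHSSPL8}), the splitting says that the $n=0$ column $E_1^{s,m,0}=\mm{Ext}^{s,s+m}_{\mathscr{A}_2}(\mb{Z}_2,\mb{Z}_2)$ is a direct summand that neither is hit by nor supports any differential, and $\mathscr{D}\langle 8\rangle_\ast$ is precisely the inclusion of this column. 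The step I expect to be the main obstacle is the first one: carefully verifying that the purely topological map $\mathscr{D}\langle 8\rangle_\ast$ is intertwined with $\epsilon^\ast$ by the two change-of-rings isomorphisms, i.e. the naturality of the untwisting isomorphism $(\mathscr{A}\otimes_{\mathscr{A}_2}\mb{Z}_2)\otimes M\cong\mathscr{A}\otimes_{\mathscr{A}_2}M$ with respect to the $\mathscr{A}$-module map $1\otimes\epsilon$; once this compatibility is in place, the remaining algebra is formal.
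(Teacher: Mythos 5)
Your proof is correct, but it takes a genuinely different route from the paper's. The paper argues in two steps: for $t-s\le 7$ it uses that $\mathscr{D}\langle 8\rangle:\mm{MO}\langle 8\rangle\to\mm{MPL}\langle 8\rangle$ is a $7$-equivalence, and for $8\le t-s\le 11$ it invokes the vanishing $C^n(\mm{PL})=0$ for $1\le n\le 7$ together with the degrees of the differentials $d_r:E_r^{s,m,n}\to E_r^{s+1,m+r-1,n-r}$ in the AAHSS (\ref{AAHSSPL8}) to conclude that the $n=0$ column survives in that range. You instead observe that the unit $\mb{Z}_2=C^0(\mm{PL})$ is an $\mathscr{A}$-module direct summand of $C^\ast(\mm{PL})$ (equivalently, $b\mapsto b\otimes 1$ splits $1\otimes\epsilon$ even under the twisted action of Corollary \ref{leftMPL}, since $\alpha''\cdot 1=\epsilon_{\mathscr A}(\alpha'')$), so $\mathscr{D}\langle 8\rangle^\ast$ is a split epimorphism of $\mathscr{A}$-modules and hence $\mathscr{D}\langle 8\rangle_\ast$ is a split monomorphism on $\mm{Ext}$ in \emph{all} bidegrees --- indeed your argument does not even need the change-of-rings isomorphism or the restriction $t-s\le 17$. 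This buys more than the paper's proof and is also more robust: a literal reading of ``by the degrees of differentials'' does not by itself exclude, say, a $d_9$ from $h_0^2\otimes p_{9,i}\in E_9^{2,0,9}$ hitting $c_0\in E_9^{3,8,0}$ (one must additionally use the $\mm{Ext}_{\mathscr{A}_2}(\mb{Z}_2,\mb{Z}_2)$-module structure of the spectral sequence, or precisely your splitting, to kill such possibilities). The only point at which your write-up leans on something the paper leaves implicit is the identification of $\mathscr{D}\langle 8\rangle^\ast$ with $1\otimes\epsilon$ under the isomorphism of Corollary \ref{leftMPL}; you correctly flag this, and it is indeed how the Browder--Liulevicius--Peterson splitting (\ref{PLtoBO}) is constructed and how the paper itself uses it (e.g.\ in reading off the cokernel in Lemma \ref{ExtMPL8}), so the gap is one of citation rather than substance.
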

\begin{proof}
	Since $\mathscr D\langle 8\rangle:\mm{MO}\langle 8\rangle\to \mm{MPL}\langle 8\rangle$ is $7$-equivalent, it is easy to get the lemma for $t-s\le 7$.
	
Note that $C^n(\mm{PL})=0$ for $n<8$. By the degrees of differentials of the AAHSS (\ref{AAHSSPL8}), the lemma follows	for $8\le t-s\le 11$. 
\end{proof}

By the $\mm{Sq}^1$, $\mm{Sq}^2$ and $\mm{Sq}^4$ actions on $C^\ast(\mm{PL})$ (shown in Section \ref{CPL}) and the AAHSS (\ref{AAHSSPL8}), we have
\begin{lemma}\label{ExtMPL8}
	Partial cokernel of $\mathscr D\langle 8\rangle_\ast$ is shown as follows 
\end{lemma}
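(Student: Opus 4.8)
The plan is to read the cokernel directly off the algebraic Atiyah--Hirzebruch spectral sequence (\ref{AAHSSPL8}), whose $E_1$-term splits as $\mm{Ext}^{s,s+m}_{\mathscr{A}_2}(\mb{Z}_2,\mb{Z}_2)\otimes C^n(\mm{PL})$. The first factor is the chart of Fig.\ \ref{Fig.1}, and in the range $n\le 11$ the second factor is described completely by Theorem \ref{CastPL}, with explicit generators $p_8$, $p_{9,1}$, $p_{9,2}$, $p_{10,1}$, $p_{10,2}$, $p_{10,3}$, $p_{11,1}$, $p_{11,3}$ produced in Propositions \ref{C10} and \ref{C11}. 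Since $C^n(\mm{PL})=0$ for $0<n<8$, the map $\mathscr D\langle 8\rangle_\ast$ is exactly the inclusion of the $n=0$ column $\mm{Ext}_{\mathscr{A}_2}(\mb{Z}_2,\mb{Z}_2)\otimes C^0(\mm{PL})$, all of whose classes are permanent cycles detecting $\mm{MO}\langle 8\rangle$. By Lemma \ref{MOtoPLExt} this inclusion is injective for $t-s\le 11$, so no differential in this range leaves the $n=0$ column, and the cokernel is precisely the contribution of the columns $n\ge 8$ to $E_\infty$.

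The key step is to determine the differentials on the columns $n\ge 8$ from the $\mathscr{A}_2$-module structure of $C^\ast(\mm{PL})$ computed in Section \ref{CPL}. Filtering by the $C^\ast(\mm{PL})$-degree, and using that $h_0,h_1,h_2$ correspond to $\mm{Sq}^1,\mm{Sq}^2,\mm{Sq}^4$, a nonzero relation $\mm{Sq}^{2^i}c'=c$ produces the differential $d_{2^i}(1\otimes c)=h_i\otimes c'$. I would then feed in the calculations of Section \ref{CPL}: the relation $\mm{Sq}^2p_8=p_{10,1}$ (Proposition \ref{C10}) gives a $d_2$ on $p_{10,1}$ hitting $h_1\otimes p_8$; and $\mm{Sq}^1p_{9,2}=p_{10,2}$, $\mm{Sq}^1p_{10,1}=p_{11,1}$, $\mm{Sq}^1p_{10,3}=p_{11,3}$ give $d_1$'s mediated by $h_0$, which propagate up the respective $h_0$-towers. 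The vanishing statements $\mm{Sq}^1p_8=\mm{Sq}^1p_{9,1}=\mm{Sq}^1p_{10,2}=0$ and $\mm{Sq}^2p_{9,1}=\mm{Sq}^2p_{9,2}=0$ then show that the remaining bottom classes support no differential. Taking homology with respect to these $d_r$ displays $E_\infty$ on the columns $n\ge 8$, i.e.\ the asserted cokernel.

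The main obstacle will be the bookkeeping of the $h_0$-towers together with the relation $h_0h_1=0$ in Fig.\ \ref{Fig.1}: because $d_2(h_0\cdot x)=h_0\,d_2(x)$ and $h_0h_1=0$, the $d_2$ coming from $\mm{Sq}^2p_8$ annihilates only the bottom class $p_{10,1}$, whereas the $d_1$'s from the $\mm{Sq}^1$-relations sweep out entire $h_0$-towers; one must track exactly which power of $h_0$ first survives on each generator. A secondary point is to rule out further differentials in the band $8\le t-s\le 11$ and to locate the $d_4$ coming from $\mm{Sq}^4p_8\ne 0$, which lands in degree $12$ just above the stated range. Both reduce to checking that no other nonzero $\mm{Sq}^{2^i}$ occurs among the listed generators, which is forced by the dimension counts of Theorem \ref{CastPL} and Lemma \ref{VdePro}. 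With this in hand the chart of the cokernel is determined.
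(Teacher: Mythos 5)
Your proposal follows the paper's own (one‑sentence) argument exactly: the paper derives the table from the $\mm{Sq}^1,\mm{Sq}^2,\mm{Sq}^4$ actions on $C^\ast(\mm{PL})$ computed in Section \ref{CPL} together with the AAHSS (\ref{AAHSSPL8}), and your differentials $d_{2^i}$ mediated by $h_i$, the $h_0$-tower bookkeeping, and the use of $h_0h_1=0$ are precisely that computation carried out in more detail than the paper records. Two cosmetic slips: Lemma \ref{MOtoPLExt} is used to rule out differentials \emph{entering} (not leaving) the $n=0$ column, and the $d_4$ associated with $\mm{Sq}^4p_8\ne 0$ \emph{originates} in degree $12$ and lands on $h_2p_8$ in stem $11$, which is why that class is absent from the table.
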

\begin{center}
\begin{tabular}{|c|c|c|c|c|c|c|c|c|c|c|c|c|c|c|c|}
\hline
$t-s$ & 8&9& 9& 10&10& 11\\
\hline
s&$\ge$ 0&$\ge$0&0&0&1&2\\
\hline
cok$(\mathscr D\langle 8\rangle_\ast)$ & $h_0^sp_8$ & $h_0^sp_{9,1}$ & $p_{9,2}$ & $p_{10,3}$& $h_1p_{9,1}$, $h_1p_{9,2}$& $h^2_1p_{9,1}$, $h^2_1p_{9,2}$\\
\hline
\end{tabular}
\end{center}	

\begin{lemma}\label{2priPL8}
	In the $\mm{ASS}$
	\begin{equation}
		\mm{Ext}^{s,t}_{\mathscr{A}}(\mm{PL}\langle 8\rangle)=\mm{Ext}^{s,t}_{\mathscr{A}}(H^\ast(\mm{MPL}\langle 8\rangle), \mathbb{Z}_2)\Longrightarrow {_2\pi}_{t-s}(\mm{MPL}\langle 8\rangle) \label{ASSMPL8}
	\end{equation}
	$d_2(p_{9,1})=h^2_0p_8$, 
	$d_3(p_{9,2})=c_0$, $d_4(h_1p_{9,1})=h_1\omega$.
\end{lemma}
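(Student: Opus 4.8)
The plan is to establish the three differentials in order of increasing page, combining naturality of the Adams spectral sequence with the geometric input of Lemma \ref{exoticsphere}: in this range the torsion of $\pi_\ast(\mm{MO}\langle 8\rangle)$ is carried by exotic spheres, which are PL-standard and therefore die under $\mathscr D\langle 8\rangle_\ast$. Throughout I use that the $E_r$-pages are modules over the Adams $E_2$-page of the sphere, so that each $d_r$ is a derivation for which $h_0,h_1$ are permanent cycles.

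First I would prove $d_2(p_{9,1})=h_0^2 p_8$ by naturality with respect to the map $q:\mm{MPL}\langle 8\rangle\to\mm{MSPL}$ induced by the cover $\mm{BPL}\langle 8\rangle\to\mm{BSPL}$. Since the splittings of Lemma \ref{BPLandBO} and (\ref{PLtoBO}) are functorial in the connected covers, $q$ induces the identity on the $C^\ast(\mm{PL})$-tensor factors, hence on $E_2$-pages sends $p_{9,1}\mapsto p_{9,1}$ and $h_0^2 p_8\mapsto h_0^2 p_8$, both nonzero in $\mm{Ext}(\mm{MSPL})$. Because $d_2(p_{9,1})=h_0^2 p_8$ in the $\mm{MSPL}$ sequence (Lemma \ref{d2p9SPL}), naturality gives $q_\ast(d_2 p_{9,1})=h_0^2 p_8\ne 0$, so $d_2(p_{9,1})\ne 0$; as the target bidegree $(t-s,s)=(8,2)$ is one-dimensional (the image of $\mm{Ext}(\mm{MO}\langle 8\rangle)$ vanishes there by Fig. \ref{Fig.1}, leaving only the cokernel class $h_0^2 p_8$ of Lemma \ref{ExtMPL8}), the differential is exactly $h_0^2 p_8$.

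Next comes $d_3(p_{9,2})=c_0$. By Lemma \ref{exoticsphere}, $c_0=\iota_\ast(\phi)$ detects an exotic sphere $\Sigma^8$; since $\Sigma^8$ is PL-homeomorphic to $S^8$ it bounds a PL disc over which the $\mm{BPL}\langle 8\rangle$-structure extends, so $\mathscr D\langle 8\rangle_\ast(\phi)=0$ in $\pi_8(\mm{MPL}\langle 8\rangle)$. Thus $c_0$ cannot survive. It sits at $(8,3)$ and supports no differential, since it is already a permanent cycle in $\mm{MO}\langle 8\rangle$ and the cokernel of Lemma \ref{ExtMPL8} is concentrated in degrees $\ge 8$, so nothing lies in the target line $t-s=7$. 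Hence $c_0$ is hit; after the previous $d_2$ removes $p_{9,1}$, the only class in the source bidegree $(9,0)$ at $E_3$ is $p_{9,2}$, giving $d_3(p_{9,2})=c_0$. The Leibniz rule then yields $d_3(h_1 p_{9,2})=h_1 c_0$, and since the feeding bidegree $(10,2)$ is empty, $h_1 c_0$ survives to $E_3$; therefore $h_1 p_{9,2}$ dies at $E_3$ and is unavailable at $E_4$.

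Finally $d_4(h_1 p_{9,1})=h_1\omega$. By Lemma \ref{exoticsphere}, $h_1\omega=\iota_\ast(\kappa)$ detects the exotic sphere $\Sigma^9$, so $\mathscr D\langle 8\rangle_\ast(\kappa)=0$ and $h_1\omega$ must die; being a permanent cycle of $\mm{MO}\langle 8\rangle$ it supports no differential and so is hit. Its only possible sources are $h_1 p_{9,1}$ (by $d_4$ from $(10,1)$) and $p_{10,3}$ (by $d_5$ from $(10,0)$), as $h_1 p_{9,2}$ has already died. The $d_5$ alternative is excluded once one knows $p_{10,3}$ is a permanent cycle: $p_{10,3}$ is the class with $\mm{Sq}^1 p_{10,3}\ne 0$ (Proposition \ref{C11}) detecting the Kervaire manifold $M_0$ of nonzero Arf invariant, hence representing a nonzero class in $\Omega_{10}^{\mm{PL}\langle 8\rangle}$, so it cannot support a differential. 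Therefore $d_4(h_1 p_{9,1})=h_1\omega$. I expect this last step to be the main obstacle: the Leibniz elimination of $h_1 p_{9,2}$ is clean, but ruling out $d_5(p_{10,3})=h_1\omega$ requires the independent survival of $p_{10,3}$, which one must anchor on the Arf--Kervaire invariant rather than on naturality (both $h_1\omega$ and $p_{10,3}$ map compatibly into the $\mm{MSPL}$ sequence, so the comparison map alone cannot separate the two candidate sources).
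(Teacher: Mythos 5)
Your handling of the first two differentials is essentially the paper's own argument: $d_2(p_{9,1})=h_0^2p_8$ by naturality along $\mm{MPL}\langle 8\rangle\to\mm{MSPL}$ (Lemma \ref{d2p9SPL}), and $d_3(p_{9,2})=c_0$ because $c_0$ detects an exotic sphere (Lemma \ref{exoticsphere}) and so must die, while Lemma \ref{MOtoPLExt} and the degree constraints from Lemma \ref{ExtMPL8} leave $p_{9,2}$ as the only available source. Your extra bookkeeping (only $d_2$ or $d_3$ can reach $(8,3)$, $h_0p_{9,1}$ maps to $h_0^3p_8$, and $d_3(h_1p_{9,2})=h_1c_0$ removes $h_1p_{9,2}$ from the later pages) is correct and consistent with the paper.

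The gap is in your exclusion of $d_5(p_{10,3})=h_1\omega$. You anchor the survival of $p_{10,3}$ on the claim that it detects the Kervaire manifold $M_0$ and hence represents a nonzero class of $\Omega_{10}^{\mm{PL}\langle 8\rangle}$. This is both circular and insufficient. Circular, because the identification of $[M_0]$ with the generator of $\Omega_{10}^{\mm{PL}\langle 8\rangle}$ is only obtained in Lemma \ref{Phibor} as a consequence of $\Omega_{10}^{\mm{PL}\langle 8\rangle}=\mb{Z}_2$, i.e.\ of Proposition \ref{MPL8homotopy}, which rests on the very lemma you are proving; at this stage the only independent input is that $M_0$ is not nullbordant (the surgery lemma of Section 4). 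Insufficient, because even granting $[M_0]\ne 0$ this does not make $p_{10,3}$ a permanent cycle: the nonzero class could a priori be detected in higher filtration by $h_1p_{9,1}$, $h_1p_{9,2}$ or $h_1^2\omega$. To place $M_0$ in Adams filtration zero under $p_{10,3}$ you would need a nonzero normal characteristic number dual to $p_{10,3}$, i.e.\ the identification of $p_{10,3}$ with the Sullivan--Kervaire class $k_{10}$ together with a formula expressing $\Phi(M)$ through $\langle k_{10}(\nu_M),[M]\rangle$ for $4$-connected $M$ --- none of which you establish. The paper avoids this entirely by an algebraic step: $h_1p_{10,3}=0$ in $\mm{Ext}^{1,12}_{\mathscr{A}}(\mm{PL}\langle 8\rangle)$ (part of the AAHSS computation behind Lemma \ref{ExtMPL8}), so $d_5(p_{10,3})=h_1\omega$ would, by $h_1$-linearity, leave $h_1^2\omega$ --- which must also die by Lemma \ref{exoticsphere} --- with no remaining source. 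You should replace the Arf-invariant appeal by this multiplicative argument, or else supply an independent proof that $h_1p_{10,3}=0$.
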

\begin{proof}
	It is easy to see that $d_2(p_{9,1})=h^2_0p_8$ follows from the proof of Lemma \ref{d2p9SPL} and the map $\mm{MPL}\langle 8\rangle\to \mm{MSPL}$. 
	
Since that $c_0\in \pi_8(\mm{MO}\langle 8\rangle)$ denotes an exotic sphere (see Lemma \ref{exoticsphere}), and $c_0\in \mm{Ext}^{3,11}_{\mathscr{A}}(\mm{PL}\langle 8\rangle)$ (see Lemma \ref{MOtoPLExt}), we have that $c_0$ must be killed by a differential in the ASS (\ref{ASSMPL8}). So $d_3(p_{9,2})=c_0$. 

Similarly, $h_1^s\omega$ for $s=1,2$ must also be killed by certain differentials. By $h_1p_{10,3}=0\in \mm{Ext}^{1,12}_{\mathscr{A}}(\mm{PL}\langle 8\rangle)$, $d_5(p_{10,3})\ne h_1\omega$. Since $h^2_0h_1p_8=0\in \mm{Ext}^{3,12}_{\mathscr{A}}(\mm{PL}\langle 8\rangle)$, 
$d_2(h_1p_{9,1})=0$. So $d_4(h_1p_{9,1})=h_1\omega$.
\end{proof}


\begin{proposition}\label{MPL8homotopy}
	${\pi}_{8}(\mm{MPL}\langle 8\rangle)=\mb{Z}_4\oplus \mb{Z},$ 
	
	${\pi}_{9}(\mm{MPL}\langle 8\rangle)=0,$ 
	${\pi}_{10}(\mm{MPL}\langle 8\rangle)=\mb{Z}_2.$ 
\end{proposition}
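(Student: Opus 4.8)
The plan is to read all three groups off the Adams spectral sequence (\ref{ASSMPL8}) for ${}_2\pi_\ast(\mm{MPL}\langle 8\rangle)$, and then to account separately for the odd torsion. First I would assemble the $E_2$-page in the range $t-s\le 11$: by the injectivity of $\mathscr D\langle 8\rangle_\ast$ (Lemma \ref{MOtoPLExt}) the whole chart of Fig.\ref{Fig.1} embeds, while Lemma \ref{ExtMPL8} supplies the extra generators in its cokernel, namely the $h_0$-tower $h_0^sp_8$ in stem $8$, the tower $h_0^sp_{9,1}$ together with $p_{9,2}$ in stem $9$, the classes $p_{10,3}$ and $h_1p_{9,1},h_1p_{9,2}$ in stem $10$, and $h_1^2p_{9,1},h_1^2p_{9,2}$ in stem $11$. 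Thus stem $8$ carries $p_8,h_0p_8,\dots$ together with $c_0$ and the infinite $\omega$-tower; stem $9$ carries the $p_{9,1}$-tower, $p_{9,2}$, $h_1c_0$ and $h_1\omega$; and stem $10$ carries $p_{10,3}$, $h_1p_{9,1}$, $h_1p_{9,2}$ and $h_1^2\omega$.

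Next I would propagate the three differentials of Lemma \ref{2priPL8} together with their $h_1$-multiples. The $h_0$-linear differential $d_2(p_{9,1})=h_0^2p_8$ wipes out the entire $p_{9,1}$-tower and truncates the $p_8$-tower to the two classes $p_8,h_0p_8$. The differential $d_3(p_{9,2})=c_0$ removes $c_0$ and, via $d_3(h_1p_{9,2})=h_1c_0$, removes $h_1c_0$. The differential $d_4(h_1p_{9,1})=h_1\omega$ removes $h_1\omega$, and --- this is the crucial point --- its $h_1$-multiple $d_4(h_1^2p_{9,1})=h_1^2\omega$ removes the class $h_1^2\omega$ in stem $10$ that carries the surviving $\mb{Z}_2$ of $\pi_{10}(\mm{MO}\langle 8\rangle)$; here one uses $h_0h_1=0$ (so $d_2$ vanishes on $h_1^2p_{9,1}$) and $h_1^2c_0=0$ (so $d_3$ vanishes) to see that $h_1^2p_{9,1}$ survives to $E_4$. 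One then checks that $p_{10,3}$ supports no nonzero differential, since its only conceivable targets in stem $9$ are dead by $E_5$ and $h_1p_{10,3}=0$. The upshot is that $E_\infty$ consists of $\{p_8,h_0p_8\}$ and the $\omega$-tower in stem $8$, nothing in stem $9$, and the single class $p_{10,3}$ in stem $10$.

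I would then resolve the extensions. The infinite $h_0$-tower on $\omega$ is the torsion-free part and contributes $\mb{Z}$; the length-two $h_0$-tower $p_8,h_0p_8$ contributes $\mb{Z}_4$, and since all classes above filtration $1$ lie in the torsion-free tower there is no room for a hidden $h_0$-extension joining the two, so ${}_2\pi_8=\mb{Z}_4\oplus\mb{Z}$. Stem $9$ is empty and stem $10$ gives ${}_2\pi_{10}=\mb{Z}_2\langle p_{10,3}\rangle$.

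Finally I would dispose of the odd torsion. By the Thom isomorphism and Lemma \ref{BPL8homologyZ}, $H_n(\mm{MPL}\langle 8\rangle;\mb{Z})$ has no odd torsion for $n<11$, so running the Atiyah--Hirzebruch spectral sequence as in (\ref{MSPLAHSS}) at each odd prime shows that the only possibly surviving odd-primary class in total degree $\le 10$ is a $\mb{Z}_3$ in stem $10$, coming from the bottom cell and $\beta_1\in\pi_{10}(S^0)_{(3)}$. Because the unit $S^0\to\mm{MPL}\langle 8\rangle$ factors through $\mm{MO}\langle 8\rangle$, and $\pi_{10}(S^0)\cong\pi_{10}(\mm{MO}\langle 8\rangle)$ is carried by exotic spheres (Lemma \ref{exoticsphere}) which acquire the standard PL structure and hence bound in the PL category, this $\mb{Z}_3$ maps to zero in $\pi_{10}(\mm{MPL}\langle 8\rangle)$; the free rank is pinned down rationally (one in stem $8$, zero in stems $9,10$). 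Assembling the $2$-primary, odd-primary and rational information yields $\pi_8=\mb{Z}_4\oplus\mb{Z}$, $\pi_9=0$ and $\pi_{10}=\mb{Z}_2$. I expect the main obstacle to be precisely the bookkeeping in stem $10$: one must notice that the $\mb{Z}_2$ inherited from $\mm{MO}\langle 8\rangle$ (the class $h_1^2\omega$) is destroyed by the differential on the \emph{new} class $h_1^2p_{9,1}$, so that the surviving $\mb{Z}_2$ is the genuinely PL class $p_{10,3}$, while the competing $3$-torsion is simultaneously ruled out by the exotic-sphere argument.
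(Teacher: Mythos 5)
Your proposal is correct and follows essentially the same route as the paper: the $2$-primary part is read off the Adams spectral sequence using Lemmas \ref{MOtoPLExt}, \ref{ExtMPL8} and \ref{2priPL8} (with the $h_1$-multiples of those differentials disposing of $h_1c_0$, $h_1\omega$ and $h_1^2\omega$ exactly as you describe), and the odd torsion is excluded via Lemma \ref{BPL8homologyZ}, the Atiyah--Hirzebruch spectral sequence, and the exotic-sphere argument of Lemma \ref{exoticsphere} for the $\mb{Z}_3$ in stem $10$. You simply make explicit some bookkeeping (the survival of $p_{10,3}$ and the extension in stem $8$) that the paper leaves implicit.
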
	
\begin{proof}
	By Lemma \ref{MOtoPLExt}, \ref{ExtMPL8} and \ref{2priPL8}, we get the ${_2\pi}_n(\mm{MPL}\langle 8\rangle)$. By Lemma \ref{BPL8homologyZ} and the spectral sequence $$E_2^{p,q}=H_p(\mm{MPL}\langle 8\rangle;\pi_q(S^0))\Longrightarrow \pi_{p+q}(\mm{MPL}\langle 8\rangle)$$
	 we see that ${\pi}_{n}(\mm{MPL}\langle 8\rangle)$ does not have odd torsion for $n=8,9$.
	 \[
\xymatrix@C=.2cm{
  {E}_2^{p,q}(\mm{O}8)=H_p(\mm{MO}\langle 8\rangle ;\pi_q(S^0))\ar@{=>}[rr]^{}\ar[d]&  & \pi_{p+q}(\mm{MO}\langle 8\rangle)\ar[d]^-{} \\
{E}_2^{p,q}=H_p(\mm{MPL}\langle 8\rangle;\pi_q(S^0))\ar@{=>}[rr]^-{}& & \pi_{p+q} (\mm{MPL}\langle 8\rangle)
}
\]
Note that $\pi_{10}(\mm{MO}\langle 8\rangle)={E}_\infty^{0,10}(\mm{O}8)={E}_2^{0,10}(\mm{O}8)=\mb{Z}_6$. By Lemma \ref{exoticsphere}, $\mathscr D \langle 8\rangle_\ast({E}_\infty^{0,10}(\mm{O}8))=0$. Thus, $ {E}_\infty^{0,10}=0$. By Lemma \ref{BPL8homologyZ} and the spectral sequence, ${\pi}_{10}(\mm{MPL}\langle 8\rangle)$ does not have odd torsion. 
\end{proof}

It is very difficult to compute ${\pi}_{11}(\mm{MPL}\langle 8\rangle),{\pi}_{13}(\mm{MPL}\langle 8\rangle)$ by the algebraic method. Next, we consider them by surgery theory.

Recall the first stability theorem  
\begin{theorem}\cite{KirbSieb}
	Let $\mm{CAT}=\mm{O}$ or $\mm{PL}$. For $i\le m+1$, $m\ge 5$,
	$$\pi_i(\mm{TOP/CAT},\mm{TOP_m/CAT_m})=0.$$
\end{theorem}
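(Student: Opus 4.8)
The plan is to prove the statement by translating it, via smoothing theory, into a geometric fact about $\mm{CAT}$ structures on topological manifolds and then invoking the Kirby--Siebenmann Product Structure Theorem. The relative homotopy group $\pi_i(\mm{TOP/CAT},\mm{TOP}_m/\mm{CAT}_m)$ measures the failure of the stabilization map $s:\mm{TOP}_m/\mm{CAT}_m\to \mm{TOP/CAT}$ to induce isomorphisms, so by the long exact sequence of the pair it suffices to show that $s$ is $(m+1)$-connected, i.e.\ an isomorphism on $\pi_i$ for $i\le m$ and an epimorphism for $i=m+1$. First I would recall the immersion-theoretic description of the fibers: for a topological manifold $V^k$, smoothing theory identifies the space of concordance classes of $\mm{CAT}$ structures on $V$ with the space of lifts of the classifying map of the topological tangent microbundle $\tau_V:V\to \mm{BTOP}_k$ to $\mm{BCAT}_k$, that is, with sections of a bundle whose fiber is $\mm{TOP}_k/\mm{CAT}_k$; passing to the stable tangent bundle replaces the fiber by $\mm{TOP/CAT}$. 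Thus $\pi_i(\mm{TOP}_m/\mm{CAT}_m)$ is detected by $\mm{CAT}$ structures on the trivial rank-$m$ microbundle over $S^i$, and the relative group compares these unstable structures with their stabilizations.

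The key engine is the \emph{Product Structure Theorem} of \cite{KirbSieb}: if $M^k$ is a topological manifold with $k\ge 5$ (and $\partial M=\emptyset$, or $k\ge 6$ in general) and $\Sigma$ is a $\mm{CAT}$ structure on $M\times\mb{R}$, then there is a $\mm{CAT}$ structure $\sigma$ on $M$, unique up to concordance, together with a concordance from $\Sigma$ to $\sigma\times\mb{R}$. Iterating this identifies concordance classes of $\mm{CAT}$ structures on $M$ with those on $M\times\mb{R}^j$ for all $j$, which is exactly the assertion that the stabilization map on structure spaces is a bijection on path components (and, run in families, a weak equivalence) in the stable range. Applying this with $M$ built from $S^i$ and disks so that $\dim M$ lies in the range where the theorem applies converts the product-structure bijection into the statement that $s$ is $(m+1)$-connected; the dimension hypotheses of the Product Structure Theorem are precisely what force the constraints $m\ge 5$ and $i\le m+1$.

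The substantial work, and the step I expect to be the main obstacle, is the Product Structure Theorem itself, which I would prove by induction over a handle decomposition of $M$. The inductive step is local: one must straighten a given $\mm{CAT}$ structure on a single handle $\mb{R}^k\times\mb{R}$ rel a neighborhood of its attaching region, and this is handled by the \emph{torus trick}. One compactifies the local problem on $\mb{R}^n$ to a problem on the torus $T^n$, passes to a suitable finite cover to remove the fundamental-group obstruction, and then uses high-dimensional surgery together with the classification of homotopy tori to conclude that the structure is standard; this is where the bound $\ge 5$ enters, since surgery and the $s$-cobordism theorem require it, and where the genuinely hard external inputs sit, namely the computation $\mm{TOP/PL}\simeq \mm{K}(\mb{Z}_2,3)$ and the classification of $\mm{PL}$ (resp.\ smooth) structures on tori. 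For $\mm{CAT}=\mm{PL}$ this is the direct content of \cite{KirbSieb}; the case $\mm{CAT}=\mm{O}$ is then deduced by combining it with the classical stability of the smoothing map $\mm{PL}_m/\mm{O}_m\to\mm{PL/O}$ and the fibration relating $\mm{TOP/O}$, $\mm{TOP/PL}$, and $\mm{PL/O}$. Finally, I would assemble the handle induction using the relative, parametrized form of the local straightening (\emph{concordance implies isotopy}) to guarantee that the structures glued over adjacent handles agree up to concordance, completing the proof.
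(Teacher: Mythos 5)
The paper does not prove this statement: it is quoted directly from Kirby--Siebenmann \cite{KirbSieb} (the stability theorem of Essay V) and used as a black box to deduce Corollary \ref{stable}, so there is no in-paper argument to compare yours against. What you have written is a faithful top-down outline of the proof in the cited source itself: translate the relative vanishing into $(m+1)$-connectivity of the stabilization $\mm{TOP}_m/\mm{CAT}_m\to\mm{TOP/CAT}$, identify $\mm{CAT}$ structures with lifts of the topological tangent microbundle so that this connectivity becomes a statement about destabilizing structures on products with $\mb{R}^j$, reduce that to the Product Structure Theorem, and prove the latter by handle induction with the torus trick, the surgery classification of homotopy tori, and concordance-implies-isotopy supplying the local straightening. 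The dimension bookkeeping ($m\ge 5$ from surgery and the $s$-cobordism theorem, $i\le m+1$ from the connectivity the Product Structure Theorem delivers) and the reading of the long exact sequence of the pair are both correct, and the reduction of $\mm{CAT}=\mm{O}$ to the $\mm{PL}$ case via classical Hirsch--Mazur stability of $\mm{PL}_m/\mm{O}_m\to\mm{PL/O}$ and the comparison fibrations is legitimate.

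Two caveats. First, you list $\mm{TOP/PL}\simeq\mm{K}(\mb{Z}_2,3)$ among the inputs to the torus-trick step; in \cite{KirbSieb} that computation is an \emph{output} of the same machinery (it additionally requires Rochlin's theorem), and neither handle straightening nor the stability theorem depends on it --- what the finite-cover step actually uses is that exotic $\mm{PL}$ homotopy tori become standard after passing to a suitable $2^n$-fold cover. As written this introduces a spurious circularity. Second, every load-bearing step (handle straightening, the classification of homotopy tori, concordance implies isotopy) is itself a major theorem that you only name, so the proposal is a guide to the literature rather than a self-contained proof; given that the paper's own treatment is a bare citation, that level of detail is defensible, but it should be labeled as such.
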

As $\pi_{i}(\mm{O}_m)\cong \pi_i(\mm{O})$ for $i\le m-2$, we have
\begin{corollary}\label{stable}
	$\pi_{i}(\mm{TOP_m})\cong\pi_{i}(\mm{TOP})$, $\pi_{i}(\mm{PL_m})\cong\pi_{i}(\mm{PL})$ for $4\le i\le m-2$.
\end{corollary}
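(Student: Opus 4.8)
The plan is to deduce the corollary from the first stability theorem by two successive applications of the five lemma to the long exact homotopy sequences of the fibration sequences
\[
\mm{CAT}_m \to \mm{TOP}_m \to \mm{TOP}_m/\mm{CAT}_m, \qquad \mm{CAT}\to \mm{TOP}\to \mm{TOP/CAT},
\]
first for $\mm{CAT}=\mm{O}$ and then for $\mm{CAT}=\mm{PL}$, with the stabilization maps as the vertical arrows of a commuting ladder. I would begin by reformulating the stability theorem: via the long exact sequence of the pair $(\mm{TOP/CAT},\,\mm{TOP}_m/\mm{CAT}_m)$, the vanishing $\pi_i(\mm{TOP/CAT},\mm{TOP}_m/\mm{CAT}_m)=0$ for $i\le m+1$ is equivalent to the stabilization $\pi_i(\mm{TOP}_m/\mm{CAT}_m)\to \pi_i(\mm{TOP/CAT})$ being an isomorphism for $i\le m$ (and an epimorphism for $i=m+1$), for both $\mm{CAT}=\mm{O}$ and $\mm{CAT}=\mm{PL}$.

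For the first step I take $\mm{CAT}=\mm{O}$. In the ladder induced by stabilization, the fiber map $\pi_i(\mm{O}_m)\to \pi_i(\mm{O})$ is an isomorphism for $i\le m-2$ by hypothesis (and an epimorphism for $i=m-1$, which one reads off the fibration $\mm{O}(m)\to \mm{O}(m+1)\to S^m$), while the base map $\pi_i(\mm{TOP}_m/\mm{O}_m)\to \pi_i(\mm{TOP/O})$ is an isomorphism for $i\le m$. The five lemma then forces the total-space map $\pi_i(\mm{TOP}_m)\to \pi_i(\mm{TOP})$ to be an isomorphism for $i\le m-2$. A parallel four-lemma (epimorphism) argument at the edge, using that $\pi_{m-1}(\mm{O}_m)\to \pi_{m-1}(\mm{O})$ is onto and $\pi_{m-2}(\mm{O}_m)\to \pi_{m-2}(\mm{O})$ is injective, upgrades this to surjectivity at $i=m-1$. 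This establishes the $\mm{TOP}$ assertion together with the edge surjectivity that the next step needs.

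I would then feed this back into the ladder for $\mm{CAT}=\mm{PL}$. Now the total-space map $\pi_i(\mm{TOP}_m)\to \pi_i(\mm{TOP})$ is known (iso for $i\le m-2$, epi for $i=m-1$), and the base map $\pi_i(\mm{TOP}_m/\mm{PL}_m)\to \pi_i(\mm{TOP/PL})$ is an isomorphism for $i\le m$ by the stability theorem. Applying the five lemma to the segment
\[
\pi_{i+1}(\mm{TOP}_m)\to \pi_{i+1}(\mm{TOP}_m/\mm{PL}_m)\to \pi_i(\mm{PL}_m)\to \pi_i(\mm{TOP}_m)\to \pi_i(\mm{TOP}_m/\mm{PL}_m)
\]
and its stable counterpart, the four outer vertical maps are respectively epi, iso, iso, mono throughout the range $i\le m-2$, so the middle map $\pi_i(\mm{PL}_m)\to \pi_i(\mm{PL})$ is an isomorphism for $i\le m-2$. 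This is the $\mm{PL}$ assertion; the lower restriction $i\ge 4$ is harmless and is simply the range in which the comparison will be used in the sequel.

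The main obstacle is the boundary bookkeeping, not any deep geometric input. A naive application of the five lemma to the $\mm{PL}$ ladder only gives an isomorphism for $i\le m-3$, since it demands that the $\mm{TOP}$-stabilization be onto one degree higher, at $i+1=m-1$. Hence the genuine crux is the edge surjectivity $\pi_{m-1}(\mm{TOP}_m)\twoheadrightarrow \pi_{m-1}(\mm{TOP})$ from Step~1, which must be extracted separately by the four-lemma before Step~2 can reach the optimal range $i\le m-2$. One must also verify that the vertical stabilizations are honest natural maps of fibration sequences, so that the ladders commute; this is guaranteed for $m\ge 5$ by the microbundle and smoothing theory underlying the stability theorem.
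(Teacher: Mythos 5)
Your argument is correct, and for the $\mm{TOP}$ half it coincides with the paper's: both run the five lemma on the ladder comparing the homotopy sequences of $\mm{O}_m\to\mm{TOP}_m\to\mm{TOP}_m/\mm{O}_m$ and its stable version, using the classical stability $\pi_i(\mm{O}_m)\cong\pi_i(\mm{O})$ for $i\le m-2$ together with the Kirby--Siebenmann vanishing $\pi_i(\mm{TOP/O},\mm{TOP}_m/\mm{O}_m)=0$. Where you diverge is the $\mm{PL}$ half. The paper does not run a second five-lemma ladder at all: it invokes the Kirby--Siebenmann identification $\mm{TOP}_m/\mm{PL}_m\simeq\mm{K}(\mb{Z}_2,3)$ for $m\ge 5$, so that $\pi_i(\mm{TOP}_m/\mm{PL}_m)=0=\pi_i(\mm{TOP/PL})$ for $i\ge 4$, whence $\pi_i(\mm{PL}_m)\cong\pi_i(\mm{TOP}_m)$ and $\pi_i(\mm{PL})\cong\pi_i(\mm{TOP})$ in that range and the $\mm{PL}$ statement drops out of the $\mm{TOP}$ one. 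This is shorter, and it is precisely what accounts for the lower bound $i\ge 4$ in the statement --- a bound you correctly observe your argument does not need, though you read it as merely conventional rather than as an artifact of the paper's method. Your route buys a marginally stronger conclusion (the $\mm{PL}$ isomorphism in the whole range $i\le m-2$) at the cost of the extra edge bookkeeping you identify: the four-lemma surjectivity $\pi_{m-1}(\mm{TOP}_m)\to\pi_{m-1}(\mm{TOP})$, which is genuinely needed to push the second five lemma from $i\le m-3$ up to $i\le m-2$, and which the paper's approach sidesteps entirely by never comparing $\mm{PL}_m$ to $\mm{TOP}_m$ through a nontrivial quotient.
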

\begin{proof}
	Consider the following exact sequences
\[
\xymatrix@C=0.7cm{
\cdots \ar[r]^-{}\ar[d]^-{}&\pi_{i}(\mm{CAT}_m)\ar[r]\ar[d] & \pi_{i}(\mm{TOP_m})\ar[r] \ar[d]^-{}&\pi_{i}(\mm{TOP_m/CAT_m})\ar[d]\ar[r]&\cdots\\
  \cdots \ar[r]^-{}&  \pi_{i}(\mm{CAT})\ar[r]&  \pi_{i}(\mm{TOP})\ar[r]&  \pi_{i}(\mm{TOP/CAT})\ar[r]&\cdots
}
\]	
If $\mm{CAT}=\mm{O}$, $\pi_{i}(\mm{TOP_m})=\pi_{i}(\mm{TOP})$ for $4\le i\le m-2$ by the Five-Lemma. For $m\ge 5,$ $\mm{TOP_m/PL_m}=\mm{K}(\mb{Z}_2,3)$ \cite{KirbSieb}. Hence $\pi_{i}(\mm{PL_m})=\pi_{i}(\mm{PL})$ for $4\le i\le m-2$. 
\end{proof}

\begin{proposition}\label{13MPL8}
	${\pi}_{11}(\mm{MPL}\langle 8\rangle)={\pi}_{13}(\mm{MPL}\langle 8\rangle)=0.$
\end{proposition}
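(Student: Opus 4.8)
The plan is to abandon the Adams spectral sequence (which becomes unwieldy once the higher $C^\ast(\mm{PL})$ classes enter) and argue geometrically by surgery. A class in $\pi_n(\mm{MPL}\langle 8\rangle)=\Omega_n^{\mm{PL}\langle 8\rangle}$ for $n=11,13$ is represented by a closed PL manifold $M^n$ together with a lift of the classifying map of its stable PL normal bundle to $\mm{BPL}\langle 8\rangle$. First I would surger $M$, within its $\langle 8\rangle$-bordism class, to make a reference map $M\to \mm{BPL}\langle 8\rangle$ highly connected. One kills a class in $\pi_i(M)$ by PL surgery whenever $2i<n$ (below the middle dimension) and the class dies in the structure space; since $\mm{BPL}\langle 8\rangle$ is $7$-connected, \emph{every} class in $\pi_i(M)$ with $i\le 7$ maps trivially there and is killable. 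It is precisely Corollary \ref{stable} that licenses this in the PL category: because $\pi_i(\mm{PL}_m)\cong\pi_i(\mm{PL})$ in the stable range, the spheres we embed carry the PL normal-bundle data needed to realize the surgeries and to preserve the normal $\langle 8\rangle$-structure.

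The arithmetic of these two dimensions is what makes the argument close. For $n=11$ we may kill $\pi_i$ for $i\le 5$, and for $n=13$ for $i\le 6$; both thresholds are $\le 7$, so the obstruction coming from $\mm{BPL}\langle 8\rangle$ never intervenes. We thus reach a $5$-connected $11$-manifold, resp. a $6$-connected $13$-manifold. Poincar\'e duality then forces $H_j(M;\mb Z)=0$ for all $0<j<n$, so $M$ is an integral homology sphere and, being simply connected, a homotopy sphere by Hurewicz and Whitehead. In particular the surgeries stop \emph{before} the middle dimension, so no signature or Kervaire--Arf obstruction is ever encountered --- which is exactly why $n=11,13$ are tractable while neighbouring dimensions are not.

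It remains to show that a PL homotopy $n$-sphere with $\langle 8\rangle$-structure is null-bordant. By the PL Poincar\'e theorem ($n\ge 5$) it is PL-homeomorphic to $S^n=\partial D^{n+1}$, and $D^{n+1}$ is a PL manifold whose (stably trivial) normal bundle carries a canonical string structure. The given structure on $S^n$ bounds iff it extends over $D^{n+1}$; writing $\mc F$ for the homotopy fibre of the cover map $\mm{BPL}\langle 8\rangle\to\mm{BPL}$, the sole obstruction to extending the boundary lift lies in $H^{n+1}(D^{n+1},S^n;\pi_n(\mc F))\cong\pi_n(\mc F)$. Since $\mm{BPL}\langle 8\rangle\to\mm{BPL}$ induces an isomorphism on $\pi_i$ for $i\ge 8$, one has $\pi_i(\mc F)=0$ for $i\ge 8$, hence $\pi_{11}(\mc F)=\pi_{13}(\mc F)=0$. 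The obstruction vanishes, the homotopy sphere bounds a string PL disc, and therefore $\pi_{11}(\mm{MPL}\langle 8\rangle)=\pi_{13}(\mm{MPL}\langle 8\rangle)=0$.

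The hard part will be making the PL surgery step fully rigorous: guaranteeing that the generators below the middle dimension are represented by PL-embedded spheres with trivial (PL) normal bundles and that each surgery trace is a $\langle 8\rangle$-bordism, so that connectivity is gained without leaving the bordism class. This is where Corollary \ref{stable} together with PL general position and transversality do the real work. A secondary point needing care is the identification of the $\langle 8\rangle$-structures on $S^n$ as a torsor over $\pi_n(\mc F)$ and the obstruction computation above; once $\pi_n(\mc F)=0$ is established this is formal, but it is the step in which the bound $n\ge 8$ is actually used.
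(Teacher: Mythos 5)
Your conclusion is correct and your route is essentially the paper's: surger the class down to a PL homotopy sphere, identify it with $S^n$, and observe that the unique $\mm{BPL}\langle 8\rangle$-structure on $S^n$ bounds. The paper's proof (with $n=5,6$ and $2n+1=11,13$) first concentrates $H_\ast(M;\mb{Z})$ in degrees $0,n,n+1,2n+1$, then kills $H_n$ by surgery on PL-embedded copies of $S^n\times D^{n+1}$, using Corollary \ref{stable} to supply the PL normal-bundle data and $\pi_n(F)\cong\pi_{n+1}(\mm{BO})=0$ to guarantee the embedded spheres carry the unique (hence bounding) $\langle 8\rangle$-structure --- exactly the two points you flag as ``the hard part.'' Your handling of the final step, locating the sole obstruction to extending the structure over $D^{n+1}$ in $H^{n+1}(D^{n+1},S^n;\pi_n(\mc F))\cong\pi_n(\mc F)=0$, is more explicit than the paper's, which simply asserts that bordance to $S^{2n+1}$ finishes the proof.

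One piece of your justification is off, even though the step it supports is true. A $5$-connected $11$-manifold (resp.\ $6$-connected $13$-manifold) is \emph{not} reached by surgery below the middle dimension alone: the easy range only makes the normal map $[n/2]$-connected, i.e.\ makes $M^{11}$ $4$-connected and $M^{13}$ $5$-connected. The final layer of surgeries, on $\pi_5(M^{11})$ resp.\ $\pi_6(M^{13})$, is precisely the middle-dimensional surgery of an odd-dimensional manifold: a single such surgery need not reduce $H_n$ (it can create new free or torsion classes in degrees $n$ and $n+1$), and the fact that one can nevertheless kill $H_n$ entirely is the odd-dimensional surgery theorem (the vanishing of $L_{2n+1}(e)$, via the Kervaire--Milnor/Wall bookkeeping), not a consequence of ``stopping before the middle dimension.'' So your parenthetical explanation that no Kervaire--Arf or signature obstruction arises because the surgeries stop short of the middle is the wrong reason; the correct reason is that the ambient dimension is odd. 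The paper is nearly as terse at this point, but it does isolate the killing of $H_n$ as a separate step after the easy range. With that repair, your argument coincides with the paper's.
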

\begin{proof}
Let $n=5$ or $6$. For any bordism class $$\{M\}\in {\Omega}_{2n+1}^{\mm{PL}\langle 8\rangle}\cong {\pi}_{2n+1}(\mm{MPL}\langle 8\rangle),$$ following the surgery theory, we can assume that $H_{i}(M;\mb{Z})$ is nontrivial only for
	 $i=0,n,n+1,2n+1$. Let $f:M\to \mm{BPL}\langle 8\rangle$ be a $\mm{BPL}\langle 8\rangle$-structure of $M$.
	 Since $\pi_n(M)\cong H_n(M;\mb{Z})$, any generator $x\in H_n(M;\mb{Z})$ can be represented by a PL embedding $g:S^n\to M$ \cite{RourkeSanderson}. Note that $g\circ f\simeq 0$.
By Corollary \ref{stable}, $\pi_n(\mm{BPL}_{n+1})=0$. The embedding $g$ can be represented by the PL embedding $\bar g:S^n\times D^{n+1}\to M$. Let $F$ be the homotopy fibre of the map $\mm{BPL}\langle 8\rangle\to \mm{BPL}$. Since $\pi_n(F)\cong \pi_{n+1}(\mm{BPL})\cong \pi_{n+1}(\mm{BO})=0$, the $\mm{BPL}\langle 8\rangle$-structure of $S^n$ is unique. Hence we can kill the elements in $H_n(M;\mb{Z})$ by surgery. In other words, $M$ is $\mm{BPL}\langle 8\rangle$-bordant to $S^{2n+1}$. So  
${\Omega}_{2n+1}^{\mm{PL}\langle 8\rangle}=0$.
\end{proof}

\section{Kervaire invariant}
 
By the basic results of \cite{KirbSieb}, we have a PL version of the Theorem 1 of \cite{Freed} or the Corollary 4 in Section 7 of \cite{Kreck1999}.
\begin{theorem}\label{PLhome}
	Let $\xi$ be a $\mm{PL}$ linear bundle over a simply connected $\mm{CW}$ complex $X$. Let $M^{2n}_i\stackrel{f_i}{\to }X$, $i=0,1$, $n\ge 5$, be normal maps from closed $\mm{PL}$ manifolds, i.e. $f^\ast_i \xi=\nu(M_i)$, the stable normal bundle over $M_i$. Suppose that $n$ is odd, $f_0$ and $f_1$ are normally bordant, $f_i$ is $n$-connected, and that $B_n(M_0) = B_n(M_1)$ where $B_n$ denotes the $n$-th Betti number. Then $M_0$ and $M_1$ are $\mm{PL}$-homeomorphic.    
\end{theorem}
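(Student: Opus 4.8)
The plan is to convert a normal bordism between $f_0$ and $f_1$ into a PL $h$-cobordism and then apply the PL $h$-cobordism theorem. Since $f_0$ and $f_1$ are normally bordant, I would begin with a normal map $F\colon W^{2n+1}\to X$ on a compact PL manifold with $\partial W=M_0\sqcup(-M_1)$, $F|_{M_i}=f_i$ and $F^\ast\xi=\nu(W)$. Throughout, the boundary is never altered; only the interior of $W$ is surgered, the object being to make both inclusions $M_i\hookrightarrow W$ homotopy equivalences. All the necessary PL machinery — general position, the Whitney trick and handle theory — is available because $\dim W=2n+1\ge 11$, by \cite{KirbSieb} and \cite{RourkeSanderson}; moreover $\pi_1(X)=1$ gives $\mm{Wh}(1)=0$, so any $h$-cobordism we produce is automatically an $s$-cobordism.

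First I would perform surgery below the middle dimension on the interior of $W$. Because $X$ is simply connected and each $f_i$ is $n$-connected, the relative homotopy $\pi_i(W,M_0)$ can be killed for $i\le n$ by surgering embedded spheres of dimension less than $n$; the normal $\xi$-structure supplies the framings, and Corollary \ref{stable} provides the embeddings with the correct PL normal bundles. After this step $H_\ast(W,M_0)$ vanishes outside the middle degree $n+1$, and the surgery kernel $K:=H_{n+1}(W,M_0)$ is a finitely generated free abelian group by the simple connectivity.

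The heart of the argument is the middle-dimensional surgery that kills $K$, and this is where I expect the real difficulty. Here I would exploit that $n$ is odd: the intersection form on the middle homology of the closed $2n$-manifolds $M_i$ is skew-symmetric and unimodular, hence symplectic, and is therefore determined up to isometry by its rank $B_n(M_i)$. The obstruction to turning $W$ into an $h$-cobordism rel boundary lies in the odd-dimensional surgery obstruction group $l_{2n+1}(\mb{Z}[\pi_1 X])=l_{2n+1}(\mb{Z})$ of Kreck's modified surgery \cite{Kreck1999}. The hypothesis that $f_0$ and $f_1$ are normally bordant makes the quadratic refinements on $K$ compatible, while the hypothesis $B_n(M_0)=B_n(M_1)$ upgrades a merely stable isometry of the two symplectic forms to an honest one; together they allow the isometry to be realized by a finite sequence of PL surgeries and handle cancellations, annihilating $K$. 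Realizing this algebraic isometry geometrically in the PL category, while bookkeeping the normal data, is the main obstacle, and it is precisely the step that consumes both hypotheses.

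Once $K=0$ the inclusion $M_0\hookrightarrow W$ is a homotopy equivalence, and by Poincar\'e--Lefschetz duality for the triple $(W;M_0,M_1)$ so is $M_1\hookrightarrow W$; hence $W$ is a PL $h$-cobordism. As $\dim\partial W=2n\ge 10$ and $\pi_1=1$, the PL $h$-cobordism theorem yields a PL homeomorphism $W\cong M_0\times[0,1]$, and restriction to the two ends gives the desired PL homeomorphism $M_0\cong M_1$.
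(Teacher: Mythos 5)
The paper does not prove this theorem from scratch: its entire ``proof'' is the observation that the statement is Theorem 1 of \cite{Freed} (equivalently Corollary 4 in Section 7 of \cite{Kreck1999}) in the smooth category, and that the ingredients of those proofs --- general position, the Whitney trick in high dimensions, handle theory and the $s$-cobordism theorem --- are available for PL manifolds by \cite{KirbSieb} and \cite{RourkeSanderson}. Your outline is therefore not a different route but a reconstruction of the argument the paper outsources to those references: take the normal bordism $W$, surger its interior below the middle dimension, kill the middle-dimensional kernel using that $n$ is odd and $B_n(M_0)=B_n(M_1)$, and finish with the simply connected PL $h$-cobordism theorem. That is exactly the architecture of Freedman's and Kreck's proofs.

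The difficulty is that your write-up stops at precisely the point where all the content lies. You yourself label the middle-dimensional step ``the main obstacle'' and then assert, rather than prove, that the two hypotheses ``allow the isometry to be realized by a finite sequence of PL surgeries and handle cancellations.'' The mere existence of a normal bordism does not make the odd-dimensional obstruction elementary; in Kreck's framework it yields only stable PL homeomorphism (after connected sums with copies of $S^n\times S^n$), and the whole point of Freedman's Theorem 1 and Kreck's Corollary 4 is the cancellation argument showing that, for $n$ odd, equal middle Betti numbers (equivalently equal Euler characteristics) let one destabilize. That argument rests on the fact that a unimodular $(-1)$-symmetric form over $\mb{Z}$ is hyperbolic together with a genuine handle-trading analysis on $W$, none of which appears in your sketch. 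Two smaller inaccuracies: $H_{n+1}(W,M_0)$ need not be free before stabilizing (only finitely generated), and the object to be manipulated is not the intersection form of the closed manifolds $M_i$ but the surgery kernel of $W$ rel boundary, which is a half-rank summand of a hyperbolic form (a formation), not a form determined by its rank. As it stands the proposal is a correct plan with the decisive lemma missing; either carry out the handle-trading step or do what the paper does and cite \cite{Freed} or \cite{Kreck1999} for it, verifying only that the PL versions of the required tools exist.
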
 

Here, we take $X=\mm{BPL}\langle 8\rangle$ associated with the universal PL bundle $\xi=\gamma_8^{pl}$.
Let $M^{10}$ be a closed 4-connected manifold. So $H^5(M;\mb{Z})$ is free abelian of even rank $2s$. By the fibration 
$$\mm{BPL}\to \mm{BTOP}\to \mm{K}(\mb{Z}_2,4)$$
$M$ admits a unique PL-manifold structure.  
Moreover, by obstruction theory, there exists a normal map $f:M\to \mm{BPL}\langle 8\rangle$, unique up to homotopy. 
Then, by Theorem \ref{PLhome}, we have 

\begin{corollary}\label{bor-home}
	Let $M^{10}_i$, $i=0,1$ be closed $4$-connected manifolds with the same $B_5(M_i)=2s$. They are $\mm{PL}$-homeomorphic if $M_0$ is normally $\mm{BPL}\langle 8\rangle$-bordant to $M_1$. 
\end{corollary}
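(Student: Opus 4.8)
The plan is to deduce the corollary directly from Theorem \ref{PLhome} by specializing the target to $X=\mm{BPL}\langle 8\rangle$ and the bundle to $\xi=\gamma_8^{pl}$, and then verifying each hypothesis of that theorem for the pair $M_0,M_1$. I would set $n=5$, so that $2n=10$ matches the dimension, $n$ is odd, and $n\ge 5$, exactly as Theorem \ref{PLhome} demands. Since $X=\mm{BPL}\langle 8\rangle$ is $7$-connected it is in particular simply connected, so the standing requirement on $X$ holds.

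First I would record that each $M_i$ is a closed PL manifold carrying a normal map $f_i:M_i\to \mm{BPL}\langle 8\rangle$ with $f_i^\ast\gamma_8^{pl}=\nu(M_i)$, unique up to homotopy; this is precisely the content of the obstruction-theoretic discussion preceding the corollary, namely that a $4$-connected closed $10$-manifold admits a unique PL structure and that the classifying map of its PL normal bundle lifts uniquely to $\mm{BPL}\langle 8\rangle$. Thus the normal-map hypothesis $f_i^\ast\xi=\nu(M_i)$ of Theorem \ref{PLhome} is met.

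The key point to check is that each $f_i$ is $n$-connected, i.e. $5$-connected, and I expect this to be forced by the connectivities of source and target rather than to require any surgery. Since $M_i$ is $4$-connected we have $\pi_k(M_i)=0$ for $k\le 4$, while $\mm{BPL}\langle 8\rangle$ being $7$-connected gives $\pi_k(\mm{BPL}\langle 8\rangle)=0$ for $k\le 7$. Hence $f_{i\ast}:\pi_k(M_i)\to \pi_k(\mm{BPL}\langle 8\rangle)$ is an isomorphism for $k<5$ (both groups vanish) and is surjective for $k=5$ (the target vanishes), so $f_i$ is $5$-connected. The remaining hypotheses are supplied by assumption: that $M_0$ is normally $\mm{BPL}\langle 8\rangle$-bordant to $M_1$ says exactly that $f_0$ and $f_1$ are normally bordant, and $B_5(M_0)=B_5(M_1)=2s$ furnishes the equality $B_n(M_0)=B_n(M_1)$ of the $n$-th Betti numbers.

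With all hypotheses of Theorem \ref{PLhome} verified, I would conclude that $M_0$ and $M_1$ are PL-homeomorphic. I do not anticipate a genuine obstacle: the only nonformal input, the existence and uniqueness of the lifts $f_i$, is already established in the text just before the statement, and the connectivity check is dictated by the hypotheses. The one place warranting a line of care is confirming that the bordism notion in the statement, ``normally $\mm{BPL}\langle 8\rangle$-bordant'', coincides with the ``normally bordant'' hypothesis of Theorem \ref{PLhome}; this amounts to unwinding the definition of a $\mm{BPL}\langle 8\rangle$-structure and the normal map it induces, after which the identification is immediate.
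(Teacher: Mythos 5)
Your proposal is correct and follows exactly the paper's route: the paper likewise deduces the corollary by applying Theorem \ref{PLhome} with $X=\mm{BPL}\langle 8\rangle$, $\xi=\gamma_8^{pl}$, $n=5$, using the obstruction-theoretic existence and uniqueness of the normal map $f_i$ established just before the statement. Your explicit verification that $f_i$ is $5$-connected from the $4$-connectivity of $M_i$ and the $7$-connectivity of $\mm{BPL}\langle 8\rangle$ is a detail the paper leaves implicit, but it is the intended argument.
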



In \cite{Kerva1960}, Kervaire presented an invariant for such closed 4-connected manifolds. Denote it by $\Phi(M)\in \mb{Z}_2$.




\begin{lemma}
	If a $4$-connected closed manifold $M$ of dimension $10$ is $\mm{BPL}\langle 8\rangle$-bordant to $0$, then $\Phi(M)=0$.
\end{lemma}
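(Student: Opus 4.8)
The plan is to identify $\Phi(M)$ with the Arf invariant of the Kervaire quadratic form and then show that a null-bordism supplies a Lagrangian subspace on which this form vanishes. Write $V=H_5(M;\mb{Z}_2)$, equipped with the $\mb{Z}_2$-intersection pairing $\langle-,-\rangle$. Since $M$ has dimension $10$, the integral intersection form on $H_5(M;\mb{Z})$ is skew-symmetric, so its mod $2$ reduction $\langle-,-\rangle$ is a symplectic (alternating) form on $V$. Kervaire's invariant is $\Phi(M)=\mm{Arf}(q)$, where $q:V\to\mb{Z}_2$ is the quadratic refinement $q(x+y)=q(x)+q(y)+\langle x,y\rangle$ coming from the normal structure: representing $x$ by an embedded $S^5\hookrightarrow M$ (possible since $M$ is $4$-connected and we sit in the stable range), the value $q(x)$ records the framing obstruction of the normal bundle relative to the $\mm{BPL}\langle 8\rangle$-structure.

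Suppose now $M=\partial W$ as $\mm{BPL}\langle 8\rangle$-manifolds, with $W$ a compact $11$-manifold whose $\mm{BPL}\langle 8\rangle$-structure restricts to that of $M$; by surgery below the middle dimension I may assume $W$ is as highly connected as the bordism class allows. First I would set $K=\ker\bigl(j_\ast:H_5(M;\mb{Z}_2)\to H_5(W;\mb{Z}_2)\bigr)$ and invoke the ``half-lives-half-dies'' consequence of Poincar\'e--Lefschetz duality. From the exact sequence of the pair $(W,M)$ together with the duality $H_6(W,M;\mb{Z}_2)\cong H^5(W;\mb{Z}_2)$, one obtains $K=\mm{im}\bigl(H_6(W,M)\to H_5(M)\bigr)$ with $\dim_{\mb{Z}_2}K=\tfrac12\dim_{\mb{Z}_2}V$, and $K$ is isotropic for $\langle-,-\rangle$. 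Hence $K$ is a Lagrangian subspace of $(V,\langle-,-\rangle)$.

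The heart of the argument is to verify that $q$ vanishes on $K$. For $x\in K$ the representing sphere $S^5\hookrightarrow M$ is null-homologous in $W$, so it bounds an embedded membrane in $W$; since $\mm{PL/O}$ is $6$-connected no low-dimensional exotic phenomena obstruct this, and the normal $\mm{BPL}\langle 8\rangle$-structure extends over the membrane. Extending the framing over the membrane trivializes exactly the obstruction class defining $q(x)$, so $q(x)=0$. With $q$ identically zero on the Lagrangian $K$, the standard formula for the Arf invariant of a quadratic refinement of a symplectic $\mb{Z}_2$-form gives $\mm{Arf}(q)=0$, and therefore $\Phi(M)=0$.

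The main obstacle is the vanishing $q|_K\equiv 0$ in the third step: it requires unwinding Kervaire's definition of $q$ as a normal-framing obstruction and checking that the $\mm{BPL}\langle 8\rangle$-structure on the bounding manifold $W$ genuinely extends the data used to define $q(x)$, so that the obstruction dies on membranes. The duality input of the second step and the Arf-invariant conclusion of the last step are then formal once this geometric extension is established.
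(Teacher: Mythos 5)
Your overall strategy coincides with the paper's. The paper's proof is two lines: perform surgery on the null-bordism $V^{11}$ to make it $4$-connected, then invoke Lemma 2.2 of Kervaire's paper; that cited lemma is precisely the argument you spell out, namely that the kernel $K$ of $H_5(M;\mb{Z}_2)\to H_5(V;\mb{Z}_2)$ is a Lagrangian for the intersection form on which the quadratic refinement vanishes, so that $\Phi(M)=\mm{Arf}(q)=0$. So in outline you have reconstructed the intended proof, and your surgery step and half-lives-half-dies step match what the paper does and are fine.

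The step that does not go through as written is the one you yourself flag as the crux, $q|_K\equiv 0$, and the gap is concrete. First, Kervaire defines $\Phi$ via a functional cohomology operation $\psi$ rather than via the normal-bundle obstruction of an embedded $S^5$; the two agree here, but that identification is an input you are assuming, and it is exactly what lets Kervaire avoid your membrane analysis: with the cohomological definition, the vanishing of $\psi$ on the image of $H^5(V)\to H^5(M)$ is immediate from naturality, and the mod $2$ reduction of that image is the Lagrangian. Second, and more seriously for your geometric version: for $x\in K$ the statement ``the representing sphere is null-homologous in $V$, so it bounds an embedded membrane'' is not justified. Membership in $K$ only says the class dies in $\mb{Z}_2$-homology; since $V$ is $4$-connected this means the integral image $j_\ast\tilde x$ is divisible by $2$ in $H_5(V;\mb{Z})\cong\pi_5(V)$, not that it vanishes, so the sphere need not bound a disk or any embedded $6$-manifold in $V$ over which the normal data could be extended. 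A mod $2$ chain bounding $x$ is not a membrane carrying a normal framing. To close this you must either show that a suitable integral lift of each element of $K$ genuinely bounds (which the divisibility obstruction blocks in general), or replace the membrane computation by the naturality argument for Kervaire's cohomologically defined $\psi$ --- which is what the paper's citation of Kervaire's Lemma 2.2 accomplishes.
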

\begin{proof}
	The assumption implies the existence of a $\mm{BPL}\langle 8\rangle$-manifold $V^{11}$ with boundary $M$. By classic surgery theory, we can find a new $V$ so that it is 4-connected. Then the lemma follows from the proof of Lemma 2.2 in \cite{Kerva1960}.
\end{proof}

By observing that $\Phi$ is additive with respect to the connected sum of manifolds, we have
\begin{corollary}\label{Phiequal}
For two $4$-connected closed $10$-dimensional $\mm{PL}$ manifolds $M$ and $M^\prime$, if $M$ is $\mm{BPL}\langle 8\rangle$-bordant to $M^\prime$, $\Phi(M)=\Phi(M^\prime)$. 
\end{corollary}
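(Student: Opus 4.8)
The plan is to deduce the corollary from the preceding lemma by passing from bordism of $M$ and $M^\prime$ to a single closed manifold that bounds, namely the connected sum $N := M \# (-M^\prime)$, where $-M^\prime$ denotes $M^\prime$ with the opposite orientation.

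First I would reinterpret the hypothesis: if $M$ is $\mm{BPL}\langle 8\rangle$-bordant to $M^\prime$, then the disjoint union $M \sqcup (-M^\prime)$ is $\mm{BPL}\langle 8\rangle$-bordant to $0$, i.e.\ it bounds a compact $\mm{BPL}\langle 8\rangle$-manifold $W^{11}$. Since the preceding lemma applies to a \emph{connected} (in fact $4$-connected) closed manifold, I next replace the disjoint union by the connected sum. Attaching a single $1$-handle joining the two components of $M \sqcup (-M^\prime)$ gives a bordism from $M \sqcup (-M^\prime)$ to $N = M \# (-M^\prime)$; because this handle is supported on a tubular neighbourhood of a $0$-sphere and $\mm{BPL}\langle 8\rangle$ is $7$-connected, the handle carries a $\mm{BPL}\langle 8\rangle$-structure extending the one on $M \sqcup (-M^\prime)$ with no obstruction. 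Glueing this bordism onto $W$ exhibits $N$ as the boundary of a $\mm{BPL}\langle 8\rangle$-manifold, so $N$ is $\mm{BPL}\langle 8\rangle$-bordant to $0$.

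I would then check that $N$ is a $4$-connected closed manifold of dimension $10$: the connected sum is formed by glueing along $S^9$, which is $8$-connected, so a van Kampen and Mayer--Vietoris argument shows $N$ is $4$-connected. The preceding lemma now yields $\Phi(N) = 0$. Finally, using that $\Phi$ is additive under connected sum, $\Phi(N) = \Phi(M) + \Phi(-M^\prime)$; since $\Phi$ takes values in $\mb{Z}_2$ and is an Arf invariant of the quadratic refinement of the intersection form on $H_5(-;\mb{Z}_2)$, it is insensitive to orientation, so $\Phi(-M^\prime) = \Phi(M^\prime)$. Therefore $\Phi(M) + \Phi(M^\prime) = 0$ in $\mb{Z}_2$, i.e.\ $\Phi(M) = \Phi(M^\prime)$.

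The step I expect to be the main obstacle is the passage from the disjoint union to the connected sum within the $\mm{BPL}\langle 8\rangle$-bordism category: one must verify that the $1$-handle bordism genuinely carries a compatible $\mm{BPL}\langle 8\rangle$-structure and that the resulting connected sum remains $4$-connected, both of which rely on the high connectivity of $\mm{BPL}\langle 8\rangle$ to kill the relevant low-dimensional obstructions.
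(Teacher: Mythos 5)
Your proposal is correct and matches the paper's (largely implicit) argument: the paper derives this corollary from the preceding lemma precisely by invoking additivity of $\Phi$ under connected sum, which is exactly the reduction you carry out via $N = M \# (-M^\prime)$. Your write-up simply makes explicit the standard details the paper leaves unstated (the $1$-handle bordism from the disjoint union to the connected sum, the $4$-connectivity of $N$, and the orientation-independence of $\Phi$), all of which are handled correctly.
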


\begin{lemma}\label{Phibor}
	A $4$-connected closed manifold $M$ of dimension $10$ represents a nontrivial bordism class in $\Omega_{10}^{\mm{PL}\langle 8\rangle}$ iff $\Phi(M)=1$.
\end{lemma}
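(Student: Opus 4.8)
The plan is to establish both directions of the equivalence by leveraging the computation $\Omega_{10}^{\mm{PL}\langle 8\rangle}=\mb{Z}_2$ from Proposition \ref{MPL8homotopy} together with the additivity and bordism-invariance results already assembled. Since $\Omega_{10}^{\mm{PL}\langle 8\rangle}\cong\pi_{10}(\mm{MPL}\langle 8\rangle)=\mb{Z}_2$ has exactly two bordism classes, the statement amounts to identifying the Kervaire invariant $\Phi$ with the unique nontrivial homomorphism $\Omega_{10}^{\mm{PL}\langle 8\rangle}\to\mb{Z}_2$.

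For the direction ``$M$ trivial $\Rightarrow\Phi(M)=0$'', I would invoke directly the preceding lemma: if $M$ is $\mm{BPL}\langle 8\rangle$-bordant to $0$, then $\Phi(M)=0$. This gives the contrapositive of the ``only if'' part, namely that $\Phi(M)=1$ forces $M$ to be nontrivial in $\Omega_{10}^{\mm{PL}\langle 8\rangle}$. The substantive content is therefore the converse: one must exhibit \emph{some} $4$-connected closed $10$-manifold $M$ with $\Phi(M)=1$, so that the nontrivial bordism class is actually realized by such a manifold with nonzero Kervaire invariant. Here I would point to the Kervaire manifold $M_0$ constructed in \cite{Kerva1960}, which is a $4$-connected closed $10$-manifold with $\Phi(M_0)=1$; since $\Phi(M_0)\ne 0$, the lemma above shows $M_0$ is not $\mm{BPL}\langle 8\rangle$-bordant to $0$, hence represents the nontrivial element of $\Omega_{10}^{\mm{PL}\langle 8\rangle}=\mb{Z}_2$.

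With $M_0$ in hand, I would argue as follows. By Corollary \ref{Phiequal}, $\Phi$ is constant on $\mm{BPL}\langle 8\rangle$-bordism classes, so $\Phi$ descends to a well-defined function on $\Omega_{10}^{\mm{PL}\langle 8\rangle}=\{0,[M_0]\}$. We have $\Phi(0)=0$ by the lemma and $\Phi([M_0])=1$ by the realization above. Every $4$-connected closed $10$-manifold $M$ lies in one of these two classes, and its class is nontrivial precisely when $M$ is bordant to $M_0$ rather than to $S^{10}$. Combining additivity of $\Phi$ under connected sum with the two computed values, $M$ is nontrivial in $\Omega_{10}^{\mm{PL}\langle 8\rangle}$ if and only if $\Phi(M)=1$, which is the asserted equivalence.

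The main obstacle I anticipate is the ``if'' direction, i.e. ensuring that a manifold $M$ with $\Phi(M)=1$ genuinely represents the nonzero class rather than being secretly null-bordant; this is exactly where one needs the preceding lemma in contrapositive form, since $\Phi(M)=1$ together with ``$M$ bordant to $0$ $\Rightarrow\Phi(M)=0$'' yields a contradiction. A secondary subtlety is confirming that $\Phi$ as a function on the two-element group is not merely well-defined but \emph{injective}, which follows only because we have independently realized the value $1$ on $M_0$; without the explicit Kervaire manifold one could not rule out that $\Phi$ vanishes identically. Thus the crux is the interplay between the bordism-invariance of $\Phi$ (Corollary \ref{Phiequal}) and the existence of a witness $M_0$ with $\Phi=1$.
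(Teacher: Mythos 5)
Your proposal is correct and follows essentially the same route as the paper: both rest on the bordism-invariance of $\Phi$ (Corollary \ref{Phiequal}), the triviality of the class of $S^{10}$ with $\Phi(S^{10})=0$, the existence of Kervaire's manifold $M_0$ with $\Phi(M_0)=1$, and the computation $\Omega_{10}^{\mm{PL}\langle 8\rangle}=\mb{Z}_2$. The paper's proof is just a more compressed version of your argument.
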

\begin{proof}
	Note that the PL sphere $S^{10}$ represents a trivial bordism class, $\Phi(S^{10})=0$. From \cite{Kerva1960}, there exists a closed 4-connected manifold $M_0$ of dimension $10$ for which $\Phi(M_0)=1$. Thus the theorem follows by Corollary \ref{Phiequal} and $\Omega_{10}^{\mm{PL}\langle 8\rangle}=\mb{Z}_2$.
\end{proof}

By Corollary \ref{bor-home} and Lemma \ref{Phibor}, we have 
\begin{theorem}
	The $B_5(M)$ and $\Phi(M)$ are a complete set of invariants of the $\mm{PL}$-homeomorphism type of the  $4$-connected closed manifold $M$ of dimension $10$.
\end{theorem}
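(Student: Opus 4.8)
The plan is to verify both halves of the phrase ``complete set of invariants'': first, that $B_5$ and $\Phi$ are genuinely invariants of the $\mm{PL}$-homeomorphism type, and second — the substantive half — that these two numbers determine that type. I would dispose of the first half quickly. The fifth Betti number $B_5$ is a homotopy invariant, hence unchanged under $\mm{PL}$-homeomorphism. For $\Phi$, I would observe that a $\mm{PL}$-homeomorphism $M_0\cong M_1$ makes $M_0$ and $M_1$ normally $\mm{BPL}\langle 8\rangle$-bordant — they carry the same (unique) $\mm{BPL}\langle 8\rangle$-structure and represent literally the same bordism class — so Corollary \ref{Phiequal} gives $\Phi(M_0)=\Phi(M_1)$ at once.

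For the main half, suppose $M_0^{10}$ and $M_1^{10}$ are $4$-connected closed manifolds with $B_5(M_0)=B_5(M_1)=2s$ and $\Phi(M_0)=\Phi(M_1)$. By Corollary \ref{bor-home} it suffices to prove that $M_0$ is normally $\mm{BPL}\langle 8\rangle$-bordant to $M_1$. At this point I would invoke the fact, recorded before Corollary \ref{bor-home}, that every such manifold carries a unique-up-to-homotopy $\mm{BPL}\langle 8\rangle$-structure; consequently the normal $\mm{BPL}\langle 8\rangle$-bordism demanded by Corollary \ref{bor-home} coincides with equality of classes in the bordism group $\Omega_{10}^{\mm{PL}\langle 8\rangle}$.

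I would then combine Lemma \ref{Phibor} with the computation $\Omega_{10}^{\mm{PL}\langle 8\rangle}=\mb{Z}_2$ from Proposition \ref{piMPL8}. Lemma \ref{Phibor} says precisely that, on $4$-connected closed $10$-manifolds, the isomorphism $\Omega_{10}^{\mm{PL}\langle 8\rangle}\cong\mb{Z}_2$ is realized by $\Phi$: one has $[M]\neq 0$ if and only if $\Phi(M)=1$. Since a class in $\mb{Z}_2$ is pinned down by this single bit, the hypothesis $\Phi(M_0)=\Phi(M_1)$ forces $[M_0]=[M_1]$, i.e. $M_0$ is $\mm{BPL}\langle 8\rangle$-bordant to $M_1$. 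Feeding this back into Corollary \ref{bor-home} (with the common value $B_5=2s$) produces the $\mm{PL}$-homeomorphism, completing the argument.

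The point worth stressing is that the genuine difficulties have already been absorbed into the earlier results: the surgery and $h$-cobordism input of Corollary \ref{bor-home} (via Theorem \ref{PLhome}) and the bordism computation $\Omega_{10}^{\mm{PL}\langle 8\rangle}=\mb{Z}_2$. For the present theorem the proof is an assembly, and the only step demanding care is the reconciliation of abstract $\mm{BPL}\langle 8\rangle$-bordism with the \emph{normal} bordism hypothesis of Corollary \ref{bor-home}; this rests entirely on the uniqueness of the $\mm{BPL}\langle 8\rangle$-structure, and I would make sure to cite that uniqueness explicitly rather than leave it implicit.
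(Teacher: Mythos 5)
Your proposal is correct and follows essentially the same route as the paper, which deduces the theorem directly from Corollary \ref{bor-home} and Lemma \ref{Phibor} together with the computation $\Omega_{10}^{\mm{PL}\langle 8\rangle}=\mb{Z}_2$. The extra care you take in checking that $\Phi$ is a $\mm{PL}$-homeomorphism invariant and in using the uniqueness of the $\mm{BPL}\langle 8\rangle$-structure to identify normal bordism with equality of bordism classes merely makes explicit what the paper leaves implicit.
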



\section{Classification for certain 13-manifolds}

In this section, let $\mc{M}$ be a PL manifold of dimension $13$ so that $H^\ast(\mc{M};\mb{Z})\cong H^\ast(\mm{CP}^3\times S^7)$.
As in \cite{ShenW}, we define the normal $B$-structure for the manifolds with {\it restriction lift } (see Definition \ref{reslift}).

By Remark \ref{PonStieclass}, denote the first Pontryagin class of $\mathcal M$ by $p_1(\mathcal{M}) =sx^2$ $s\in \mb{Z}$, 
the second Stiefel-Whitney class by  $w_2(\mathcal{M})=s x\mod{2}$. Let $\mathcal{H}$ denote the Hopf (vector) bundle over $\mathrm{CP}^4$. If $s\ge 0$, let $\xi$ be the complementary bundle of the Whitney sum $s\mathcal{H}$; if $s<0$, let $\xi$ be the Whitney sum $-s\mathcal{H}$. Formally $\xi=-s\mc{H}$.

Let   
$\pi: {B}=\mm{BPL}\langle 8 \rangle \times  \mm{CP}^4\to \mm{BPL}$ 
be the classifying map of the bundle $\gamma_8^{pl}\times \xi$ where $\gamma^{pl}_8$ is the universal bundle over $\mm{BPL}\langle 8 \rangle$, $\xi$ as a PL bundle forgets its vector bundle structure. Obviously, a manifold $\mc{M}$ with {\it restriction lift } has a normal $B$-structure i.e. its Gauss map has a lift to $B$.

All $n$-dimensional manifolds with a normal ${B}$-structure form a bordism group $\Omega_n^{\mm{PL}\langle 8 \rangle}(\xi)$ which is isomorphic to the homotopy group $\pi_n(\mm{M{B}})$ by the Pontryagin-Thom isomorphism. Here, $\mm{M{B}}=\mm{MPL}\langle 8 \rangle\wedge \mm{M}\xi $;  $\mm{M}\xi$ and $\mm{MPL}\langle 8\rangle$ are the Thom spectra of the bundles $\xi$ and $\gamma_8$. Note that if $p_1(\mc{M})=p_1(\mc{M}^\prime)$, $\mc{M}$ and $\mc{M}^\prime$ will be located in the same bordism group $\Omega_{13}^{\mm{PL}\langle 8 \rangle}(\xi)$.

By the same arguments as \cite{ShenW} and basic results of \cite{KirbSieb}, we have
 \begin{lemma}\label{hcobordism}
 Let $p_1(\mc{M})=p_1(\mc{M}^\prime)$. If $\mathcal M$ is $B-$coborant to $\mathcal M^\prime$, then there is a $\mm{PL}$ $h$-cobordism
  $(W,\mathcal M,\mathcal M^\prime).$
  \end{lemma}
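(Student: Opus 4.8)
The plan is to take the given normal $B$-bordism and improve it to an $h$-cobordism by surgery on its interior, working throughout in the $\mm{PL}$ category, which is legitimate by the basic results of \cite{KirbSieb}. Since $\mc{M}$ is $B$-bordant to $\mc{M}^\prime$, there is a compact $\mm{PL}$ manifold $W^{14}$ with $\partial W=\mc{M}\sqcup(-\mc{M}^\prime)$ carrying a lift $F\colon W\to B=\mm{BPL}\langle 8\rangle\times\mm{CP}^4$ of its Gauss map that restricts to the prescribed normal $B$-structures on the two ends. The equality $p_1(\mc{M})=p_1(\mc{M}^\prime)$ is precisely what places the two ends in the common group $\Omega_{13}^{\mm{PL}\langle 8\rangle}(\xi)$, so that speaking of a $B$-bordism makes sense and the $\mm{CP}^4$-coordinate of $F$ is consistent on $\partial W$. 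As $\dim\partial W=13\ge 5$ and $B$, $\mc{M}$, $\mc{M}^\prime$ are all simply connected, the surgery machinery applies.

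First I would perform surgery below the middle dimension to arrange $H_i(W,\mc{M})=H_i(W,\mc{M}^\prime)=0$ for $i\le 6$. Each generating class in these degrees is represented by a $\mm{PL}$-embedded sphere by \cite{RourkeSanderson}, and its normal bundle is trivial: the $\mm{BPL}\langle 8\rangle$-factor of $F$ is a string structure, so the stable normal data vanishes in this range, and by Corollary \ref{stable} the passage between the unstable and stable normal bundle is harmless for spheres of dimension at most $6$ in a $14$-manifold. The $\mm{CP}^4$-factor obstructs nothing, since below dimension $8$ its only cell beyond the basepoint is the $2$-cell carrying the class $x$, which already agrees on the two ends. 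These surgeries can therefore be carried out compatibly with $F$.

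The decisive step is the middle-dimensional surgery. After the reductions above, the surgery kernel $K=H_7(W,\mc{M})$ is finitely generated and free, with a nonsingular intersection pairing; since $\dim W=14$ and $7$ is odd this pairing is skew-symmetric, and the normal $B$-structure refines it to a $\mb{Z}_2$-valued quadratic form whose Arf invariant is the surgery obstruction in $L_{14}(\mb{Z})\cong\mb{Z}_2$ to completing the surgery rel $\partial W$. I expect this Arf--Kervaire obstruction to be the main obstacle, because $14=2^4-2$ is a Kervaire dimension and there is no formal reason for it to vanish. The way around it is that the obstruction is only well-defined modulo the surgery obstructions of closed $\mm{BPL}\langle 8\rangle$-manifolds of dimension $14$: forming the interior connected sum of $W$ with such a closed manifold $N$ alters the obstruction by $\mm{Arf}(N)$, while preserving $\partial W$ and the $B$-structure, as $N$ maps to the factor $\mm{BPL}\langle 8\rangle\times\{\ast\}$. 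Since a framed---hence string, hence $\mm{BPL}\langle 8\rangle$---manifold of Kervaire invariant one exists in dimension $14$, this connected sum realizes every element of $\mb{Z}_2$. I may therefore assume $\mm{Arf}(K)=0$; then $K$ admits a Lagrangian, the middle-dimensional handles cancel in hyperbolic pairs, and $H_7(W,\mc{M})=0$.

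With the relative homology killed through the middle dimension, Poincar\'e--Lefschetz duality $H_i(W,\mc{M})\cong H^{14-i}(W,\mc{M}^\prime)$ forces the remaining relative groups to vanish, so $\mc{M}\hookrightarrow W$ is a homology equivalence, and symmetrically for $\mc{M}^\prime$. As $W$, $\mc{M}$, $\mc{M}^\prime$ are simply connected, the Whitehead theorem promotes both inclusions to homotopy equivalences, and $(W,\mc{M},\mc{M}^\prime)$ is the desired $\mm{PL}$ $h$-cobordism.
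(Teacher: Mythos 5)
Your argument is correct in outline, and it is in fact more informative than the paper's, which offers no proof at all beyond the sentence ``by the same arguments as \cite{ShenW} and basic results of \cite{KirbSieb}'' --- i.e.\ the paper outsources the entire content to the smooth-case argument of \cite{ShenW}, transplanted to PL via \cite{KirbSieb}. What you have reconstructed is precisely that argument: surgery below the middle dimension on the $14$-dimensional $B$-bordism (unobstructed because $B=\mm{BPL}\langle 8\rangle\times \mm{CP}^4$ is $7$-connected in the first factor and $\pi_i(\mm{CP}^4)=0$ for $3\le i\le 8$, and because Corollary \ref{stable} puts the normal bundles of the surgered spheres in the stable range), followed by the middle-dimensional step where the only obstruction is the Arf invariant in $L_{14}(\mb{Z})\cong\mb{Z}_2$, killed by interior connected sum with a closed framed (hence $B$-) $14$-manifold of Kervaire invariant one, which exists since $\sigma^2=\theta_3\in\pi_{14}^s$ is detected by the Kervaire invariant. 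Two small caveats: first, since $B$ is not a Poincar\'e complex, the phrase ``surgery obstruction in $L_{14}(\mb{Z})$'' should really be read inside Kreck's modified surgery \cite{Kreck1999} (the quadratic refinement is defined on $\ker(\pi_7(W)\to\pi_7(B))$, which here is all of $\pi_7(W)$ since $\pi_7(B)=0$, and the indeterminacy of the obstruction is exactly the image of closed $B$-manifolds, which is what licenses your connected-sum trick); second, your parenthetical that $\mm{CP}^4$ has only a $2$-cell below dimension $8$ is false (it has cells in dimensions $4$ and $6$ as well), but this is harmless since the surgeries are performed on classes in the kernel of $\pi_i(W)\to\pi_i(B)$, so the map to the $\mm{CP}^4$ factor extends over the traces regardless. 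Neither point is a gap in substance; your proposal supplies the proof the paper declines to write.
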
 

Now, consider the computation of $\Omega_{13}^{\mm{PL}\langle 8 \rangle}(\xi)$.
There is an Atiyah-Hirzebruch spectral sequence (AHSS)  for $\pi_\ast(\mm{MB})$
 $$E_2^{p,q}\cong H_p(\mm{M}\xi
 ;\pi_q(\mm{MPL}\langle 8\rangle))\Longrightarrow \mm{MPL}\langle 8\rangle_\ast(\mm{M}\xi)\cong \pi_\ast(\mm{MPL}\langle 8\rangle\wedge \mm{M}\xi).$$
By Proposition \ref{piMPL8}, $E_2^{p,13-p}=0$ for $0\le p\le 13$. Hence $\Omega_{13}^{\mm{PL}\langle 8 \rangle}(\xi)=0$.

The (1) of Theorem \ref{classify13} follows from Lemma \ref{hcobordism}. Thus, any $\mc{M}$ with {\it restriction lift} can be regarded as the total space of the sphere bundle of a $8$-dimensional vector bundle over $\mm{CP}^3$ (see Remark \ref{bundle}). The homotopy classification of the manifolds transforms to the fibre homotopy classification of $S^7$-bundles over $\mm{CP}^3$. Subsequently, the (2) of Theorem \ref{classify13} follows from the lemma 5.2 of \cite{ShenW}.


\begin{thebibliography}{100}
\bibitem{Adams1966} {Adams}, J. F. On the groups J(x), IV. Topology 5 (1966), 21-71.

\bibitem{Barden} Barden, D.
Simply connected five-manifolds.
Ann. of Math. (2) 82 (1965), 365-385.

\bibitem{BJM} Barratt, M. G.; Jones, J. D. S.; Mahowald, M. E.
Relations amongst Toda brackets and the Kervaire invariant in dimension 62.
J. London Math. Soc. (2) 30 (1984), no. 3, 533-550.


\bibitem{BLP1966} Browder, W.; Liulevicius, A.; Peterson, F. P. Cobordism theories. Ann. of Math. (2) 84 (1966), 91-101.

\bibitem{BrowderW1969} Browder, W.
The Kervaire invariant of framed manifolds and its generalization.
Ann. of Math. (2) 90 (1969), 157-186.

\bibitem{BrownPeter} Brown, E. H., Jr.; Peterson, F. P.
Relations among characteristic classes. I.
Topology 3 (1964), no.suppl, 39-52.

\bibitem{BrownPeter1966} Brown, E. H., Jr.; Peterson, F. P.
The Kervaire invariant of $(8k+2)$-manifolds.
Amer. J. Math. 88 (1966), 815-826.

\bibitem{BMM1971} Brumfiel, G.; Madsen, I.; Milgram, R. J.
PL characteristic classes and cobordism.
Bull. Amer. Math. Soc. 77 (1971), 1025-1030.

\bibitem{BMM1973} Brumfiel, G.; Madsen, I.; Milgram, R. J.
PL characteristic classes and cobordism.
Ann. of Math. (2) 97 (1973), 82-159.

\bibitem{FangShen} Fang, F.; Shen, W. Topological classification of Bazaikin spaces. appearing. https://doi.org/10.48550/arXiv.2307.13303.

\bibitem{Freed} Freedman, M. Uniqueness theorems for taut submanifolds.
Pacific J. Math. 62 (1976), no. 2, 379-387.

\bibitem{Giambalvo} {Giambalvo}, V. On $\left \langle 8 \right \rangle$-cobordism. Illinois J. Math. 15 (1971), 533-541.

\bibitem{HHR2016} Hill, M. A.; Hopkins, M. J.; Ravenel, D. C.
On the nonexistence of elements of Kervaire invariant one.
Ann. of Math. (2) 184 (2016), no. 1, 1-262.

\bibitem{HoRa1995} {Hovey}, M.A.; Ravenel, D.C.
The 7-connected cobordism ring at p=3.
Trans. Amer. Math. Soc. 347 (1995), no. 9, 3473-3502.

\bibitem{Jones} Jones, J. D. S.
The Kervaire invariant of extended power manifolds.
Topology 17 (1978), no. 3, 249-266.

\bibitem{Jupp} Jupp, P. E.
Classification of certain $6$-manifolds.
Proc. Cambridge Philos. Soc. 73 (1973), 293-300.

\bibitem{Kerva1960} Kervaire, M. A.
A manifold which does not admit any differentiable structure.
Comment. Math. Helv. 34 (1960), 257-270.

\bibitem{KervaMilnor} {Kervaire}, M. A.; Milnor, J. W. Groups of homotopy spheres. I. Ann. of Math. (2) 77 (1963), 504-537.

\bibitem{KirbSieb} {Kirby}, R. C.; Siebenmann, L. C. Foundational Essays on Topological Manifolds, Smoothings and Triangulations. Ann. of Math. Studies 88, Princeton Univ. Press, Princeton, NJ, 1977.


\bibitem{Kreck1999} {Kreck}, M. Surgery and duality. Ann. of Math. (2) 149 (1999), no. 3, 707-754.

\bibitem{KS1} {Kreck}, M.; Stolz, S. A diffeomorphism classification of 7-dimensional homogeneous Einstein manifolds with $SU(3)\times SU(2)\times U(1)$-symmetry.
Ann. of Math. (2) 127 (1988), no. 2, 373-388.

\bibitem{KS} {Kreck}, M.; Stolz, S. Some nondiffeomorphic homeomorphic homogeneous 7-manifolds with positive sectional curvature. J. Differential Geom. 33 (1991), no. 2, 465-486.

\bibitem{KreckSu} Kreck, M.; Su, Y. On 5-manifolds with free fundamental group and simple boundary links in $S^5$.
Geom. Topol. 21 (2017), no. 5, 2989-3008.

\bibitem{Kruggel} Kruggel, B.
A homotopy classification of certain 7-manifolds. Trans. Amer. Math. Soc. 349 (1997), no. 7, 2827-2843.

\bibitem{MaTa} Mahowald, M.; Tangora, M.
Some differentials in the Adams spectral sequence.
Topology 6 (1967), 349-369.

\bibitem{May} May, J. P.
The cohomology of restricted Lie algebras and of Hopf algebras.
Bull. Amer. Math. Soc. 71 (1965), 372-377.

\bibitem{Milg} Milgram, R. J.
Group representations and the Adams spectral sequence.
Pacific J. Math. 41 (1972), 157-182.

\bibitem{Milnor1956} Milnor, J. W.
On manifolds homeomorphic to the 7-sphere.
Ann. of Math. (2) 64 (1956), 399-405.

\bibitem{Milnor1964} Milnor, J. W.
Microbundles. I. Topology 3 (1964), no.suppl, 53-80.



\bibitem{Pen1982} Pengelley, D. J.
The mod two homology of MSO and MSU as A comodule algebras, and the cobordism ring.
 J. London Math. Soc. (2) 25 (1982), no.3, 467-472.

\bibitem{Quillen} {Quillen}, D. G. The Adams conjecture. Topology 10 (1971), 1-10.

\bibitem{Ravenel1986} {Ravenel}, D. C. Complex cobordism and stable homotopy groups of spheres. Pure and Applied Mathematics, 121. Academic Press, Inc., Orlando, FL, 1986.

\bibitem{RourkeSanderson}  {Rourke}, C. P.; Sanderson, B. J. Introduction to piecewise-linear topology. Reprint Springer Study Edition Springer-Verlag, Berlin-New York, 1982.

\bibitem{Rudyak} {Rudyak}, Y. B. On Thom Spectra, Orientability, and Cobordism, Springer Mongraphs in Mathematics,Corrected 2nd printing, Springer, 2008.


\bibitem{ShenW} Shen, W. On the topology and geometry  of certain 13-manifolds. arXiv: 2406.15697.

 \bibitem{Stong1963} {Stong}, R. E. Determination of $H^\ast(BO(k,\cdots , \infty),\mathbb{Z}_2)$ and $H^\ast(BU(k,\cdots ,$ $\infty),\mathbb{Z}_2)$. Trans. Amer. Math. Soc. 107 (1963), 526-544.


\bibitem{Toda} Toda, H. Composition Methods in Homotopy Groups of Spheres, Ann. of
Math. Studies 49, 1962.

\bibitem{Wall1962} Wall, C. T. C.
Classification of $(n-1)$-connected $2n$-manifolds. Ann. of Math. (2) 75 (1962), 163-189.

\bibitem{Wall1964} Wall, C. T. C. Cobordism of combinatorial n-manifolds for $n \le  8$, Proc. Cambridge Philos. Soc., 60 (1964), 807-812.

\bibitem{Wall1966} Wall, C. T. C.
Classification problems in differential topology. V. On certain $6$-manifolds.
Invent. Math. 1 (1966), 355-374.

\bibitem{Wall1999} {Wall}, C. T. C. Surgery on Compact Manifolds, London Math. Soc. Monographs 1, Academic Press, New York, 1970.

\bibitem{Will1966} Williamson, R. E., Jr.
Cobordism of combinatorial manifolds.
Ann. of Math. (2) 83 (1966), 1-33.
\end{thebibliography}
\end{document}